\pgfplotsset{compat=1.15}
\newcommand{\mb}[1]{\mathbf{#1}}
\newcommand{\mbb}[1]{\mathbb{#1}}
\newcommand{\mc}[1]{\mathcal{#1}}
\def\T{\operatorname{T}}
\def\R{\mathbb{R}}
\def\N{\mathbb{N}}
\def\prox{\operatorname{prox}}
\def\bx{\mathbf{x}}
\def\bz{\mathbf{z}}
\def\by{\mathbf{y}}
\def\BibTeX{{\rm B\kern-.05em{\sc i\kern-.025em b}\kern-.08em
		T\kern-.1667em\lower.7ex\hbox{E}\kern-.125emX}}
\newtheorem{lemma}{Lemma}
\newtheorem{theorem}{Theorem}
\newtheorem{definition}{Definition}
\newtheorem{rem}{Remark}
\newtheorem{assumption}{Assumption}
\newtheorem{proposition}{Proposition}
\begin{document}
	
	\title{\LARGE\bf Asynchronous Distributed Optimization with Delay-free Parameters}
	\author{Xuyang Wu, Changxin Liu, Sindri Magn\'{u}sson, and Mikael Johansson\thanks{X. Wu, C. Liu, and M. Johansson are with the Division of Decision and Control Systems, School of EECS, KTH Royal Institute of Technology, SE-100 44 Stockholm, Sweden. Email: {\tt {\{xuyangw,changxin,mikaelj\}@kth.se}.}}
		\thanks{S. Magn\'{u}sson is with the Department of Computer and System Science, Stockholm University, SE-164 07 Stockholm, Sweden. Email: {\tt sindri.magnusson@dsv.su.se}.} \thanks{This work was supported by WASP and the Swedish Research Council (Vetenskapsr\r{a}det) under grants 2019-05319 and 2020-03607. }}
	\maketitle
	\begin{abstract}
		Existing asynchronous distributed optimization algorithms often use diminishing step-sizes that cause slow practical convergence, or use fixed step-sizes that depend on and decrease with an upper bound of the delays. Not only are such delay bounds hard to obtain in advance, but they also tend to be large and rarely attained, resulting in unnecessarily slow convergence. This paper develops asynchronous versions of two distributed algorithms, Prox-DGD and DGD-ATC, for solving consensus optimization problems over undirected networks. In contrast to alternatives, our algorithms can converge to the fixed point set of their synchronous counterparts using step-sizes that are independent of the delays. We establish convergence guarantees for convex and strongly convex problems under both partial and total asynchrony. We also show that the convergence speed of the two asynchronous methods adjusts to the actual level of asynchrony rather than being constrained by the worst-case. Numerical experiments demonstrate a strong practical performance of our asynchronous algorithms.
		
	\end{abstract}


	\section{Introduction}\label{sec:intro}

	Distributed optimization has attracted much attention in the last decade and has found applications in diverse areas such as cooperative control, machine learning, and power systems. 
	The literature on distributed optimization has primarily focused on synchronous methods that iterate in a serialized manner, proceeding to the next iteration only after the current one is completed. Synchronous methods require all nodes to maintain a consistent view of the optimization variables without any information delay. This makes the algorithms easier to analyze, but more difficult to realize. Global synchronization through a network can be challenging in practice. Additionally, synchronized updates are inefficient and unreliable since the time taken per iteration is determined by the slowest node and the optimization process is vulnerable to single-node failures. 

	Asynchronous distributed methods that do not require synchronization between nodes are often better suited for practical implementation \cite{assran2020advances,huba2022papaya}. However, asynchronous methods must account for information delays and allow for nodes to work with inconsistent views of the optimization variables, which makes them difficult to analyze. Despite the inherent challenges, there have been notable successes in studying the mathematical properties of asynchronous optimization algorithms. One focus area has been the asynchronous consensus optimization algorithms \cite{nedic2010convergence,mishchenko2018delay,zhang2019asyspa,doan2017convergence,kungurtsev2023decentralized,assran2020asynchronous,spiridonoff2020robust,zhang2019fully,wu2017decentralized,tian2020achieving,bianchi2021distributed}. These include asynchronous variants of well-established algorithms such as decentralized gradient descent (DGD), PG-EXTRA, and gradient-tracking-based methods. Asynchronous distributed algorithms developed for other settings encompass Asy-FLEXA \cite{cannelli2020asynchronous}, the asynchronous primal-dual algorithm \cite{latafat2022primal}, the asynchronous dual decomposition \cite{su2022convergence}, the asynchronous coordinate descent method \cite{ubl2021totally, Wu23ICML}, and ADGD \cite{wang2021asynchronous}. 

	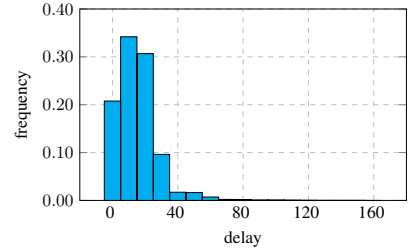
\begin{figure}
		\centering
		\begin{tikzpicture}[scale=0.66]
		\begin{axis}[
		ybar,
		xlabel={delay},
		ylabel={frequency},
		width=0.45\textwidth,
		height=0.3\textwidth,
		bar width=10,
		xmin = -20,
		xmax = 180,
		ymin=0,
		ymax=0.4,
		grid=major,
		grid style=dashed,
		xtick distance=40,
		xticklabel style={rotate=0, anchor=north},
		xticklabels={0, 0, 40, 80, 120, 160},
		yticklabel style={
			/pgf/number format/.cd,
			fixed,
			fixed zerofill,
			precision=2,  
		},
		xtick align=inside,  
		legend style={
			at={(0.5,-0.2)},
			anchor=north,
			legend columns=-1,
		},
		xticklabel style={
			align=left,  
			yshift= 0ex,   
			xshift=-0.3ex,
		},
		xtick style={draw=none},
		]
		\addplot[fill=cyan] coordinates {
			(0, 0.2076)
			(10, 0.3419)
			(20, 0.3068)
			(30, 0.0962)
			(40, 0.0171)
			(50, 0.0164)
			(60, 0.0071)
			(70, 0.0021)
			(80, 0.0018)
			(90, 0.0009)
			(100, 0.0009)
			(110, 0.0006)
			(120, 0.0003)
			(130, 0.0001)
			(140, 0.0001)
			(150, 0.0001)
			(160, 0)
        };
		\end{axis}
		\end{tikzpicture}
		\caption{distribution of real delays (30 nodes).}
		\label{fig:histogram}
	\end{figure}

    Most of the studies cited above use diminishing step-sizes~\cite{zhang2019asyspa,doan2017convergence,kungurtsev2023decentralized,assran2020asynchronous,spiridonoff2020robust} or fixed step-sizes that depend on information delays in the  system~\cite{wu2017decentralized, tian2020achieving, cannelli2020asynchronous,su2022convergence, latafat2022primal, zhang2019fully,bianchi2021distributed}. Diminishing step-sizes are effective in stochastic optimization, but can result in unnecessarily slow convergence on deterministic problems. At the same time, asynchronous algorithms that use fixed step-sizes often determine step-sizes based on an upper bound $\tau$ on the maximal delay and use step-size selection policies that decrease with $\tau$, either as $O(\frac{1}{\tau+1})$ \cite{wu2017decentralized} or as $O(\frac{1}{(\tau+1)^2})$ \cite{latafat2022primal}. Since the worst-case delay is hard to estimate before actually running the algorithm, it is difficult to use these methods in a theoretically justified way. Moreover, even when the maximal information delay is known, these methods tend to converge slowly since worst-case delays are often (very) large, while the majority of delays are small.
To demonstrate this dilemma, we have conducted real experiments depicted in Fig. \ref{fig:histogram}\footnote{Asynchronous implementation of DGD \cite{yuan2016convergence} on a multi-thread computer using MPI4py \cite{dalcin2008mpi} for distributed training of a classifier. Each local loss function is a logistic function defined by $2000$ samples in MNIST \cite{lecun1998gradient}, all threads are homogeneous and each serves as a node, and the communication network is a line graph. We use a global iteration number to index the iterates, which is increased by one whenever a node updates its local iterate. The delays are defined as the difference between the current iteration index and the indexes of the iterates used in the update. All delays are generated by real interactions between threads rather than any theoretical models.}, 
which plots the delay distribution over $1000$ iterations in a 30-node network. In this experiment, the maximum delay was $160$, while over $95\%$ delays were smaller than $40$.
Similar phenomena were observed in \cite{mishchenko2018delay,Wu23ICML}. For example, \cite{mishchenko2018delay} implemented asynchronous SGD on a 40-core CPU and reported maximum and average delays of $1200$ and $40$, respectively. {\color{black}A few studies \cite{Sun17,wu2017decentralized} consider stochastic delays and let the step-sizes rely on parameters of the delay distribution rather than the worst-case delay. However, they both assume that the system delays are independent across iterations,  which is less practical}. 

 The papers  \cite{nedic2010convergence,ubl2021totally,wang2021asynchronous,mishchenko2018delay,Wu23ICML} show that some asynchronous distributed algorithms can converge with fixed step-sizes that can be determined without any information about the delays. Compared with the delay-dependent ones in \cite{wu2017decentralized, tian2020achieving, cannelli2020asynchronous,su2022convergence,latafat2022primal, zhang2019fully,bianchi2021distributed}, such delay-free step-sizes can be safely determined prior to deployment. Moreover, since they do not guard against the worst-case delay, their convergence tends to be governed by the actual experienced delays and not be hampered by large and rare worst-case delays. For example, consider an extreme scenario where the algorithm experiences a large delay $\tau$ in a single iteration, while all remaining delays are $0$. In such a case, delay-dependent step-sizes will be reduced to guard against the worst case, and the algorithm will converge slower also in iterations where there are no delays. Under delay-free step-sizes, on the other hand, the algorithm will only have slow progress in iterations with large delays. However, existing research on delay-free step-sizes  only addresses quadratic problems \cite{nedic2010convergence,ubl2021totally} or star networks \cite{wang2021asynchronous,mishchenko2018delay,Wu23ICML}.

	This paper studies asynchronous variants of two popular distributed algorithms for consensus-optimization over undirected networks, proximal decentralized gradient descent (Prox-DGD) \cite{zeng2018nonconvex} and DGD using the adapt-then-combine technique (DGD-ATC) \cite{pu2020asymptotic}. Our contributions include:
	\begin{enumerate}
		\item We bound the optimality gap between the fixed-points of the two synchronous methods under fixed step-sizes and the optimum of the consensus optimization problem. Our results on Prox-DGD are more comprehensive than the current literature, and those on DGD-ATC were absent.
		\item We show theoretically that, i) with strongly convex objective functions under total asynchrony (Assumption \ref{asm:totalasynchrony}), and ii) with convex objective functions and under partial asynchrony (Assumption \ref{asm:partialasynchrony}), the proposed methods can use \emph{fixed step-sizes that are set without any information about the delays}, and still converge to fixed-points of their synchronous counterparts. 
		\item By assuming both partial asynchrony and strongly convex objective functions, we improve the convergence guarantee from asymptotic to linear and show that the convergence {\color{black}{adjusts}} to the actual level of asynchrony ({\color{black}the update frequency of each node and the delays}), {\color{black} and are not penalized by large worst-case delays that occur rarely.}
          \item Our algorithms can handle non-quadratic objective functions and general undirected networks, while existing methods with fixed delay-free step-sizes~\cite{nedic2010convergence,ubl2021totally,mishchenko2018delay} require quadratic objective functions or star networks.
          \item Our algorithms are guaranteed to converge under total asynchrony while those using delay-dependent fixed step-sizes \cite{wu2017decentralized, zhang2019fully,tian2020achieving,bianchi2021distributed} or diminishing step-sizes \cite{zhang2019asyspa,assran2020asynchronous,doan2017convergence,kungurtsev2023decentralized,spiridonoff2020robust} are not.

	\end{enumerate}
Many algorithms that use delay-dependent fixed step-sizes \cite{wu2017decentralized, zhang2019fully,tian2020achieving,bianchi2021distributed} or diminishing step-sizes \cite{zhang2019asyspa,assran2020asynchronous,doan2017convergence,kungurtsev2023decentralized} can converge to the optimal consensus solution. Our algorithms, on the other hand, suffer from an unfavourable inexact convergence inherited from their synchronous counterparts. However, they admit delay-free step-sizes that are easy to tune and may yield faster practical convergence as shown by our numerical experiments.


 A preliminary conference version can be found in \cite{wu23CDC}. It studies the asynchronous DGD and DGD-ATC for solving strongly convex and smooth problems, does not show any convergence results in terms of the actual asynchrony level, and includes no proofs. Compared to \cite{wu23CDC}, this article extends the asynchronous DGD to the asynchronous Prox-DGD which allows for composite objective functions, derives convergence results of the two asynchronous methods for both strongly convex and general convex functions, proves that the convergence rates adjusts to the actual level of asynchrony rather than being determined by the worst-case, proposes an adaptive step-size for the asynchronous Prox-DGD, and includes all the proofs.

	\subsection*{Paper Organization and Notation}
	
	The outline of this paper is as follows: Section II formulates the consensus optimization problem, revisits the synchronous algorithm Prox-DGD for solving the problem with composite objective functions, and establishes its optimality error bound. Section III introduces the asynchronous Prox-DGD and analyses its convergence. Section IV studies the DGD-ATC variant of DGD for smooth problems, introduces its asynchronous counterpart, and proves similar convergence results as for the asynchronous Prox-DGD. Finally, Section V evaluates the practical performance of the asynchronous algorithms using numerical experiments and Section VI concludes the paper.
	
	We use $\mb{1}_d$, $\mathbf{0}_{d\times d}$, and $I_d$ to denote the $d$-dimensional all-one vector, the $d\times d$ all-zero matrix, and the $d\times d$ identity matrix, respectively. Subscripts are omitted when it is clear from context. The notation $\otimes$ represents the Kronecker product and $\N_0$ is the set of natural numbers including $0$. For any symmetric matrix $W\in\mbb{R}^{n\times n}$, $\lambda_i(W)$, $1\le i\le n$ denotes the $i$th largest eigenvalue of $W$, $\operatorname{Null}(W)$ is its null space, and $W\succ \mathbf{0}$ and $W\succeq \mb{0}$ mean that $W$ is positive definite and positive semi-definite, respectively. For any vector $x\in\mathbb{R}^n$, we use $\|x\|$ to represent the $\ell_2$ norm and define $\|x\|_W=\sqrt{x^TWx}$ for any positive definite matrix $W\in\mathbb{R}^{n\times n}$. For any function $f:\mathbb{R}^d\rightarrow\mathbb{R}\cup\{+\infty\}$, we define
 \begin{equation*}
     \operatorname{prox}_f(x) = \operatorname{\arg\;\min}_y f(y)+\frac{1}{2}\|y-x\|^2.
 \end{equation*}
 For any differentiable function $f:\mathbb{R}^d\rightarrow\mathbb{R}$, we say that {\color{black}it is $M$-Lipschitz continuous for some $M>0$ if 
 \[\|f(y)-f(x)\|\le M\|y-x\|,~\forall x,y\in\mathbb{R}^d,\]
 }
 it is $L$-smooth for some $L>0$ if 
	\begin{equation*}
	\|\nabla f(y)-\nabla f(x)\|\le L\|y-x\|,~\forall x,y\in\mathbb{R}^d,
	\end{equation*}
	it is $\mu$-strongly convex for some $\mu>0$ if \begin{equation*}
	\langle \nabla f(y)-\nabla f(x), y-x\rangle\ge \mu\|y-x\|^2,~\forall x,y\in\mathbb{R}^d,
	\end{equation*}
{\color{black}     and it is coercive if $\lim_{\|x\|\rightarrow +\infty} f(x) = +\infty$. When $f$ is twice continuously differentiable, the $M$-Lipschitz continuity, $L$-smoothness, and the $\mu$-strong convexity properties are equivalent to that, for any $z$, $\|\nabla f(z)\|\le M$, $\nabla^2 f(z)\preceq L I$, and $\nabla^2 f(z)\succeq \mu I$, respectively.}
	
\section{Problem Formulation and Prox-DGD}

This section formulates the consensus optimization problem, revisits the synchronous proximal distributed gradient descent (Prox-DGD) \cite{zeng2018nonconvex} for solving it, and derives bounds on the optimality error of the fixed points of Prox-DGD.

\subsection{Consensus Optimization}

{\color{black}We consider consensus optimization over an undirected network $\mc{G}=(\mc{V}, \mc{E})$, where $\mc{V}=\{1,\ldots,n\}$ is the vertex set and $\mc{E}\subseteq \mc{V}\times \mc{V}$ is the edge set that excludes self-loops. We impose the following assumption on the network $\mc{G}$.
\begin{assumption}
    The network $\mc{G}$ is undirected and connected, and each node has a self-loop.
\end{assumption}}
In the network, each node $i$ observes local cost functions $f_i:\mathbb{R}^d\rightarrow \mathbb{R}$ and $h_i:\mathbb{R}^d\rightarrow \mathbb{R}\cup\{+\infty\}$, and can only interact with its neighbors in $\mc{N}_i=\{j: \{i,j\}\in\mc{E}\}$. Consensus optimization aims to find a common decision that minimizes
\begin{equation}\label{eq:consensusprob}
    \begin{split}
        \underset{x\in\mathbb{R}^d}{\operatorname{minimize}}~&\sum_{i\in\mc{V}} f_i(x)+h_i(x),
    \end{split}
\end{equation}
where popular choices of $h_i$ include $\ell_1$-norm and indicator function of a convex set. Problem \eqref{eq:consensusprob} describes many engineering problems in control and machine learning for networked systems, including distributed model predictive control \cite{giselsson2013accelerated}, 
empirical risk minimization \cite{Vapnik}, etc.

Distributed algorithms for solving problem \eqref{eq:consensusprob} include the distributed subgradient method \cite{nedic2009distributed}, Prox-DGD \cite{zeng2018nonconvex}, distributed gradient-tracking-based algorithm \cite{nedic17}, distributed dual averaging\cite{liu2022decentralized}, and PG-EXTRA \cite{shi2015proximal}. While these algorithms were originally designed to be executed synchronously, they have been extended to allow for asynchronous implementations. However, existing asynchronous methods for solving \eqref{eq:consensusprob} often suffer from slow convergence due to the use of either diminishing step-sizes, or small fixed step-sizes that depend on a (usually unknown and large) upper bound on all delays.

	\subsection{Synchronous Prox-DGD}
	
    To present the algorithm in a compact form, let $x_i\in\mbb{R}^d$ be a local copy of $x$ in \eqref{eq:consensusprob} held by node $i\in\mc{V}$, and define
	\begin{align*}
	&\mathbf{x}=(x_1^T, \ldots, x_n^T)^T,~~f(\bx) = \sum_{i\in\mc{V}} f_i(x_i),\\
	&h(\bx) = \sum_{i\in\mc{V}} h_i(x_i),~~F(\bx) =f(\bx)+h(\bx).
	\end{align*}
	{\color{black} Let
	$\mb{W}=W\otimes I_d$ where
	$W\in\mathbb{R}^{n\times n}$ is an averaging matrix associated with $\mc{G}$:
    \begin{definition}[averaging matrix]\label{def:def_av}
        A matrix $W\in\mathbb{R}^{n\times n}$ is called an averaging matrix associated with $\mc{G}$ if
        \begin{enumerate}[1)]
            \item $W$ is symmetric and doubly stochastic, i.e., it is non-negative, and each row and each column sums to $1$;
            \item For any $i,j\in\mc{V}$, $w_{ij}>0$ if $\{i,j\}\in\mc{E}$ or $i=j$, and $w_{ij}=0$ otherwise.
        \end{enumerate}
    \end{definition}
    This matrix can be easily formed in a distributed manner, with many options available, as outlined in \cite[Section 2.4]{shi2015extra}.} We use $k\in\mathbb{N}_0$ as iteration index and $\mathbf{x}^k$ as the value of $\bx$ at iteration $k$. Then, the Prox-DGD algorithm progresses according to the following iteration: 
	\begin{equation}\label{eq:DGD}
	\mathbf{x}^{k+1}= \prox_{\alpha h}(\mb{W}\bx^k - \alpha \nabla f(\bx^k)),
	\end{equation}
	where the parameter $\alpha>0$ is the step-size.
	
	Prox-DGD can be seen as the proximal gradient descent method for solving the following penalized problem of \eqref{eq:consensusprob}:
	\begin{equation}\label{eq:penalprob}
	\underset{\bx\in\mathbb{R}^{nd}}{\operatorname{minimize}}~f(\bx)+h(\bx)+\frac{1}{2\alpha}\bx^T(I-\mathbf{W})\bx.
	\end{equation}
	Every fixed point of Prox-DGD is an optimum to \eqref{eq:penalprob} and vice versa. As a result, when studying  Prox-DGD, it is natural to assume that the optimal solution set of \eqref{eq:penalprob} is non-empty.
	\begin{assumption}\label{asm:optsolexist}
		For any $\alpha>0$, the minimizer of $F(\bx)+\frac{1}{2\alpha}\bx^T(I-\mathbf{W})\bx$ exists.
	\end{assumption}
	When $F$ is proper, lower semicontinuous, and convex, Assumption \ref{asm:optsolexist} holds if $F$ is coercive \cite[Proposition 11.15]{bauschke2011convex}, i.e., $\lim_{\|\bx\|\rightarrow+\infty} F(\bx)=+\infty$, which holds when $F$ is strongly convex or has a bounded domain. However, even if $F$ is not coercive, the minimizer may still exist for many practical problems. For example, if each $f_i(x_i)=\|A_ix_i-b_i\|^2$ and $h_i\equiv 0$, then even if $A_i$ does not have full row rank so that $F$ is not coercive, the minimizer of $F(\bx)+\frac{1}{2\alpha}\bx^T(I-\mathbf{W})\bx$ still exists (proof in Appendix \ref{append:qpoptsolexist}).

\subsection{Optimality of the Fixed Points of Prox-DGD}
The fixed points of Prox-DGD, or equivalently, the optimal solutions of problem \eqref{eq:penalprob}, are generally not optimal to \eqref{eq:consensusprob}. This subsection bounds the error of the fixed points of Prox-DGD relative to the optimizers of \eqref{eq:consensusprob} under proper assumptions.



\begin{assumption}\label{asm:prob}
    Each $f_i$ and $h_i$ are proper, closed, and convex, and each $f_i$ is $L_i$-smooth for some $L_i>0$. Moreover, each $f_i+h_i$ is bounded from the below and problem \eqref{eq:consensusprob} has a non-empty optimal solution set. 
\end{assumption}
{\color{black}The smoothness condition in Assumption \ref{asm:prob} is standard for convergence analysis of optimization methods \cite{mishchenko2018delay,tian2020achieving,wu2017decentralized,Li23}.} Under Assumption \ref{asm:prob}, we introduce our first optimality gap result. To this end, let $F_{\operatorname{opt}}$ be the optimal value of \eqref{eq:consensusprob}.
\begin{lemma}\label{lemma:DGDoptimalitygap}
    Suppose that Assumptions \ref{asm:optsolexist}--\ref{asm:prob} hold. For any fixed point $\bx^\star$ of Prox-DGD, it holds that $F(\bx^\star)\le F_{\operatorname{opt}}$ and
    \begin{equation}\label{eq:errorDGD}
		\|x_i^\star - \bar{x}^\star\|\le O\left(\sqrt{\frac{\alpha}{1-\beta}}\right),\quad\forall i\in\mc{V},
    \end{equation}
    where $\bar{x}^\star=\frac{1}{n}\sum_{i\in\mc{V}} x_i^\star$ and $\beta=\max\{|\lambda_2(W)|, |\lambda_n(W)|\}\in [0,1)$.
\end{lemma}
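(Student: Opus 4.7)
The crux of the plan is to exploit the fact, already noted in the paragraph preceding Assumption~\ref{asm:optsolexist}, that every fixed point $\bx^\star$ of Prox-DGD is a minimizer of the penalized objective $G(\bx):=F(\bx)+\frac{1}{2\alpha}\bx^T(I-\mb{W})\bx$. Under Assumption~\ref{asm:prob}, $F$ is convex and $I-\mb{W}$ is PSD, so $G$ is convex and the first-order condition supplied by the prox step makes $\bx^\star$ a global minimizer of $G$. I would then test this minimizer against the stacked consensus optimum $\tilde{\bx}:=\mb{1}_n\otimes x^\star_{\operatorname{opt}}$, where $x^\star_{\operatorname{opt}}$ is any optimum of~\eqref{eq:consensusprob}. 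Since $W$ is stochastic, $(I-\mb{W})\tilde{\bx}=0$, yielding the master inequality
\begin{equation*}
F(\bx^\star) + \frac{1}{2\alpha}(\bx^\star)^T(I-\mb{W})\bx^\star \le F(\tilde{\bx}) = F_{\operatorname{opt}}.
\end{equation*}

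Both claims of the lemma drop out of this single inequality. Because $I-\mb{W}$ is PSD, the quadratic term is non-negative and can be discarded, giving $F(\bx^\star)\le F_{\operatorname{opt}}$ immediately. For the consensus bound, I would rearrange the master inequality and invoke the lower-boundedness of each $f_i+h_i$ from Assumption~\ref{asm:prob} to obtain
\begin{equation*}
(\bx^\star)^T(I-\mb{W})\bx^\star \le 2\alpha\bigl(F_{\operatorname{opt}}-F_{\operatorname{lb}}\bigr)
\end{equation*}
for a constant $F_{\operatorname{lb}}:=\sum_i\inf(f_i+h_i)$ independent of $\alpha$ and of the network.

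To convert this quadratic form into a consensus error, I would use the spectral structure of $I-\mb{W}$. The hypothesis $\beta<1$ ensures $\operatorname{Null}(I-\mb{W})$ is exactly the consensus subspace $\{\mb{1}_n\otimes v : v\in\mbb{R}^d\}$, onto which the orthogonal projection of $\bx^\star$ is $\mb{1}_n\otimes\bar{x}^\star$. On the orthogonal complement, the smallest eigenvalue of $I-\mb{W}$ equals $1-\lambda_2(W)\ge 1-\beta$, so a Rayleigh-quotient estimate gives
\begin{equation*}
(\bx^\star)^T(I-\mb{W})\bx^\star \ge (1-\beta)\,\|\bx^\star-\mb{1}_n\otimes\bar{x}^\star\|^2.
\end{equation*}
Combining this with the previous display and using $\|x_i^\star-\bar{x}^\star\|\le\|\bx^\star-\mb{1}_n\otimes\bar{x}^\star\|$ produces~\eqref{eq:errorDGD}.

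The obstacle here is conceptual rather than technical: one must recognize that a single comparison of $\bx^\star$ against $\tilde{\bx}$ simultaneously delivers both the function-value inequality and the consensus-error bound. The remaining work is routine linear algebra, whose only delicate point is the spectral identification of $\operatorname{Null}(I-\mb{W})$ with the consensus subspace, which is precisely what the assumption $\beta<1$ provides through the Kronecker product structure.
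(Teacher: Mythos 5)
Your proposal is correct and follows essentially the same route as the paper: compare the penalized objective at $\bx^\star$ against the stacked consensus optimum to get $F(\bx^\star)+\frac{1}{2\alpha}(\bx^\star)^T(I-\mb{W})\bx^\star\le F_{\operatorname{opt}}$, bound the quadratic form using the lower-boundedness of $F$ from Assumption~\ref{asm:prob}, and convert it into a consensus error via the spectral gap $1-\beta$ of $I-\mb{W}$ on the orthogonal complement of the consensus subspace. The only cosmetic difference is that the paper writes the lower bound as $\min F(\bx)$ where you write $\sum_i\inf(f_i+h_i)$, which is the same quantity.
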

\begin{proof}
    See Appendix \ref{append:gapDGD}.
\end{proof}
The above error bound can be improved under additional assumptions. We first consider the general case where the nonsmooth functions may be different.
\begin{assumption}[non-identical $h_i$'s]\label{asm:strongercond}
    One of the following statements is true:
    \begin{enumerate}[i)]
	\item each $f_i+h_i$ is Lipschitz continuous;
	\item each $\operatorname{dom} h_i=\mathbb{R}^d$ and $f_i+h_i$ is coercive.
    \end{enumerate}
\end{assumption}
If all the non-smooth functions are identical, then even if Assumption \ref{asm:strongercond} fails to hold, we are still able to improve the bound in Lemma \ref{lemma:DGDoptimalitygap} under the following assumption.

\begin{assumption}[identical $h_i$'s]\label{asm:identicalstrongcond}
    All the $h_i$'s are identical, and one of the following holds:
    \begin{enumerate}[i)]
	\item each $f_i$ and $h_i$ are bounded from the below;
	\item each $f_i$ is Lipschitz continuous;
	\item each $f_i+h_i$ is coercive\footnote{}.
    \end{enumerate}
\end{assumption}
{\color{black}These three conditions are standard in convergence analysis and hold for many problems. For example, condition i) holds in typical machine learning setting where each $f_i$ is a non-negative loss function such as quadratic loss or logistic loss and each $h_i$ is a non-negative regularizer; Condition ii) is common for convergence analysis of subgradient-type methods \cite{nedic2009distributed}; Condition iii) holds for many choices of $h_i$ such as 1) $h_i(x)=\mc{I}_{X}(x)$ where $X$ is a convex and compact {\color{black}(closed and bounded)} set; 2) $h_i(x)=c\|x\|_1$ where $c>0$.}
	\begin{lemma}\label{lemma:stronggap}
		Suppose that Assumptions \ref{asm:optsolexist}--\ref{asm:prob} hold. For any fixed point $\bx^\star$ of Prox-DGD,
		\begin{enumerate}
			\item Under Assumption \ref{asm:strongercond},
			\begin{equation}\label{eq:betaDGD}
			\|x_i^\star - \bar{x}^\star\|\le O\left(\frac{\alpha}{1-\beta}\right);
			\end{equation}
			\item Under Assumption \ref{asm:identicalstrongcond}, \eqref{eq:betaDGD} holds and
			\begin{equation}\label{eq:funcave}
			\begin{split}
			F(\bar{\bx}^\star) &\le F_{\operatorname{opt}}+O\left(\frac{\alpha}{1-\beta}+\frac{L\alpha^2}{2(1-\beta)^2}\right),
			\end{split}
			\end{equation}
			where $\bar{\bx}^\star=\mb{1}_n\otimes \bar{x}^\star$ and $L=\max_{i\in\mc{V}} L_i$.
		\end{enumerate}
	\end{lemma}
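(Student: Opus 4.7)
The plan is to exploit the fixed-point condition of Prox-DGD together with the spectral gap of $I-\mb{W}$, and then to bound a residual term using the hypotheses in each assumption. Writing out the first-order optimality condition of the proximal step, there exists $g^\star=(g_1^\star,\ldots,g_n^\star)$ with $g_i^\star\in\partial h_i(x_i^\star)$ such that $(I-\mb{W})\bx^\star=-\alpha(\nabla f(\bx^\star)+g^\star)$. Since $\bx^\star-\bar{\bx}^\star$ is orthogonal to $\operatorname{Null}(I-\mb{W})=\operatorname{span}(\mb{1}_n)\otimes\mbb{R}^d$, the spectral gap yields $\|(I-\mb{W})\bx^\star\|\ge(1-\beta)\|\bx^\star-\bar{\bx}^\star\|$, and therefore
\[
\|\bx^\star-\bar{\bx}^\star\|\le\frac{\alpha}{1-\beta}\|\nabla f(\bx^\star)+g^\star\|.
\]
Under Assumption~\ref{asm:strongercond}, claim 1) reduces to a uniform bound on $\|\nabla f(\bx^\star)+g^\star\|$. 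Case i) is immediate since Lipschitzness of $f_i+h_i$ bounds elements of $\nabla f_i+\partial h_i$. For case ii), I would combine $F(\bx^\star)\le F_{\operatorname{opt}}$ from Lemma~\ref{lemma:DGDoptimalitygap} with boundedness from below of each $f_j+h_j$ (which coercivity implies) to get an $\alpha$-independent upper bound on $(f_i+h_i)(x_i^\star)$; coercivity then confines $x_i^\star$ to a bounded set, and since $\operatorname{dom}h_i=\mbb{R}^d$ and $h_i$ is convex, $h_i$ is locally Lipschitz, so $\partial h_i$ is uniformly bounded on that set.

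Under Assumption~\ref{asm:identicalstrongcond}, subgradients of $h$ may be unbounded, so a different route is needed that uses the identical structure $h_i\equiv h$. Pairwise monotonicity of $\partial h$ yields
\[
\langle g^\star,\bx^\star-\bar{\bx}^\star\rangle=\frac{1}{2n}\sum_{i,j}\langle g_i^\star-g_j^\star,x_i^\star-x_j^\star\rangle\ge 0.
\]
Taking the inner product of the fixed-point equation with $\bx^\star-\bar{\bx}^\star$, using $\langle(I-\mb{W})\bx^\star,\bx^\star-\bar{\bx}^\star\rangle=\|\bx^\star-\bar{\bx}^\star\|_{I-\mb{W}}^2\ge(1-\beta)\|\bx^\star-\bar{\bx}^\star\|^2$, and dropping the non-negative $g^\star$-contribution gives $\|\bx^\star-\bar{\bx}^\star\|\le\tfrac{\alpha}{1-\beta}\|\nabla f(\bx^\star)\|$. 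What remains is to bound $\|\nabla f(\bx^\star)\|$ uniformly in $\alpha$: case ii) is immediate from Lipschitz $f_i$, and case iii) follows as in Assumption~\ref{asm:strongercond}ii) via coercivity and continuity of $\nabla f_i$.

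Finally, \eqref{eq:funcave} would follow from $L$-smoothness
\[
f(\bar{\bx}^\star)\le f(\bx^\star)+\langle\nabla f(\bx^\star),\bar{\bx}^\star-\bx^\star\rangle+\tfrac{L}{2}\|\bar{\bx}^\star-\bx^\star\|^2
\]
together with the Jensen inequality $h(\bar{\bx}^\star)=nh(\bar{x}^\star)\le\sum_i h(x_i^\star)=h(\bx^\star)$ (for identical convex $h$), $F(\bx^\star)\le F_{\operatorname{opt}}$, Cauchy--Schwarz, and the bounds on $\|\bar{\bx}^\star-\bx^\star\|$ and $\|\nabla f(\bx^\star)\|$ established above. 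I expect the main obstacle to be case i) of Assumption~\ref{asm:identicalstrongcond}, where neither Lipschitzness of $f_i$ nor coercivity is assumed: to still bound $\|\nabla f(\bx^\star)\|$ uniformly in $\alpha$, the plan is to combine boundedness from below of $h_i$ and of each $f_j+h_j$ with $F(\bx^\star)\le F_{\operatorname{opt}}$ to obtain an $\alpha$-independent upper bound on $f_i(x_i^\star)$, and then invoke the classical smoothness estimate $\|\nabla f_i(x)\|^2\le 2L_i(f_i(x)-\inf f_i)$, valid for any $L_i$-smooth bounded-below function, to convert this into the needed gradient bound.
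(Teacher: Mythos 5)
Your proof is correct. Part 1) follows the paper's own route essentially verbatim: the first-order optimality condition $(I-\mb{W})\bx^\star\in-\alpha\,\partial F(\bx^\star)$, the lower bound $\|(I-\mb{W})\bx^\star\|\ge(1-\beta)\|\bx^\star-\bar{\bx}^\star\|$, and a uniform bound on the relevant subgradient (by global Lipschitzness in case i, and by compactness of the sublevel set $\mc{S}=\{\bx:F(\bx)\le F_{\operatorname{opt}}\}$ together with local boundedness of subdifferentials of full-domain convex functions in case ii --- the paper phrases the latter as $\max_{\bx\in\mc{S},\mb{v}\in\partial F(\bx)}\|\mb{v}\|<+\infty$, which is the same fact you invoke). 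Where you genuinely diverge is the key intermediate inequality $\|\bx^\star-\bar{\bx}^\star\|\le\frac{\alpha}{1-\beta}\|\nabla f(\bx^\star)\|$ in Part 2): you obtain it by pairing the fixed-point identity with $\bx^\star-\bar{\bx}^\star$ and discarding the subgradient contribution via monotonicity of the common $\partial h$, i.e.\ $\langle g^\star,\bx^\star-\bar{\bx}^\star\rangle=\frac{1}{2n}\sum_{i,j}\langle g_i^\star-g_j^\star,x_i^\star-x_j^\star\rangle\ge 0$, whereas the paper introduces $\by^\star=\prox_{\alpha h}(\bar{\bx}^\star)$, notes that $\by^\star$ lies in the consensus subspace (because the $h_i$ are identical) while $\bar{\bx}^\star$ is the projection of $\bx^\star$ onto that subspace, and combines $\|\bx^\star-\by^\star\|\ge\|\bx^\star-\bar{\bx}^\star\|$ with non-expansiveness of the proximal operator. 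Both arguments use the identical-$h_i$ hypothesis in an essential way and yield the same constant; yours makes more transparent \emph{why} distinct $h_i$'s break the argument (the cross terms $\langle g_i^\star-g_j^\star,x_i^\star-x_j^\star\rangle$ lose their sign), while the paper's version never needs to name a subgradient of $h$. The remaining steps --- the smoothness/Jensen chain leading to \eqref{eq:funcave} and the three case-by-case bounds on $\|\nabla f(\bx^\star)\|$, including the estimate $\|\nabla f_i(x)\|^2\le 2L_i(f_i(x)-\underline{f})$ for bounded-below smooth $f_i$ in case i) --- coincide with the paper's.
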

	\begin{proof}
		See Appendix \ref{append:stronggap}.
	\end{proof}

Lemma \ref{lemma:stronggap} extends and generalizes several results from the literature. For example, \cite{yuan2016convergence,zeng2018nonconvex} considered the special case that all $f_i$'s are smooth and no regularizers are present ($h_i\equiv 0$), and reference \cite{choi2023convergence} studied a set-up where all $f_i$'s are smooth and the $h_i$'s are all equal and encode a projection onto the same set $X$. Our results, on the other hand, allow for general and non-identical $h_i$'s. We also provide an optimality gap \eqref{eq:errorDGD} in Lemma \ref{lemma:DGDoptimalitygap} that can be applied to more general problems without requiring Assumptions \ref{asm:strongercond} or \ref{asm:identicalstrongcond}. This allows us to study a broader range of problems, such as problem \eqref{eq:consensusprob} where the $f_i$ functions are smooth, and the $h_i$ functions encode projections onto distinct, convex sets. An optimality gap of the same order as \eqref{eq:errorDGD} is provided in \cite{zhang2021penalty}, which allows distinct $h_i$'s but requires strong convexity of $f_i$'s.

	\section{Asynchronous Prox-DGD}\label{sec:asycalg}

	In this section, we introduce the asynchronous Prox-DGD and analyze its convergence. The algorithm is fully asynchronous in the sense that it requires neither global synchronization between nodes nor access to a global clock. 

	\subsection{Algorithm Description}




 In the asynchronous Prox-DGD algorithm, each node is activated at discrete points in time. {\color{black}When node $i\in {\mathcal V}$ is activated, it first updates its local iterate $x_i \in {\mathbb R}^d$ and shares it with its neighbors $j\in {\mathcal N}_i$, and then enters a sleep mode.} 
 A node does not need to be active to receive information from its neighbors. Instead, each node $i$ uses a buffer ${\mathcal B}_i$ to store the information it receives from its neighbors between activation times.  When node $i$ is activated, it reads all $x_{ij}$ (the most recent $x_j$ it has received from neighboring node $j$) from the buffer, performs the local update

	\begin{equation}\label{eq:DGDxiupdate}
	x_i \leftarrow \prox_{\alpha h_i}\biggl(w_{ii}x_i+\sum_{j\in\mc{N}_i} w_{ij}x_{ij} - \alpha \nabla f_i(x_i)\biggr),
	\end{equation}
	and broadcasts the new $x_i$ to all its neighbors. Once a node $j\in\mc{N}_i$ receives this new $x_i$, it stores $x_i$ in its buffer $\mc{B}_j$. A detailed implementation is given in Algorithm \ref{alg:DGD}.

	To describe the asynchronous Prox-DGD mathematically, we index the iterates by $k\in\N_0$. The index is increased by $1$ whenever updates are performed by some {\color{black}(one or more)} nodes, and does not need to be known by the nodes -- $k$ is only introduced to order events in our theoretical analysis. With this indexing, each $x_{ij}$ in \eqref{eq:DGDxiupdate} is simply a delayed version of $x_j$ and the execution of (\ref{eq:DGDxiupdate}) can be written as
	\begin{equation}\label{eq:asyDGDupdateindex}
	x_i^{k+1} \!=\! \begin{cases}
	\prox_{\alpha h_i}\bigl(\sum_{j\in\bar{\mc{N}_i}} w_{ij}x_j^{s_{ij}^k} \!- \alpha \nabla f_i(x_i^k)\bigr), & k\in \mc{K}_i,\\
	x_i^k, & \text{otherwise}.
	\end{cases}
	\end{equation}
where $\mc{K}_i\subseteq \N_0$ is the set of iterations in which node $i$ is active, $\bar{\mc{N}}_i=\mc{N}_i\cup\{i\}$, {\color{black}$s_{ij}^k=\max\{t\le k: x_j^t = x_j\}\in [0, k]$ is the maximum index of the most recent version of $x_j$ available to node $i$ at iteration $k$}, and $s_{ii}^k=k$. {\color{black} The sets $\mc{K}_i, i\in\mc{V}$ are not necessarily disjoint. For example, in the synchronous setting where $k$ is increased by $1$ only if all nodes finish one update, the algorithm can be described by \eqref{eq:asyDGDupdateindex} with $\mc{K}_i=\N_0$ $\forall i\in\mc{V}$.}

%
 \begin{algorithm}[t!]
    \makeatletter
    \renewcommand\footnoterule{%
        \kern-3\p@
    \hrule\@width.4\columnwidth
    \kern2.6\p@}
    \makeatother
    \caption{Asynchronous Prox-DGD}
    \begin{minipage}{\linewidth}
    \renewcommand{\thempfootnote}{\arabic{mpfootnote}}
    
    \label{alg:DGD}
		\begin{algorithmic}[1]
			\STATE {\bfseries Initialization:} All the nodes agree on $\alpha>0$, and cooperatively set $w_{ij}$ $\forall \{i,j\}\in\mc{E}$.
			\STATE Each node $i\in\mc{V}$ chooses $x_i\in\mathbb{R}^d$, creates a local buffer $\mc{B}_i$, and shares $x_i$ with all neighbors in $\mc{N}_i$.
			\FOR{each node $i\in \mc{V}$}
			\STATE 
			keep \emph{receiving $x_j$ from neighbors and store $x_j$ in $\mc{B}_i$ until activation}\footnote{In the first iteration, each node $i\in\mc{V}$ can be activated only after it received $x_j$ from all $j\in\mc{N}_i$. If for some $j\in\mc{N}_i$, node $i$ receives multiple $x_j$'s, then it only stores the most recently received one and drop the remaining ones.}.
			\STATE set $x_{ij}=x_j$ for all $x_j\in\mc{B}_i$.
			\STATE empty $\mc{B}_i$.
			\STATE update $x_i$ according to \eqref{eq:DGDxiupdate}.
			\STATE send $x_i$ to all neighbors $j\in\mc{N}_i$.
			\ENDFOR
			\STATE \textbf{Until} a termination criterion is met.
		\end{algorithmic}
\end{minipage}
\end{algorithm}

    \subsection{Convergence Analysis}\label{sec:convana}
	In this section, we analyse the convergence of the asynchronous Prox-DGD under two typical models of asynchrony and show that the convergence can adjust to real delays rather than be determined by the worst-case delay. A key difference with the literature is that our results are established under a condition on the step-size $\alpha$ that does not depend on any information about the delays. We refer to this type of condition as a \emph{delay-free} step-size condition. {\color{black}Unless stated otherwise, all convergence results in this section assume that $W$ is an averaging matrix associated with $\mc{G}$ (see Definition \ref{def:def_av}).} Our first result allows for total asynchrony in the sense of Bertsekas and Tsitsiklis~\cite{bertsekas2015parallel}.

	\begin{assumption}[total asynchrony]\label{asm:totalasynchrony}
		The following holds:
		\begin{enumerate}
			\item $\mc{K}_i$ is an infinite subset of $\N_0$ for each $i\in\mc{V}$.
			\item $\lim_{k\rightarrow+\infty} s_{ij}^k= +\infty$ for any $i\in\mc{V}$ and $j\in\mc{N}_i$.
		\end{enumerate}
	\end{assumption}
    {\color{black}Assumption \ref{asm:totalasynchrony} pose conditions on the update frequency of each node and the delays. Specifically, condition 1) requires each node to update for infinite times, and condition 2) allows the information delays $k-s_{ij}^k$ to grow arbitrarily large but requires that 
    old information must eventually be purged from the system.}
    This assumption is well-suited for scenarios where communication and computation delays are unpredictable, 
        e.g., in massively parallel computing grids with heterogeneous computing nodes where delays can quickly add up if a node is saturated \cite{pmlr-v80-zhou18b}. Under total asynchrony, even the asynchronous consensus method \cite{nedic2010convergence}, which is equivalent to the asynchronous Prox-DGD for solving problem \eqref{eq:consensusprob} with $f_i=h_i\equiv 0$ {\color{black}(all agents collaborate to find a common decision)}, cannot be guaranteed to converge \cite[Example 1.2, Section 7.1]{bertsekas2015parallel}. Therefore, to establish convergence of the asynchronous Prox-DGD under total asynchrony, we assume that all $f_i$'s are strongly convex.
	\begin{assumption}\label{asm:strongconvexity}
		Each $f_i$ is $\mu_i$-strongly convex.
	\end{assumption}
	Clearly, Assumptions \ref{asm:prob} and \ref{asm:strongconvexity} imply Assumption \ref{asm:optsolexist}.
In the following theorem, we show that in contrast to the literature \cite{wu2017decentralized, tian2020achieving, cannelli2020asynchronous,su2022convergence, latafat2022primal, zhang2019fully,bianchi2021distributed} in which the parameters depend on the delay, the asynchronous Prox-DGD can converge to the same fixed point set as its synchronous counterpart, even under the total asynchrony assumption and a \emph{delay-free} step-size condition.

	%

    \begin{theorem}[total asynchrony]\label{thm:total}
		Suppose that Assumptions \ref{asm:prob}, \ref{asm:totalasynchrony}, \ref{asm:strongconvexity} hold and \begin{equation}\label{eq:stepsizecond}
		\alpha\in \left(0, 2\min_{i\in\mc{V}} \frac{w_{ii}}{L_i}\right).
		\end{equation}
		Then, $\{\bx^k\}$ generated by the asynchronous Prox-DGD converges to a fixed point of the synchronous Prox-DGD.
	\end{theorem}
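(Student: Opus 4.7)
The approach is to realize the synchronous Prox-DGD recursion as a fixed-point iteration $\bx^{k+1} = T(\bx^k)$ whose block components are
\[
T_i(\bx) = \prox_{\alpha h_i}\Bigl(\sum\nolimits_{j\in\bar{\mc{N}}_i} w_{ij} x_j - \alpha \nabla f_i(x_i)\Bigr),
\]
to verify that $T$ is a contraction in a block-maximum norm under \eqref{eq:stepsizecond}, and then to invoke the asynchronous convergence theorem of Bertsekas and Tsitsiklis \cite{bertsekas2015parallel}. The formulation of the asynchronous algorithm in \eqref{eq:asyDGDupdateindex} fits the classical template of that theorem exactly, because at iteration $k\in\mc{K}_i$ node $i$ uses only $x_j^{s_{ij}^k}$ for $j\in\bar{\mc{N}}_i$ and $s_{ii}^k=k$.

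The first step is the block-max contraction. Using non-expansiveness of $\prox_{\alpha h_i}$ (valid by Assumption \ref{asm:prob}), the triangle inequality on the off-diagonal coupling $\sum_{j\ne i}w_{ij}(x_j-y_j)$, and the standard bound for $\mu_i$-strongly-convex and $L_i$-smooth $f_i$,
\[
\|w_{ii}(x-y) - \alpha(\nabla f_i(x)-\nabla f_i(y))\| \le \rho_i\,\|x-y\|,
\]
with $\rho_i := \max\{|w_{ii}-\alpha\mu_i|,\,|w_{ii}-\alpha L_i|\}$, one obtains
\[
\|T_i(\bx) - T_i(\by)\| \le \rho_i\|x_i-y_i\| + \sum_{j\in\mc{N}_i} w_{ij}\|x_j-y_j\|.
\]
The step-size condition \eqref{eq:stepsizecond} is chosen precisely so that $\rho_i<w_{ii}$, and row-stochasticity of $W$ then gives $\rho_i+\sum_{j\ne i}w_{ij}=\rho_i+1-w_{ii}<1$. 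Setting $q:=\max_{i\in\mc{V}}(\rho_i+1-w_{ii})\in[0,1)$, I conclude that $T$ is a $q$-contraction in the block-max norm $\bx\mapsto\max_i\|x_i\|$; consequently $T$ has a unique fixed point $\bx^\star$, which coincides with the unique minimizer of \eqref{eq:penalprob}.

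The second step constructs nested box sets suited to the asynchronous convergence theorem. Choose $R\ge\max_i\|x_i^0-x_i^\star\|$ and set, for every $s\in\N_0$,
\[
X_i(s) = \{u\in\mathbb{R}^d : \|u-x_i^\star\|\le q^sR\},\qquad X(s) = \prod_{i\in\mc{V}} X_i(s).
\]
Then $X(s)$ is a Cartesian product (the box condition), the sets are nested with $\bigcap_s X(s)=\{\bx^\star\}$, and $\bx^0\in X(0)$. The contraction of Step~1 yields the synchronous convergence condition $T_i(X(s))\subseteq X_i(s+1)$ for every $i\in\mc{V}$.

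The final step applies the asynchronous convergence theorem (e.g.\ Proposition~6.2.1 of \cite{bertsekas2015parallel}): under Assumption~\ref{asm:totalasynchrony}, the box and synchronous convergence conditions together imply that for every $s\ge0$ there exists $K(s)$ with $\bx^k\in X(s)$ for all $k\ge K(s)$; hence $\bx^k\to\bx^\star$, which is a fixed point of the synchronous Prox-DGD. The main obstacle is pinpointing the block-max contraction, specifically checking that \eqref{eq:stepsizecond} is the exact range in which the ``diagonal'' factor $\rho_i$ stays below $w_{ii}$ so that the row-stochasticity of $W$ absorbs the neighbor coupling into a strict contraction; once this single inequality is in place, the remainder is a direct application of the Bertsekas--Tsitsiklis machinery.
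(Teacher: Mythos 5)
Your proof is correct and follows the same overall architecture as the paper's: recast the asynchronous Prox-DGD as an asynchronous fixed-point iteration on $\T(\bx)=\prox_{\alpha h}(\mb{W}\bx-\alpha\nabla f(\bx))$, establish a block-maximum-norm contraction under \eqref{eq:stepsizecond}, and conclude via totally-asynchronous convergence machinery. The two arguments differ in how the contraction is obtained and in which convergence theorem is invoked. You prove a genuine two-point contraction $\|\T(\bx)-\T(\by)\|_{\infty}^b\le q\|\bx-\by\|_{\infty}^b$ with $q=\max_{i}\left(\max\{|w_{ii}-\alpha\mu_i|,|w_{ii}-\alpha L_i|\}+1-w_{ii}\right)$, using the triangle inequality on the neighbor coupling and the classical Lipschitz constant of the gradient map $I-(\alpha/w_{ii})\nabla f_i$; you then feed nested box sets into the Bertsekas--Tsitsiklis Asynchronous Convergence Theorem directly. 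The paper instead proves only the weaker pseudo-contraction toward fixed points, $\|\T(\bx)-\bx^\star\|_{\infty}^b\le\rho\|\bx-\bx^\star\|_{\infty}^b$ with $\rho$ as in \eqref{eq:rhodgd}, via Jensen's inequality on the squared norm together with \eqref{eq:proofstrongconvexity}--\eqref{eq:proofsmooth}, and then cites Theorem 23 of \cite{Feyzmahdavian23}, which is itself built on the same asynchronous convergence theorem. Your stronger contraction buys a self-contained argument and immediate uniqueness of the fixed point via Banach, at the price of a different (and generally incomparable) constant $q$ versus $\rho$; both are strictly less than one on the whole range \eqref{eq:stepsizecond} (your check that $\rho_i<w_{ii}$ is exactly equivalent to $\alpha<2w_{ii}/L_i$ is right), and since Theorem \ref{thm:total} asserts only asymptotic convergence, the difference in constants is immaterial here. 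The one step you use as a black box and should verify explicitly is that $I-\gamma\nabla f_i$ is $\max\{|1-\gamma\mu_i|,|1-\gamma L_i|\}$-Lipschitz for $\mu_i$-strongly convex, $L_i$-smooth $f_i$; this is standard and correct, so there is no gap.
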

	\begin{proof}
		See Appendix \ref{append:thmtotal}.
	\end{proof}
	
	Under total asynchrony, 
	there is no lower bound on the update frequency of nodes and no upper bound on the delays, and we are only able to give asymptotic convergence guarantees and handle strongly convex objective functions.
	To derive non-asymptotic convergence rates and handle general convex objective functions, we assume the more restrictive notion of partial asynchrony~\cite{bertsekas2015parallel}.
	\begin{assumption}[partial asynchrony]\label{asm:partialasynchrony}
		There exist non-negative integers $B$ and $D$ such that
		\begin{enumerate}
			\item For every $i\in\mc{V}$ and for every $k\in\N_0$, at least one element in the set $\{k,\ldots,k+B\}$ belongs to $\mc{K}_i$.
			\item There holds
			\begin{equation*}
			k-D \le s_{ij}^k \le k
			\end{equation*}
			for all $i\in\mc{V}$, $j\in\mc{N}_i$, and $k\in \mc{K}_i$.
		\end{enumerate}
	\end{assumption}
    In Assumption \ref{asm:partialasynchrony}, $B$ and $D$ characterize the minimum update frequency and the maximal information delay, respectively. If $B=D=0$, then Assumption \ref{asm:partialasynchrony} reduces to the synchronous scheme where all local variables $x_i^k$ $\forall i\in\mc{V}$ are instantaneously updated at every iteration $k\in\N_0$ and there are no information delays. Moreover, it is worth noting that different from the bounded transmission delay assumption in \cite{zhang2019asyspa,zhang2019fully,assran2020asynchronous} that does not allow for packet loss, both Assumptions \ref{asm:totalasynchrony}, \ref{asm:partialasynchrony} allow for packet losses. Specifically, packets can be classified as ``used in update'' and ``unused (possibly lost or on the fly)'', and Assumptions \ref{asm:totalasynchrony}, \ref{asm:partialasynchrony} only pose restrictions on delays of the ``used in update'' packets ({\color{black}effective delay \cite{spiridonoff2020robust}}).
	
	To state our convergence result, we define the block-wise maximum norm for any $\bx=(x_1^T,\ldots, x_n^T)^T\in\mathbb{R}^{nd}$ as
	\begin{equation*}
	\|\bx\|_{\infty}^b = \max_{i\in\mc{V}} \|x_i\|.
	\end{equation*}
	The following theorem establishes asymptotic convergence of the asynchronous Prox-DGD for convex objective functions under partial asynchrony, and improves the rate to linear when the objective functions are strongly convex.
	\begin{theorem}[partial asynchrony]\label{thm:partial}
		Suppose that Assumptions \ref{asm:optsolexist}, \ref{asm:prob}, \ref{asm:partialasynchrony} and the step-size condition \eqref{eq:stepsizecond} hold. Then, $\{\bx^k\}$ generated by the asynchronous Prox-DGD converges to a fixed point $\bx^\star$ of its synchronous counterpart. 
  
      If, in addition, Assumption \ref{asm:strongconvexity} holds, then
		\begin{equation}\label{eq:linear_convergence}
		\|\mathbf{x}^k-\mathbf{x}^\star\|_{\infty}^b\le \rho^{\lfloor k/(B+D+1)\rfloor}\|\mathbf{x}^0-\mathbf{x}^\star\|_{\infty}^b,
		\end{equation}
		where
		\begin{align}
		\rho=\sqrt{1-\alpha\min_{i\in\mc{V}}\left(\mu_i\left(2-\frac{\alpha L_i}{w_{ii}}\right)\right)}.\label{eq:rhodgd}
		\end{align}
	\end{theorem}
	\begin{proof}
		See Appendix \ref{append:thmpartial}.
	\end{proof}

	   {\color{black}By Theorems \ref{thm:total} and \ref{thm:partial}, under the delay-free step-size condition \eqref{eq:stepsizecond} the asynchronous Prox-DGD can converge to a fixed point of its synchronous counterpart, and the convergence guarantees that we can give improve as the amount of asynchrony decreases from total to partial asynchrony. By Lemma \ref{lemma:DGDoptimalitygap}, under the conditions in Theorems \ref{thm:total} or \ref{thm:partial}, any fixed point of the synchronous Prox-DGD is a sub-optimal solution to problem \eqref{eq:consensusprob} with a bounded optimality gap. If, in addition, Assumptions \ref{asm:strongercond} or \ref{asm:identicalstrongcond} hold, then the optimality gap result can be improved according to Lemma \ref{lemma:stronggap}.

    \begin{rem}[effect of network topology]
        To analyse the effects of network topology, we suppose that $L_i=L$ and $\mu_i=\mu$ for all $i\in\mc{V}$ and some $L\ge \mu>0$, and set $\alpha = \min_{i\in\mc{V}}  w_{ii}/L$ which is the midpoint of the step-size range \eqref{eq:stepsizecond}. By Lemmas \ref{lemma:DGDoptimalitygap}--\ref{lemma:stronggap}, the optimality gap satisfies
        \[\|x_i^\star - \bar{x}^\star\|\le O\left(\sqrt{\frac{\min_{i\in\mc{V}} w_{ii}}{L(1-\beta)}}\right),\quad\forall i\in\mc{V},\]
        or
        \[\|x_i^\star - \bar{x}^\star\|\le O\left(\frac{\min_{i\in\mc{V}} w_{ii}}{L(1-\beta)}\right),\quad\forall i\in\mc{V},\]
        depending on the assumptions. Moreover, the convergence rate $\rho$ in \eqref{eq:rhodgd} reduces to
        \[\rho=\sqrt{1-\frac{\mu\min_{i\in\mc{V}}w_{ii}}{L}}.\] From the above discussions, a smaller $\min_{i\in\mc{V}} w_{ii}$ yields a smaller optimality gap but a slower convergence, and a smaller $\beta$ leads to a smaller optimality gap. To analyse the effect of the network topology, we consider a particular yet widely used averaging matrix -- the Metropolis weight matrix \cite{nedic17} where $\min_{i\in\mc{V}} w_{ii}=\frac{1}{1+\max_{i\in\mc{V}} |\mc{N}_i|}$. For this weight matrix, a denser network often implies a larger $\max_{i\in\mc{V}} |\mc{N}_i|$ and a smaller $\beta$ (complete graph yields $\beta=0$), which, according to the above discussions, implies a smaller optimality gap but a slower convergence to the fixed point. 
    \end{rem}}

    {\color{black} An issue in Theorems \ref{thm:total}--\ref{thm:partial} is the dependency of the step-size condition \eqref{eq:stepsizecond} on the typically unknown constant $L_i$. To address this issue, we modify \eqref{eq:DGDxiupdate} as
    \begin{equation}\label{eq:adapt_DGDxiupdate}
	x_i \leftarrow \prox_{\alpha h_i}\bigl(x_i+\gamma_i((w_{ii}x_i+\sum_{j\in\mc{N}_i} w_{ij}x_{ij} - \alpha \nabla f_i(x_i))-x_i)\bigr),
	\end{equation}
    where $\gamma_i$ is the step-size and \eqref{eq:adapt_DGDxiupdate} reduces to \eqref{eq:DGDxiupdate} when $\gamma_i=1$. The update \eqref{eq:adapt_DGDxiupdate} can be treated as the proximal gradient descent method for solving problem \eqref{eq:penalprob}, with local step-sizes $\gamma_i$, $i\in\mc{V}$. For the update \eqref{eq:adapt_DGDxiupdate}, we impose an adaptive step-size condition ($\gamma_i=\gamma_i^k$) that can guarantee its convergence.
    \begin{theorem}\label{thm:adapt_step}
        Suppose that Assumptions \ref{asm:prob}, \ref{asm:totalasynchrony}, \ref{asm:strongconvexity} hold. If for each $i\in\mc{V}$, $\gamma_i^{\min} = \min_{k\in\mc{K}_i} \gamma_i^k>0$ and
        \begin{equation}\label{eq:adapt_step}
            \gamma_i^k\le  \frac{1}{2\alpha L_i^k+1-w_{ii}},\quad\forall i\in\mc{V},~k\in\mc{K}_i,
        \end{equation}
        where $L_i^k=
    \frac{\langle \nabla f_i(x_i^{k+1}) - \nabla f_i(x_i^k), x_i^{k+1}-x_i^k\rangle}{\|x_i^{k+1}-x_i^k\|^2}$, then $\{\bx^k\}$ generated by \eqref{eq:adapt_DGDxiupdate} with $\gamma_i=\gamma_i^k$ converges to a fixed point of the synchronous Prox-DGD. If, in addition, Assumption \ref{asm:partialasynchrony} holds, so does  \eqref{eq:linear_convergence}  with $\rho = \sqrt{\frac{1}{1+\alpha\min_{i\in\mc{V}}(\gamma_i^{\min}\mu_i)}}$.
    \end{theorem}
    \begin{proof}
        See Appendix \ref{append:proof_thm_adapt_step}.
    \end{proof}
    }
    {\color{black} Compared to Theorems \ref{thm:total}--\ref{thm:partial}, Theorem \ref{thm:adapt_step} does not impose upper bounds on $\alpha$, which allows us to choose larger $\alpha$ that possibly achieve faster convergence. Moreover, the adaptive step-size condition \eqref{eq:adapt_step} does not include $L_i$ that may be difficult to determine in practice. Below, we give a simple back-tracking scheme to find $\gamma_i^k$ that meets \eqref{eq:adapt_step}:
    
    {\bf Back-tracking}: Set $c\in(0,1)$ and let $L_i^{-1}$ be a small positive number. At each iteration $k$,
    \begin{itemize}
        \item {\bf Step} 0: set $\gamma_i^k = c/(2\alpha L_i^{k-1}+1-w_{ii})$.
        \item {\bf Step} 1: update $x_i^{k+1}$ by \eqref{eq:adapt_DGDxiupdate} and compute $L_i^k$.
        \item {\bf Step} 2: check \eqref{eq:adapt_step}. If it holds, set $k=k+1$. Otherwise, set $\gamma_i^k = c/(2\alpha L_i^k+1-w_{ii})$ and go to {\bf Step} 1.
    \end{itemize}
    Note that if \eqref{eq:adapt_step} fails to hold, then $\gamma_i^k>1/(2\alpha L_i^k+1-w_{ii})$ and setting the new $\gamma_i^k= c/(2\alpha L_i^k+1-w_{ii})$ ensures $\gamma_i^k$ to be smaller than $c$ times of its old value. Moreover, because $L_i^k\le L_i$, as long as $\gamma_i^k\le 1/(2\alpha L_i+1-w_{ii})$, \eqref{eq:adapt_step} holds. Therefore,  \eqref{eq:adapt_step} will be satisfied within a finite number of attempts.}

    \subsubsection{Comparison with related methods}\label{sssec:comparison}
    To appreciate the novelties of the asynchronous Prox-DGD, we categorize the literature \cite{nedic2010convergence,zhang2019asyspa,doan2017convergence,assran2020asynchronous,zhang2019fully,wu2017decentralized,tian2020achieving, kungurtsev2023decentralized,ubl2021totally,mishchenko2018delay,bianchi2021distributed} on asynchronous consensus optimization according to their step-sizes:
    
    \noindent\textbf{\emph{Diminishing step-size}}: References \cite{zhang2019asyspa,doan2017convergence,assran2020asynchronous,kungurtsev2023decentralized,spiridonoff2020robust} consider diminishing step-sizes that decrease to zero as the number of iterations increases to infinity. The step-sizes are delay-free, but decrease rapidly and can often lead to slow practical convergence, especially for deterministic optimization problems.  {\color{black}Under proper problem assumptions and partial asynchrony (Assumption \ref{asm:partialasynchrony}), \cite{zhang2019asyspa,doan2017convergence,assran2020asynchronous,kungurtsev2023decentralized,spiridonoff2020robust} establish asymptotic/sublinear convergence.}
	
    \noindent\textbf{\emph{Delay-dependent fixed step-size}}: This line of work includes~\cite{wu2017decentralized, tian2020achieving,zhang2019fully,bianchi2021distributed}. They assume the existence of a known upper bound $\tau$ on the information delay and use fixed parameters that rely on (and decrease with) $\tau$. {\color{black}For example, the step-sizes in \cite{wu2017decentralized,bianchi2021distributed} are of the order $O(\frac{1}{\tau+1})$, the step-size in \cite{zhang2019fully} is of $O((n^2\tau)^{-n\tau})$, and the step-size in \cite{tian2020achieving} is of $O(\eta^{n\tau})$ where $\eta=\min_{i\in\mc{V}} w_{ii}\in(0,1)$.} These step-sizes are difficult to use because the delay bound is usually unknown in practice. Moreover, in many systems, the worst-case delay is large but rarely attained, so step-sizes tuned in this way are unnecessarily small and lead to slow convergence. {\color{black}Among these works, \cite{wu2017decentralized} derive asymptotic convergence in solving problem \eqref{eq:consensusprob} with non-zero $h_i$'s, \cite{tian2020achieving,zhang2019fully} show linear convergence for problem \eqref{eq:consensusprob} with zero $h_i$'s and strongly convex and smooth $f_i$'s under the partial asynchrony assumption, and \cite{bianchi2021distributed} studies a different problem.}

    \noindent \textbf{\emph{Delay-free fixed step-size}}: This category includes \cite{nedic2010convergence, ubl2021totally, mishchenko2018delay}. The step-sizes are easier to determine and are typically larger than the delay-dependent ones, leading to a faster practical convergence compared to the other two types of step-sizes. However, existing works in this category can only handle quadratic problems \cite{nedic2010convergence, ubl2021totally} or star networks \cite{mishchenko2018delay}. {\color{black}They all establish linear convergence under the partial asynchrony assumption, but for  different problem classes. Specifically, \cite{ubl2021totally, mishchenko2018delay} assume the $f_i$'s in problem \eqref{eq:consensusprob} to be strongly convex and smooth and \cite{ubl2021totally} also require the $h_i$'s to be zero.}

    Our step-size \eqref{eq:stepsizecond} belongs to the last category, and has several novelties with respect to the state-of-the-art.
     \begin{enumerate}
         \item Existing works using delay-free fixed step-sizes can only handle quadratic problems \cite{nedic2010convergence, ubl2021totally} or star networks \cite{mishchenko2018delay}. In contrast, Algorithm \ref{alg:DGD} can deal with both non-quadratic objective functions and non-star communication networks (Theorems \ref{thm:total}--\ref{thm:partial}), which is a substantial improvement.
        \item Only Algorithm \ref{alg:DGD} and the algorithm in \cite{wu2017decentralized} can address non-quadratic objective functions and non-identical local constraints over non-star networks. However, \cite{wu2017decentralized} uses a probabilistic model to characterize the node that updates at each iteration, which is easier to analyse but less practical than partial asynchrony.
        \item {\color{black}Although \cite{zhang2019asyspa,doan2017convergence,assran2020asynchronous,zhang2019fully,wu2017decentralized,tian2020achieving, kungurtsev2023decentralized,bianchi2021distributed} can converge to the exact optimum, while Algorithm \ref{alg:DGD} only converges to an approximate minimum, the parameter range of Algorithm \ref{alg:DGD} is wider which typically leads to faster practical convergence. Moreover, the linear convergence rates in Theorem \ref{thm:partial}--\ref{thm:adapt_step} are derived for non-zero $h_i$'s while \cite{zhang2019fully} and \cite{tian2020achieving}, who also derive linear rates, require $h_i=0$ $\forall i\in\mc{V}$.}
     \end{enumerate}
Admittedly, the algorithms in~\cite{nedic2010convergence,tian2020achieving,zhang2019asyspa,kungurtsev2023decentralized,assran2020asynchronous,zhang2019fully} can handle directed networks, which is more general than the undirected network model in our work. 

    \subsection{Convergence in terms of the Realized Level of Asynchrony}

    As shown in Fig.~\ref{fig:histogram}, the typical delays in real systems may be (and often are)  much smaller than the worst-case.  In these scenarios, the convergence of asynchronous methods whose parameters depend on the worst-case delay may be slowed down significantly by the conservative parameters (See Fig. 1 in \cite{Wu23ICML}). 
    In this subsection, we will show that the convergence of the asynchronous Prox-DGD {\color{black} adjusts} to the actual level of asynchrony ({\color{black}the update frequency of each node and the delays}) rather than being hampered by the worst-case delay. {\color{black}Our main idea is to split the set $\N_0$ of all iteration indexes into the union of disjoint intervals $[k^m, k^{m+1})$, $m\in\N_0$, and show that after each interval, an upper bound on the optimality error $\|\bx^k-\bx^\star\|_{\infty}^b$ decreases. In particular, for any $m\in\N_0$,
    \begin{equation}\label{eq:shrink_error_adapt}
	   	\|\mathbf{x}^k-\mathbf{x}^\star\|_{\infty}^b\le \rho\|\mathbf{x}^{k^m}-\mathbf{x}^\star\|_{\infty}^b,\quad \forall k\ge k^{m+1}.
	\end{equation}
    The length of the intervals $[k^m, k^{m+1})$, $m\in\N_0$ reflects the level of asynchrony, which will be detailed later below Theorem \ref{thm:adapt}.}
    
    
    To introduce $\{k^m\}_{m\in\N_0}$, we define
	\begin{equation}
	t_i^k = \max\{\ell: \ell\le k, \ell\in \mc{K}_i\}
	\end{equation}
	as the iteration index of the most recent update by node $i$. {\color{black}Because $x_i^{t+1}=x_i^t$ for all $t\in [t_i^k+1, k]$, we have $x_i^{k+1}=x_i^{t_i^k+1}$ for any $k\in\N_0$.} Therefore, for all $i\in\mc{V}$,
    \begin{equation}\label{eq:eachblocks}
	x_i^{k+1} = \prox_{\alpha h_i}\left(\sum_{j\in\bar{\mc{N}}_i} w_{ij}x_j^{s_{ij}^{t_i^k}}-\alpha\nabla f_i(x_i^{t_i^k})\right).
	\end{equation}
    {\color{black}Note that \eqref{eq:eachblocks} is not an update rule, but expresses $x_i^{k+1}$ for all $i\in\mc{V}$ even if $i$ does not update at $k$.}
    The maximum age of the information used to obtain each block of $\bx^{k+1}$ is then
	\begin{equation}
	\tau^k = k - \min_{i\in\mc{V}}\min_{j\in\bar{\mc{N}}_i} s_{ij}^{t_i^k}.
	\end{equation}
    We define $\{k^m\}_{m\in\N_0}$ as: $k^0=0$ and
    \begin{equation}\label{eq:defkm}
	k^{m+1} = \min\{k: t-\tau^t\ge k^m~\forall t\ge k\}+1, \forall m\in\N_0,
    \end{equation}
    which describes the following Markov property: for any $m\in\N_0$, when $k+1\ge k^{m+1}$, the computation of $\bx^{k+1}$ does not depend on any  $\bx^{\ell}$ with $\ell<k^m$ (by \eqref{eq:eachblocks} and $k-\tau^k\ge k^m$).

    \begin{theorem}\label{thm:adapt}
		Suppose that Assumptions \ref{asm:prob}, \ref{asm:strongconvexity}, and the step-size condition \eqref{eq:stepsizecond} hold. Let $\{\bx^k\}$ be generated by the asynchronous Prox-DGD. If $k\ge k^m$, then \eqref{eq:shrink_error_adapt} holds and
	\begin{equation*}
	   	\|\mathbf{x}^k-\mathbf{x}^\star\|_{\infty}^b\le \rho^m\|\mathbf{x}^0-\mathbf{x}^\star\|_{\infty}^b,
	\end{equation*}
    where $\rho$ is given in \eqref{eq:rhodgd}.
    \end{theorem}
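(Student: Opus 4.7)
The plan is to prove the theorem by induction on $m$, using a one-step contraction bound on the block-maximum norm $\|\cdot\|_\infty^b$ together with the Markov-type property built into the definition of $\{k^m\}$. The key technical ingredient will be establishing that, for every $k \in \N_0$, the asynchronous Prox-DGD satisfies
\begin{equation*}
\|\bx^{k+1}-\bx^\star\|_{\infty}^b \le \rho \, \max_{\ell \in [k-\tau^k, k]} \|\bx^{\ell}-\bx^\star\|_{\infty}^b .
\end{equation*}
To derive this per-iteration contraction, I would start from the identity \eqref{eq:eachblocks}, subtract the fixed-point condition $x_i^\star = \prox_{\alpha h_i}(\sum_j w_{ij}x_j^\star - \alpha\nabla f_i(x_i^\star))$, and use non-expansiveness of $\prox_{\alpha h_i}$. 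Isolating the $j=i$ term inside the argument and dividing/multiplying it by $w_{ii}$, I would rewrite it as a convex combination (since $W$ is stochastic), apply Jensen's inequality to $\|\cdot\|^2$, and on the diagonal block combine $\mu_i$-strong convexity with co-coercivity of $\nabla f_i$. This yields
\begin{equation*}
\|g_i\|^2 \le \left(1-\tfrac{\alpha}{w_{ii}}\mu_i(2-\tfrac{\alpha L_i}{w_{ii}})\right)\|x_i^{t_i^k}-x_i^\star\|^2,
\end{equation*}
valid under the step-size condition \eqref{eq:stepsizecond}, where $g_i=(x_i^{t_i^k}-x_i^\star)-\tfrac{\alpha}{w_{ii}}(\nabla f_i(x_i^{t_i^k})-\nabla f_i(x_i^\star))$. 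Recombining with the off-diagonal $w_{ij}\|x_j^{s_{ij}^{t_i^k}}-x_j^\star\|^2$ terms and bounding each entry by the block-max over $[k-\tau^k,k]$ gives the one-step contraction with the rate $\rho$ defined in \eqref{eq:rhodgd}.

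With this contraction in hand, I would proceed by induction on $m$. For the base case $m=0$, applying the one-step inequality repeatedly and using $\rho<1$ shows $\|\bx^{k}-\bx^\star\|_\infty^b \le \|\bx^0-\bx^\star\|_\infty^b$ for all $k\ge 0=k^0$. For the inductive step, suppose $\|\bx^{\ell}-\bx^\star\|_\infty^b \le \rho^m \|\bx^0-\bx^\star\|_\infty^b$ for every $\ell\ge k^m$. Pick any $k\ge k^{m+1}$; then $k-1\ge k^{m+1}-1$, and the defining property of $k^{m+1}$ in \eqref{eq:defkm} guarantees $(k-1)-\tau^{k-1}\ge k^m$. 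Hence every index $\ell$ entering the one-step contraction for $\bx^{k}$ satisfies $\ell \ge k^m$, so the induction hypothesis applies to each such $\ell$, yielding
\begin{equation*}
\|\bx^{k}-\bx^\star\|_\infty^b \le \rho \cdot \rho^m \|\bx^0-\bx^\star\|_\infty^b = \rho^{m+1} \|\bx^0-\bx^\star\|_\infty^b,
\end{equation*}
completing the induction.

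The main obstacle, in my view, is the derivation of the per-iteration contraction in the block-maximum norm. The difficulty is that each block of $\bx^{k+1}$ mixes a diagonal gradient-descent-type term with off-diagonal delayed values; one must simultaneously (i) exploit the positive diagonal weight $w_{ii}$ to scale the gradient step appropriately so that the contraction factor matches the synchronous rate, and (ii) prevent the off-diagonal delayed blocks from destroying the contraction. The trick of absorbing $1/w_{ii}$ into the effective step-size and then using the convexity of $\|\cdot\|^2$ with the stochastic weights $\{w_{ij}\}$ is what makes both objectives compatible, but getting the constants aligned with the exact form of $\rho$ in \eqref{eq:rhodgd} requires care. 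Once this step is secured, the induction over $\{k^m\}$ and the Markov property of \eqref{eq:defkm} make the rest of the argument essentially mechanical.
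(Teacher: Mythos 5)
Your proposal is correct and follows essentially the same route as the paper: the per-iteration bound $\|\bx^{k+1}-\bx^\star\|_{\infty}^b\le \rho\max_{k-\tau^k\le t\le k}\|\bx^t-\bx^\star\|_{\infty}^b$ is exactly what the paper obtains by combining the pseudo-contractivity estimate \eqref{eq:contractDGD} (proved in Step 2 of Appendix \ref{append:thmtotal} via the same prox-nonexpansiveness, Jensen, and strong-convexity/co-coercivity manipulations you describe) with the argument of Theorem 24 in \cite{Feyzmahdavian23}, and the induction over $\{k^m\}$ using the Markov property of \eqref{eq:defkm} is the paper's concluding step, which you have merely written out in full detail.
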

    \begin{proof}
		See Appendix \ref{append:thmadapt}.
	\end{proof}
    By Theorem \ref{thm:adapt}, the convergence speed of the asynchronous Prox-DGD is determined by $m^k=\max\{m:k^m\le k\}$. {\color{black}To be more clear, $\{k^m\}_{m\in\N_0}$ splits $\N_0$ into disjoint intervals and $m^k$ indicates the interval that $k$ belongs to ($k\in [k^{m^k}, k^{m^{k+1}})$).} Under Assumption \ref{asm:partialasynchrony}, \cite{Feyzmahdavian23} shows that $\tau^k\le B+D$ $\forall k\in\N_0$. Then, by the definitions of $\{k^m\}$ and $\{m^k\}$, we have $k^m\le m(B+D+1)$ and
    \begin{equation}\label{eq:mkmupperbound}
	m^k\ge \left\lfloor \frac{k}{B+D+1}\right\rfloor,~\forall k\in\N_0,
    \end{equation}
    which recovers the guarantees in Theorem \ref{thm:partial}. However, $m^k$ may be much larger than the lower bound \eqref{eq:mkmupperbound}, especially if the typical level of asynchrony is much smaller than the worst case, indicating a faster convergence. To make the discussion more precise, we consider two extreme cases.
    \begin{itemize}
        \item 
        At one extreme is a scenario where the worst-case bounds $B$ and $D$ are attained only once, and the algorithm runs cyclically without information delays afterwards. Then,
    \begin{equation}\label{eq:mkbest}
            m^k \ge \frac{k-(B+D+1)}{n}, \forall k\in\N_0.
        \end{equation}
        \item At the other extreme are scenarios where (15) hold with equality. 
        This happens, for example, in a system where for every $t\in\N_0$, if a node updates at the $t(B+D+1)$th iteration, and its last update is delayed by $D$ and occurs at the $t(B+D+1)-B$th iteration.
    \end{itemize}

By \eqref{eq:mkbest}, the quantity $m^k$ under the most favorable delays is determined by $n$ rather than $B$ and $D$ and is much larger than that of the worst-case~\eqref{eq:mkmupperbound}. In practice, the convergence will be somewhere in between these two extremes. To illustrate this, we run the asynchronous Prox-DGD for $10,000$ iterations in a similar setting as we used for recording the delays shown in Fig. \ref{fig:histogram}. Specifically, we used $n=10$ and recorded $B=37$ and $D=40$ during the execution. Fig. \ref{fig:adaptivity} plots the realized $\{m^k\}$ (``simu'') and those obtained under the ``best'' and ``worst'' partial asynchronous realizations defined above. We see that the realized $m^k$ values lie between the two theoretical extremes and are roughly triple those of the ``worst'' model. 
    \begin{figure}
        \centering
        \includegraphics[scale=0.6]{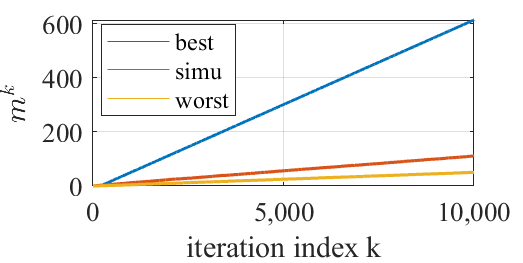}
        \caption{realized $m^k$ under different delay patterns}
        \label{fig:adaptivity}
        \vspace{-0.2cm}
    \end{figure}

    \section{Asynchronous DGD-ATC for Smooth Problems}
    DGD-ATC \cite{ATC_first} is a variant of DGD \cite{yuan2016convergence} using the adapt-then-combine technique. It  solves problem \eqref{eq:consensusprob} with $h_i\equiv 0$ $\forall i\in\mc{V}$ through the updates
	%
	\begin{equation}\label{eq:DGD-ATC}
	\mathbf{x}^{k+1}= \mb{W}(\bx^k - \alpha \nabla f(\bx^k)),
	\end{equation}
	where $\alpha>0$ is the step-size and $\mb{W}=W\otimes I_d$ for an averaging matrix $W\in\R^{n\times n}$ associated with $\mc{G}$. {\color{black}
 Different from Prox-DGD \eqref{eq:DGD} with $h_i=0$ $\forall i\in\mc{V}$ where each node $i$ first averages $x_i$ with $x_j$ from its neighbors and then performs a gradient descent step, in DGD-ATC \eqref{eq:DGD-ATC} each node first executes a gradient descent step and then averages the resulting local term with that of its neighbors.} 
 In this section, we will show that similar to the asynchronous Prox-DGD, the asynchronous DGD-ATC can also converge under a delay-free step-size condition and has an even faster convergence when handling smooth problems, which will be demonstrated theoretically later and numerically in Section \ref{sec:exp}.

    {\color{black}The papers \cite{zhao2015asynchronous,zhao2015asynchronousII} analyse DGD-ATC with fixed step-sizes, but only for strongly convex objective functions. In the lemma below, we show that under Assumption \ref{asm:prob} that does not require strong convexity, DGD-ATC admits at least one fixed-point, and the optimality gap is similar to that of the synchronous Prox-DGD in Lemma \ref{lemma:stronggap}.}

    \begin{lemma}\label{lemma:DGDATCoptimalitygap}
		Suppose that Assumption \ref{asm:prob} holds, $h_i\equiv 0$ $\forall i\in\mc{V}$, and the optimal set of problem \eqref{eq:consensusprob} is bounded. Then the fixed point set of DGD-ATC \eqref{eq:DGD-ATC} is non-empty and for any fixed point $\bx^\star\in\mbb{R}^{nd}$, $F(\bx^\star)\le F_{\operatorname{opt}}$ and \eqref{eq:betaDGD}--\eqref{eq:funcave} hold.
	\end{lemma}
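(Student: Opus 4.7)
The plan is to parallel the proof of Lemma~\ref{lemma:stronggap}, with the penalty matrix $\mb{W}^{-1}-I$ taking the role of $I-\mb{W}$ and with the boundedness of the optimal set of \eqref{eq:consensusprob} substituting for Assumption~\ref{asm:strongercond} or \ref{asm:identicalstrongcond}. I would first rewrite the fixed-point equation $\bx^\star=\mb{W}(\bx^\star-\alpha\nabla f(\bx^\star))$ by left-multiplying by $\mb{W}^{-1}$ (well-defined since $W\succ\mathbf{0}$) as
\begin{equation*}
(\mb{W}^{-1}-I)\bx^\star+\alpha\nabla f(\bx^\star)=\mathbf{0},
\end{equation*}
which is the first-order optimality condition of the convex program $\min_{\bx\in\mathbb{R}^{nd}} G(\bx):=f(\bx)+\tfrac{1}{2\alpha}\bx^T(\mb{W}^{-1}-I)\bx$. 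Since $W$ is a symmetric positive-definite doubly stochastic matrix, $\mb{W}^{-1}-I\succeq\mathbf{0}$ has null space $\{\mb{1}_n\otimes y:y\in\mathbb{R}^d\}$ and smallest positive eigenvalue $(1-\beta)/\beta$. A standard recession-function argument shows that the bounded-argmin hypothesis on $\sum_{i\in\mc{V}} f_i$ is equivalent to $G$ being level-bounded, so $G$ attains its minimum and the fixed-point set of DGD-ATC is non-empty.

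Let $\bx^\star$ be any fixed point and $\mb{J}=\tfrac{1}{n}(\mb{1}_n\mb{1}_n^T)\otimes I_d$. Left-multiplying the optimality condition by $\mb{J}$ and using $\mb{J}(\mb{W}^{-1}-I)=\mathbf{0}$ yields $\sum_{i\in\mc{V}}\nabla f_i(x_i^\star)=0$. For any $x^{\operatorname{opt}}\in\arg\min_x\sum_i f_i(x)$, $G(\mb{1}_n\otimes x^{\operatorname{opt}})=F_{\operatorname{opt}}$, so optimality of $\bx^\star$ for $G$ combined with non-negativity of the penalty gives $F(\bx^\star)\le G(\bx^\star)\le F_{\operatorname{opt}}$. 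Left-multiplying the optimality condition by $I-\mb{J}$ and using $(\mb{W}^{-1}-I)\bar{\bx}^\star=\mathbf{0}$ gives $(\mb{W}^{-1}-I)(\bx^\star-\bar{\bx}^\star)=-\alpha\nabla f(\bx^\star)$; applying the smallest-positive-eigenvalue bound on $\operatorname{Range}(I-\mb{J})$ yields
\begin{equation*}
\|\bx^\star-\bar{\bx}^\star\|\le\frac{\beta}{1-\beta}\,\alpha\|\nabla f(\bx^\star)\|.
\end{equation*}

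The main obstacle is to bound $\|\nabla f(\bx^\star)\|$ by an $\alpha$-independent constant in order to upgrade the displayed inequality to the $O(\alpha/(1-\beta))$ rate of \eqref{eq:betaDGD}. My plan is first to derive the weaker bound $\|\bx^\star-\bar{\bx}^\star\|=O(\sqrt{\alpha/(1-\beta)})$ à la Lemma~\ref{lemma:DGDoptimalitygap}, using $G(\bx^\star)\le F_{\operatorname{opt}}$ together with $f\ge\inf f$ to upper-bound the penalty. Next, $\sum_i\nabla f_i(x_i^\star)=0$ and $L$-smoothness give $\|\sum_i\nabla f_i(\bar{x}^\star)\|\le L\sqrt{n}\,\|\bx^\star-\bar{\bx}^\star\|=O(\sqrt{\alpha})$, so $\bar{x}^\star$ is an approximate stationary point of $\sum_i f_i$; because $\sum_i f_i$ is proper, lsc, convex, with bounded argmin (equivalently, $0$ lies in the interior of the domain of its Legendre conjugate), its approximate stationary points remain in a bounded set, giving a uniform bound on $\|\nabla f(\bar{\bx}^\star)\|$. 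Substituting $\|\nabla f(\bx^\star)\|\le\|\nabla f(\bar{\bx}^\star)\|+L\|\bx^\star-\bar{\bx}^\star\|$ into the displayed inequality and absorbing the $L\|\bx^\star-\bar{\bx}^\star\|$ term for sufficiently small $\alpha$ then delivers \eqref{eq:betaDGD}. Finally, the $L$-smoothness of $f$ gives
\begin{equation*}
F(\bar{\bx}^\star)\le F(\bx^\star)+\|\nabla f(\bx^\star)\|\|\bar{\bx}^\star-\bx^\star\|+\tfrac{L}{2}\|\bar{\bx}^\star-\bx^\star\|^2,
\end{equation*}
and plugging in $F(\bx^\star)\le F_{\operatorname{opt}}$, the uniform bound on $\|\nabla f(\bx^\star)\|$, and \eqref{eq:betaDGD} yields \eqref{eq:funcave}.
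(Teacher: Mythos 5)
Your skeleton matches the paper's: both identify the fixed points of DGD-ATC with the minimizers of $f(\bx)+\tfrac{1}{2\alpha}\bx^T(\mb{W}^{-1}-I)\bx$, both get $F(\bx^\star)\le F_{\operatorname{opt}}$ from optimality plus nonnegativity of the penalty, and both reduce \eqref{eq:betaDGD} to an $\alpha$-independent bound on $\|\nabla f(\bx^\star)\|$. Your existence argument via recession functions is a legitimate alternative to the paper's explicit route (the paper shows the level set $\{\bx:\mc{L}_\alpha(\bx)\le\mc{L}_\alpha(\mb{1}_n\otimes z^\star)\}$ is compact by bounding $\|\bx-\bar\bx\|$ and then invoking boundedness of the level sets of $\sum_i f_i$); yours is arguably cleaner, the paper's is more self-contained and reuses estimates needed later anyway.

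Where you diverge substantively is the gradient bound, and there your route is both over-engineered and strictly weaker. The paper bounds $\|\nabla f(\bx^\star)\|$ in one line from the descent lemma: since each $f_i$ is $L_i$-smooth and bounded below by $\underline{f}$ (Assumption \ref{asm:prob} with $h_i\equiv 0$), $\|\nabla f_i(x_i^\star)\|^2\le 2L_i(f_i(x_i^\star)-\underline{f})$, and summing with $f(\bx^\star)\le F_{\operatorname{opt}}$ — which you already established — gives $\|\nabla f(\bx^\star)\|^2\le 2L(F_{\operatorname{opt}}-n\underline{f})$, uniformly in $\alpha$ and $\beta$. (This is the same device the paper uses in Case i) of Lemma \ref{lemma:stronggap}.) Your detour through $\sum_i\nabla f_i(x_i^\star)=0$, approximate stationarity of $\bar{x}^\star$, and local boundedness of $\partial\phi^*$ near $0$ is correct as convex analysis, but it only controls $\|\nabla f(\bx^\star)\|$ once $L\sqrt{n}\,\|\bx^\star-\bar{\bx}^\star\|=O(\sqrt{\alpha})$ is small enough for the gradient to land in $\operatorname{int}(\operatorname{dom}\phi^*)$, and your final absorption step additionally needs $\alpha\lesssim(1-\beta)/L$. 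So as written you prove \eqref{eq:betaDGD}--\eqref{eq:funcave} only for sufficiently small $\alpha$, whereas the lemma (and the paper's proof) gives explicit constants valid for every $\alpha>0$. Replacing that whole block with the descent-lemma bound closes the gap and collapses your argument onto the paper's.
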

	\begin{proof}
		See Appendix \ref{append:DGDATCoptimalitygap}.
	\end{proof}
	
	\subsection{Algorithm Description}
    The asynchronous DGD-ATC algorithm can be implemented in a similar setting as Algorithm \ref{alg:DGD} -- each node $i\in {\mathcal V}$ is activated at discrete points in time, does not need to be active to receive information from its neighbors, and uses a buffer ${\mathcal B}_i$ to store the information it receives from its neighbors between activation times. In the initialization, it shares $y_i=x_i-\alpha\nabla f_i(x_i)$ with all neighbors. When it is activated, it reads all $y_{ij}$ (the most recent $y_j$ it has received from neighboring node $j$) from the buffer, performs the update
	\begin{equation}\label{eq:ATCxiupdate}
	x_i \leftarrow w_{ii}y_i+\sum_{j\in \mc{N}_i} w_{ij}y_{ij},
	\end{equation}
	and broadcasts $y_i=x_i-\alpha \nabla f_i(x_i)$ to all $j\in\mc{N}_i$ before going back to sleep. Once a node $j\in\mc{N}_i$ receives $y_i$, it stores $y_i$ in its buffer $\mc{B}_j$; see the detailed implementation in Algorithm \ref{alg:DGD-ATC}.

 %
	
    \begin{algorithm}[t!]
        \makeatletter
        \renewcommand\footnoterule{%
            \kern-3\p@
        \hrule\@width.4\columnwidth
        \kern2.6\p@}
        \makeatother
        \caption{Asynchronous DGD-ATC}
		\label{alg:DGD-ATC}
        \begin{minipage}{\linewidth}
        \renewcommand{\thempfootnote}{\arabic{mpfootnote}}
        \begin{algorithmic}[1]
			\STATE {\bfseries Initialization:} All the nodes agree on $\alpha>0$, and cooperatively set $w_{ij}$ $\forall \{i,j\}\in\mc{E}$.
			\STATE Each node $i\in\mc{V}$ chooses $x_i\in\mathbb{R}^d$, creates a local buffer $\mc{B}_i$, sets $y_i=x_i-\alpha\nabla f_i(x_i)$ and shares it with all $j\in\mc{N}_i$.
			\FOR{each node $i\in \mc{V}$}
			\STATE keep \emph{receiving $y_j$ from neighbors and store $y_j$ in $\mc{B}_i$ until it is activated}\footnote{In the first iteration, each node $i\in\mc{V}$ can be activated only after it received $y_j$ from all $j\in\mc{N}_i$.}. If for some $j\in\mc{N}_i$, node $i$ receives multiple $y_j$'s, then store the most recently received one.
			\STATE set $y_{ij}=y_j$ for all $y_j\in\mc{B}_i$.
			\STATE empty $\mc{B}_i$.
			\STATE update $x_i$ by \eqref{eq:ATCxiupdate}.
			\STATE set $y_i=x_i-\alpha\nabla f_i(x_i)$.
			\STATE share $y_i$ with all neighbors $j\in\mc{N}_i$.
			\ENDFOR
			\STATE \textbf{Until} a termination criterion is met.
		\end{algorithmic}
        \end{minipage}
	\end{algorithm}
	
	Note that each $y_{ij}$ in \eqref{eq:ATCxiupdate} is a delayed $x_j-\alpha \nabla f_j(x_j)$. Then, similar to \eqref{eq:asyDGDupdateindex}, the asynchronous DGD-ATC can be described as follows. For each $i\in\mc{V}$ and $k\in\N_0$,
	\begin{equation}\label{eq:ATCupdateindex}
	x_i^{k+1} \!\!=\! \begin{cases}
	\sum_{j\in\bar{\mc{N}_i}} w_{ij}(x_j^{s_{ij}^k}\!-\!\alpha\nabla f_j(x_j^{s_{ij}^k})), & k\in \mc{K}_i,\\
	x_i^k, &\! \text{otherwise},
	\end{cases}
	\end{equation}
	where $k\in\N_0$ is the iteration index, $\mc{K}_i\subseteq \N_0$ denotes the set of iterations where node $i$ updates $x_i$, $s_{ij}^k\in [0, k]$, $j\in\mc{N}_i$ is the iteration index of the most recent $y_j$ that node $i$ has received from $j$, and $s_{ii}^k=k$. When $\mc{K}_i=\N_0$ $\forall i\in\mc{V}$ and $s_{ij}^k=k$ $\forall \{i,j\}\in\mc{E}, \forall k\in\N_0$, the asynchronous DGD-ATC reduces to the synchronous DGD-ATC.
	
	\subsection{Convergence Analysis}\label{ssec:convrateATC}

        We show similar convergence results for the asynchronous DGD-ATC as Theorems \ref{thm:total}, \ref{thm:partial}, and \ref{thm:adapt}. In addition to being an averaging matrix, this subsection assumes $W$ to be positive definite, which can be satisfied by letting $W = \frac{W'+I}{2}$ for any averaging matrix $W'$ associated with $\mc{G}$ \cite[section 2.3]{shi2015extra}.
        
	\begin{theorem}[total asynchrony]\label{thm:totalATC}
		Suppose that all the conditions in Lemma \ref{lemma:DGDATCoptimalitygap} and Assumptions \ref{asm:totalasynchrony}, \ref{asm:strongconvexity} hold and    \begin{equation}\label{eq:stepsizecondATC}
		\alpha\in\left(0,\frac{2}{\max_{i\in\mc{V}} L_i}\right).
		\end{equation}
		Then, $\{\bx^k\}$ generated by the asynchronous DGD-ATC converges to some fixed point of its synchronous counterpart. 
	\end{theorem}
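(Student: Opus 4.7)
The plan is to recast the asynchronous DGD-ATC iteration in the framework of asynchronous fixed-point iterations of Bertsekas and Tsitsiklis \cite{bertsekas2015parallel} and invoke their totally asynchronous convergence theorem. Define the operator $T : \mathbb{R}^{nd} \to \mathbb{R}^{nd}$ block-wise by
\begin{equation*}
T_i(\bx) = \sum_{j \in \bar{\mc{N}}_i} w_{ij}\bigl(x_j - \alpha \nabla f_j(x_j)\bigr), \quad i \in \mc{V},
\end{equation*}
so that the synchronous DGD-ATC recursion \eqref{eq:DGD-ATC} reads $\bx^{k+1}=T(\bx^k)$, while \eqref{eq:ATCupdateindex} is precisely an asynchronous iteration for $T$ in which the $j$-th component read by node $i$ is delayed to $s_{ij}^k$.

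The core step is to show that $T$ is a contraction with respect to the block-max norm $\|\cdot\|_{\infty}^b$. For each $j \in \mc{V}$, the classical one-step gradient-descent estimate for $\mu_j$-strongly convex and $L_j$-smooth functions gives $\|\,(x_j - \alpha \nabla f_j(x_j)) - (y_j - \alpha \nabla f_j(y_j))\,\| \le c_j\,\|x_j - y_j\|$ with $c_j = \max\{|1-\alpha\mu_j|,|1-\alpha L_j|\} < 1$ under \eqref{eq:stepsizecondATC}. Set $c = \max_{j\in\mc{V}} c_j < 1$. Since $W$ is row-stochastic ($\sum_j w_{ij}=1$), the triangle inequality yields
\begin{equation*}
\|T_i(\bx)-T_i(\by)\| \le \sum_{j\in\bar{\mc{N}}_i} w_{ij}\,c_j\,\|x_j-y_j\| \le c\,\|\bx-\by\|_{\infty}^b,
\end{equation*}
and taking a maximum over $i$ gives $\|T(\bx)-T(\by)\|_{\infty}^b \le c\,\|\bx-\by\|_{\infty}^b$. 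By Lemma \ref{lemma:DGDATCoptimalitygap}, $T$ has a fixed point $\bx^\star$, and the contraction forces it to be unique.

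With the block-max-norm contraction in hand, I would then build the standard nested boxes $X^k = \{\bx : \|\bx-\bx^\star\|_{\infty}^b \le c^k R\}$ for a sufficiently large $R$ so that $\bx^0\in X^0$. These satisfy $X^{k+1}\subseteq X^k$, $T(X^k)\subseteq X^{k+1}$, and $\bigcap_k X^k = \{\bx^\star\}$, which together with Assumption \ref{asm:totalasynchrony} verifies the hypotheses of the asynchronous convergence theorem (e.g., \cite[Prop.~2.1, Ch.~6]{bertsekas2015parallel}); the conclusion $\bx^k \to \bx^\star$ follows.

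The main obstacle is obtaining the contraction in the block-max norm rather than in the Euclidean norm: a naive $\ell_2$ argument mixes the local contraction factors through $\mathbf{W}$ and loses the crucial row-stochasticity cancellation, whereas the block-max norm together with $\sum_j w_{ij}=1$ delivers a clean rate $c<1$ that is independent of the delay pattern, which is exactly what the totally asynchronous framework of Bertsekas and Tsitsiklis demands.
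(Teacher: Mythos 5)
Your proposal is correct and follows essentially the same route as the paper: both recast the asynchronous DGD-ATC as an asynchronous fixed-point iteration for $\T(\bx)=\mb{W}(\bx-\alpha\nabla f(\bx))$, establish a contraction in the block-maximum norm by combining the row-stochasticity of $W$ with the standard one-step contraction of the gradient map under strong convexity and \eqref{eq:stepsizecondATC}, and then invoke the totally asynchronous convergence theorem of Bertsekas and Tsitsiklis (the paper cites the equivalent result of Feyzmahdavian et al., whose nested-box construction is exactly the one you describe). The only cosmetic differences are that the paper bounds $\|\T_i(\bx)-x_i^\star\|^2$ via Jensen's inequality with the factor $\hat{\rho}^2=1-\alpha\min_i\mu_i(2-\alpha L_i)$ and only needs pseudo-contractivity with respect to fixed points, whereas you use the triangle inequality with the (slightly sharper) factor $\max_j\max\{|1-\alpha\mu_j|,|1-\alpha L_j|\}$ and prove a full contraction.
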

	\begin{proof}
		See Appendix \ref{append:thmtotalATC}.
	\end{proof}

	\begin{theorem}[partial asynchrony]\label{thm:partialATC}
		Suppose that all the conditions in Lemma \ref{lemma:DGDATCoptimalitygap}, Assumption \ref{asm:partialasynchrony}, and the step-size condition \eqref{eq:stepsizecondATC} hold. Then, $\{\bx^k\}$ generated by the asynchronous DGD-ATC converges to some fixed point $\bx^\star$ of its synchronous counterpart. If, in addition, Assumption \ref{asm:strongconvexity} holds, then
		\begin{equation*}
		\|\mathbf{x}^k-\mathbf{x}^\star\|_{\infty}^b\le \hat{\rho}^{\lfloor k/(B+D+1)\rfloor}\|\mathbf{x}^0-\mathbf{x}^\star\|_{\infty}^b,
		\end{equation*}
		where
		\begin{align}
		\hat{\rho}=\sqrt{1-\alpha\min_{i\in\mc{V}}\left(\mu_i(2-\alpha L_i)\right)}\in(0,1).\label{eq:rhodgdatc}
		\end{align}
	\end{theorem}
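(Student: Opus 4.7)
The plan is to adapt the block-maximum-norm framework used for Theorem~\ref{thm:partial} to DGD-ATC. Define the block operator $T:\mathbb{R}^{nd}\to\mathbb{R}^{nd}$ coordinate-wise by $T_i(\bx)=\sum_{j\in\bar{\mc{N}}_i} w_{ij}(x_j-\alpha\nabla f_j(x_j))$, so that synchronous DGD-ATC reads $\bx^{k+1}=T(\bx^k)$ and the asynchronous iteration~\eqref{eq:ATCupdateindex} evaluates each $T_i$ on delayed inputs. By Lemma~\ref{lemma:DGDATCoptimalitygap}, the fixed-point set of $T$ is non-empty.

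Under strong convexity (Assumption~\ref{asm:strongconvexity}), the first goal is to show that $T$ is a contraction in $\|\cdot\|_\infty^b$. For every $j\in\mc{V}$, expanding the square and using $\mu_j$-strong convexity of $f_j$ together with the co-coercivity inequality $\|\nabla f_j(x)-\nabla f_j(y)\|^2\le L_j\langle\nabla f_j(x)-\nabla f_j(y),x-y\rangle$ yields $\|(x_j-\alpha\nabla f_j(x_j))-(y_j-\alpha\nabla f_j(y_j))\|^2 \le (1-\alpha\mu_j(2-\alpha L_j))\|x_j-y_j\|^2$, which is strictly less than $\|x_j-y_j\|^2$ under the step-size condition~\eqref{eq:stepsizecondATC}. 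Since $W$ is row-stochastic, applying the triangle inequality to $T_i(\bx)-x_i^\star=\sum_{j\in\bar{\mc{N}}_i}w_{ij}[(x_j-\alpha\nabla f_j(x_j))-(x_j^\star-\alpha\nabla f_j(x_j^\star))]$ and inserting the preceding bound gives $\|T_i(\bx)-x_i^\star\|\le\hat{\rho}\|\bx-\bx^\star\|_\infty^b$ for every $i\in\mc{V}$, hence $\|T(\bx)-\bx^\star\|_\infty^b\le\hat{\rho}\|\bx-\bx^\star\|_\infty^b$ with $\hat{\rho}<1$. Combining this block-max contraction with the uniform bounds $B,D$ from Assumption~\ref{asm:partialasynchrony} and invoking the asynchronous fixed-point theorem of Bertsekas and Tsitsiklis~\cite{bertsekas2015parallel} with the nested sublevel sets $X(m)=\{\bx:\|\bx-\bx^\star\|_\infty^b\le\hat{\rho}^m\|\bx^0-\bx^\star\|_\infty^b\}$ yields the claimed rate, since one level is gained every $B+D+1$ iterations.

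The merely convex case is the main obstacle: then $\hat{\rho}=1$ and $T$ is only non-expansive in $\|\cdot\|_\infty^b$. Here the plan is to exploit the fact that, for $\alpha\in(0,2/L_j)$, the gradient step $x_j\mapsto x_j-\alpha\nabla f_j(x_j)$ is $(\alpha L_j/2)$-averaged, which provides an additional dissipation term $-\alpha(2/L_j-\alpha)\|\nabla f_j(x_j)-\nabla f_j(x_j^\star)\|^2$ beyond plain non-expansiveness. Aggregating this inequality through the row-stochastic $W$ yields Fej\'er monotonicity of $\{\bx^k\}$ with respect to the fixed-point set of $T$, together with summability of $\|\nabla f_j(x_j^{t_j^k})-\nabla f_j(x_j^\star)\|^2$. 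The difficulty is that asynchrony obstructs a direct Fej\'er-monotonicity argument: the partial-asynchrony bounds $B$ and $D$ are essential to control the gap between the delayed arguments of $T_i$ and the current iterate, to convert the synchronous per-block descent into a global Lyapunov decrease, to extract a bounded convergent subsequence, and finally to upgrade subsequential convergence to convergence of the whole sequence via the standard uniqueness-of-cluster-point argument for Fej\'er-monotone sequences.
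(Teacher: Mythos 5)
Your treatment of the strongly convex case matches the paper's: you establish the block-maximum-norm contraction $\|\T(\bx)-\bx^\star\|_{\infty}^b\le\hat{\rho}\|\bx-\bx^\star\|_{\infty}^b$ essentially as in \eqref{eq:nonexpansiveCAA}--\eqref{eq:linearATC} (triangle inequality versus Jensen on the norm square is immaterial) and then invoke the asynchronous contraction theorem with nested level sets, which is exactly what the paper does via Lemma~\ref{lemma:partial}. That half is fine.

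The weakly convex case is where your proposal has a genuine gap. You correctly flag it as the main obstacle, but the route you sketch --- averagedness of $x_j\mapsto x_j-\alpha\nabla f_j(x_j)$, Fej\'er monotonicity, summability of gradient residuals, and a ``standard uniqueness-of-cluster-point argument'' --- is not carried out, and the missing steps are precisely the hard ones. Under asynchrony the distance $\|\bx^k-\bx^\star\|_{\infty}^b$ is not monotone per iteration (only a delayed running maximum over a window of length $\tau^k$ is), the block-maximum norm is not strictly convex, so non-expansiveness plus summable dissipation does not by itself single out one limit point when the fixed-point set is a continuum, and the dissipation you extract controls only the gradient maps, not the averaging part of the update that must also settle. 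The paper closes exactly this gap by verifying conditions (c) and (d) of Proposition~\ref{proposition:copyBertsekas} (\cite[Proposition 7.2.3]{bertsekas2015parallel}): condition (d) is an equality-case analysis showing that $\|\T_i(\bx)-x_i^\star\|=\|\bx-\bx^\star\|_{\infty}^b$ forces $\T_i(\bx)=x_i$ (via the convex-combination equality case of Lemma~\ref{lemma:equav} and the equality case of co-coercivity), and condition (c) propagates the set of maximum-distance blocks through the connected graph to show that any non-fixed point has a strictly improving block. Without an argument of this type, or a completed delay-aware Fej\'er argument in a norm where cluster points can actually be identified, the asymptotic convergence claim for merely convex $f_i$ is not established.
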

	\begin{proof}
		See Appendix \ref{append:thmpartialATC}.
	\end{proof}

	\begin{theorem}\label{thm:adaptATC}
		Suppose that all the conditions in Lemma \ref{lemma:DGDATCoptimalitygap}, Assumptions \ref{asm:strongconvexity}, \ref{asm:partialasynchrony}, and the step-size condition \eqref{eq:stepsizecondATC} hold. Let $\{\bx^k\}$ be generated by the asynchronous DGD-ATC. If $k\ge k^m$ for some $m\in\N_0$, then
		\begin{equation*}
		\|\mathbf{x}^k-\mathbf{x}^\star\|_{\infty}^b\le \hat{\rho}^m\|\mathbf{x}^0-\mathbf{x}^\star\|_{\infty}^b.
		\end{equation*}
	\end{theorem}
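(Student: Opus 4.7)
The plan is to follow the blueprint of the proof of Theorem \ref{thm:adapt}, substituting the Prox-DGD-specific contraction with one tailored to DGD-ATC. First, I would establish the per-node gradient-step contraction: for each $j\in\mc{V}$, since $f_j$ is $\mu_j$-strongly convex and $L_j$-smooth and $\alpha\in(0, 2/\max_i L_i)$ by \eqref{eq:stepsizecondATC}, a standard expansion of $\|(y-\alpha\nabla f_j(y))-(x_j^\star-\alpha\nabla f_j(x_j^\star))\|^2$ together with co-coercivity yields the bound
\begin{equation*}
\|(y-\alpha\nabla f_j(y))-(x_j^\star-\alpha\nabla f_j(x_j^\star))\|\le \hat\rho\,\|y-x_j^\star\|,\quad \forall y\in\mathbb{R}^d,
\end{equation*}
where $\hat\rho$ is as in \eqref{eq:rhodgdatc}. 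This inequality should already appear in the proof of Theorem \ref{thm:partialATC}, so I would import it directly.

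Next, I would use the unified representation \eqref{eq:eachblocks} of $x_i^{k+1}$ (valid whether or not $k\in\mc{K}_i$, once Assumption \ref{asm:partialasynchrony} ensures $t_i^k$ is well defined), subtract the DGD-ATC fixed-point identity $x_i^\star=\sum_{j\in\bar{\mc{N}}_i}w_{ij}(x_j^\star-\alpha\nabla f_j(x_j^\star))$, apply the triangle inequality with the non-negative stochastic weights $w_{ij}$, and plug in the per-node contraction to obtain the per-block bound
\begin{equation*}
\|x_i^{k+1}-x_i^\star\|\le \hat\rho\max_{j\in\bar{\mc{N}}_i}\|x_j^{s_{ij}^{t_i^k}}-x_j^\star\|.
\end{equation*}

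I would then induct on $m$ using the Markov-like property of $\{k^m\}$. For the base case $m=0$, combining the per-block bound with $\hat\rho<1$ and a standard induction on $k$ shows $\|\bx^k-\bx^\star\|_\infty^b\le\|\bx^0-\bx^\star\|_\infty^b$ for all $k\ge 0$. For the inductive step, assume $\|\bx^s-\bx^\star\|_\infty^b\le\hat\rho^m\|\bx^0-\bx^\star\|_\infty^b$ for every $s\ge k^m$. For any $k\ge k^{m+1}$, the definition \eqref{eq:defkm} guarantees $s_{ij}^{t_i^{k-1}}\ge (k-1)-\tau^{k-1}\ge k^m$ for every $i$ and $j\in\bar{\mc{N}}_i$, so the per-block bound together with the induction hypothesis yields $\|x_i^k-x_i^\star\|\le\hat\rho\cdot\hat\rho^m\|\bx^0-\bx^\star\|_\infty^b$; taking the maximum over $i$ closes the induction.

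The main subtlety is in handling blocks that are inactive at iteration $k-1$, for which $x_i^k$ is a stale value originally computed at some earlier iteration $t_i^{k-1}$ using indices $s_{ij}^{t_i^{k-1}}$ that do not appear explicitly in iteration $k$. One must verify that these hidden indices are still bounded below by $k^m$. This is exactly what \eqref{eq:defkm} secures, since $\tau^t$ is defined via a maximum over \emph{all} blocks—inactive ones included—so the Markov property carries over uniformly and the inductive step goes through without modification.
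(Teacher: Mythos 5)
Your proposal is correct and follows essentially the same route as the paper: the per-block contraction you derive is exactly the pseudo-contractivity bound \eqref{eq:linearATC} applied to the delayed iterates, your per-block recursion is the inequality $\|\mathbf{x}^{k+1}-\mathbf{x}^\star\|_{\infty}^b\le \hat{\rho}\max_{k-\tau^k\le t\le k}\|\mathbf{x}^t-\mathbf{x}^\star\|_{\infty}^b$ that the paper imports from the proof of Theorem 24 in \cite{Feyzmahdavian23}, and your induction on $m$ via the Markov property of $\{k^m\}$ is what the paper compresses into ``together with the definition of $\{k^m\}$''. Your explicit treatment of the inactive blocks through $t_i^k$ and the definition of $\tau^k$ is exactly the right justification, just spelled out rather than cited.
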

        \begin{proof}
            See Appendix \ref{append:thmadaptATC}.
        \end{proof}
        
        Lemma \ref{lemma:DGDATCoptimalitygap} and Theorems \ref{thm:totalATC}--\ref{thm:adaptATC} indicate that the asynchronous DGD-ATC has the same error bounds as the asynchronous Prox-DGD on smooth problems, but it may converge faster. Since $w_{ii}\in (0,1)$, the asynchronous DGD-ATC allows for larger stepsizes \eqref{eq:stepsizecondATC} than the asynchronous Prox-DGD \eqref{eq:stepsizecond}. A larger step-size $\alpha$ tends to lead to faster convergence in practice. Indeed, even when using the same $\alpha$,  asynchronous DGD-ATC has a stronger convergence guarantee  since $w_{ii}\in (0,1)$ implies that $\hat{\rho}\le \rho$. 
        With the popular Metropolis averaging matrix~\cite{nedic17} where, $w_{ii}=1-\sum_{j\in\mc{N}_i} \frac{1}{\max(|\mc{N}_i|, |\mc{N}_j|)+1}$, the advantage of the asynchronous DGD-ATC over Prox-DGD becomes stronger in  denser networks.


        The comparison of the asynchronous DGD-ATC with the literature follows that of the asynchronous Prox-DGD in Section \ref{sssec:comparison}), with the exception that the former cannot solve non-smooth problems. We omit the comparison for brevity.



	\section{Numerical Experiments}\label{sec:exp}

    We evaluate the practical performance of the asynchronous Prox-DGD and DGD-ATC on decentralized training. Each node has its own local data set and collaborates to train a common classifier using logistic regression with an elastic-net regularization. This problem is on the form~\eqref{eq:consensusprob} with 
    \begin{align*}
        f_i(x) &=\frac{1}{|\mc{D}_i|}\sum_{j\in \mc{D}_i} \left(\log(1+e^{-b_j(a_j^Tx)})+\frac{\lambda_2}{2}\|x\|^2\right),\\
        h_i(x) &=\lambda_1\|x\|_1,
    \end{align*}
    where $\mc{D}_i$ represents the set of samples held by node $i$, $a_j$ is the feature of the $j$th sample, $b_j$ is its label, and $\lambda_1,\lambda_2$ are regularization parameters. We set $\lambda_2=10^{-3}$ and consider two values $10^{-3}$ and $0$ for $\lambda_1$ to simulate non-smooth and smooth problems, respectively. The experiments use the training sets of Covertype \cite{Dua:2019} that includes $581012$ samples with feature dimension $54$.

    We compare our algorithms with their synchronous counterparts and the asynchronous PG-EXTRA \cite{wu2017decentralized}. We do not compare with the algorithms in~\cite{zhang2019asyspa,doan2017convergence,kungurtsev2023decentralized,assran2020asynchronous,bianchi2021distributed,zhang2019fully,tian2020achieving} because they require either Lipschitz continuous objective functions \cite{zhang2019asyspa,doan2017convergence,kungurtsev2023decentralized} that does not hold for the simulated problem or negligibly small step-sizes ($<10^{-5}$ in our setting) \cite{assran2020asynchronous,zhang2019fully,tian2020achieving,bianchi2021distributed}.

    We set $n=16$, partition and allocate the data sets uniformly to each node, and implement all the methods on a multi-thread computer using the message-passing framework MPI4py \cite{dalcin2008mpi} where each thread serves as a node. Different from experiments in existing work, that use theoretical models of the computation and communication time \cite{bianchi2021distributed,wu2017decentralized,tian2020achieving} or delays \cite{doan2017convergence,zhang2019asyspa}, the timing in our experiments are generated from real computations and communications of the threads. The communication graph $\mc{G}$ has $20$ links and is connected, with a randomly generated topology. Each node $i\in\mc{V}$ is activated once its buffer $\mc{B}_i$ includes messages from at least $|\mc{N}_i|-1$ neighbours. {\color{black} We consider both homogeneous and heterogeneous node scenarios: In the former, the nodes have similar computation ability (each node is a thread and we do not slow it down), and in the later we manually slow down node $1$ by letting it sleep twice the time required to compute  the local gradient at each iteration.}

    {\color{black}We consider both theoretical and hand-tuned step-sizes for all methods. For theoretical step-sizes, we set $\alpha=\min_{i\in\mc{V}} (w_{ii}/L_i)$ in the asynchronous and synchronous Prox-DGD and $\alpha=1/\max_{i\in\mc{V}} L_i$ in the asynchronous and synchronous DGD-ATC, which satisfy the conditions in Theorems \ref{thm:total}--\ref{thm:partial}. We also simulate the adaptive back-tracking step-size ($c=0.8$) below Theorem \ref{thm:adapt_step} and for both $\alpha=\min_{i\in\mc{V}} (w_{ii}/L_i)$ and hand-tuned $\alpha$. We fine-tune the parameters of asynchronous PG-EXTRA within their theoretical ranges. The theoretical ranges involve the maximum delay which is set to the maximum observed delay ($148$) during a 10-second run of the method. As illustrated above Assumption \ref{asm:totalasynchrony}, if a node $i$ updates at the $k$th iteration, then we call $k-s_{ij}^k$, $j\in\bar{\mc{N}}_i$ delay where $s_{ij}^k$ is defined below \eqref{eq:asyDGDupdateindex}. In the setting of hand-tuned step-sizes, we ignore the theoretical step-size ranges and tune them freely to achieve as fast as possible convergence.}  

\begin{figure*}
\centering
\includegraphics[width=0.8\linewidth]{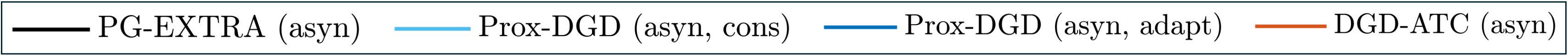}
\vspace{-0.4cm}

\end{figure*}

\begin{figure*}[!htb]
		\centering
		\subfigure[homo nodes, theo step]{\includegraphics[width=0.23\linewidth]{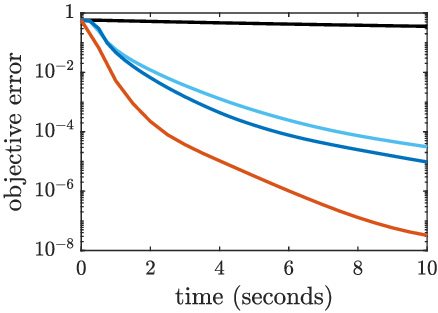}}
		\subfigure[homo nodes, tuned step]{
			\includegraphics[width=0.22\linewidth]{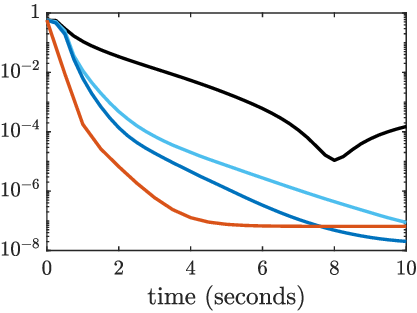}}
        \subfigure[heter nodes, theo step]{
			\includegraphics[width=0.22\linewidth]{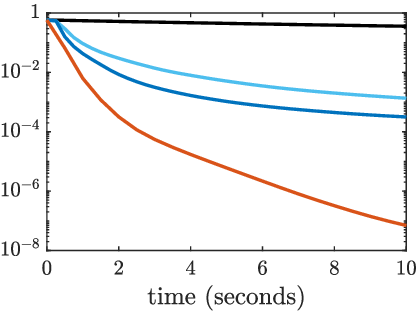}}
        \subfigure[heter nodes, tuned step]{\includegraphics[width=0.22\linewidth]{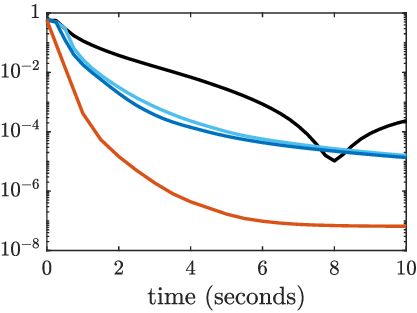}}
       \caption{Smooth problems ($\lambda_1=0$): comparison among asynchronous methods.}
		\label{fig:smooth}
	\end{figure*}

\begin{figure*}
\centering
\includegraphics[width=0.8\linewidth]{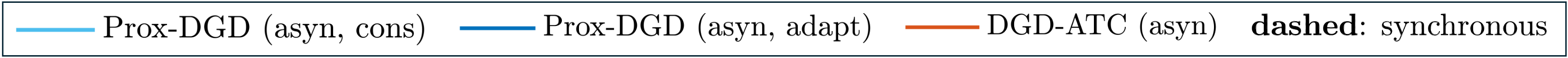}
\vspace{-0.4cm}

\end{figure*}

  \begin{figure*}[!htb]
		\centering
		\subfigure[run time, theo step]{\includegraphics[width=0.23\linewidth]{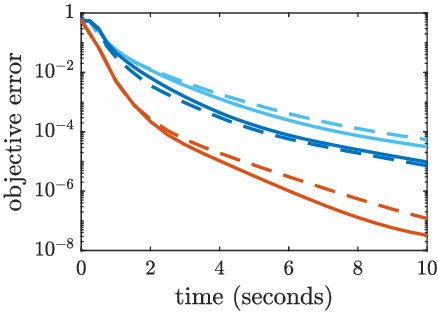}}
		\subfigure[communication, theo step]{
			\includegraphics[width=0.22\linewidth]{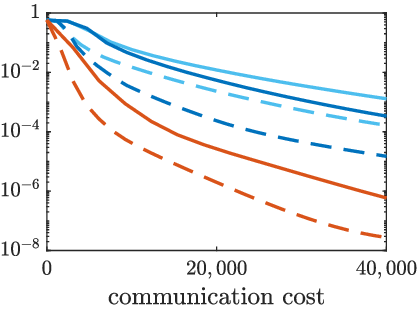}}
        \subfigure[run time, tuned step]{
			\includegraphics[width=0.22\linewidth]{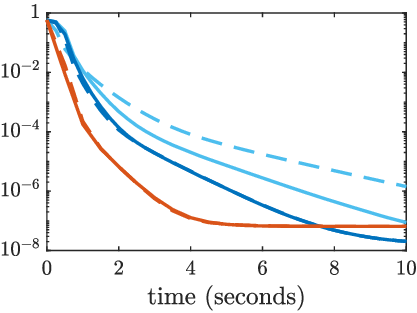}}
        \subfigure[communication, tuned step]{\includegraphics[width=0.22\linewidth]{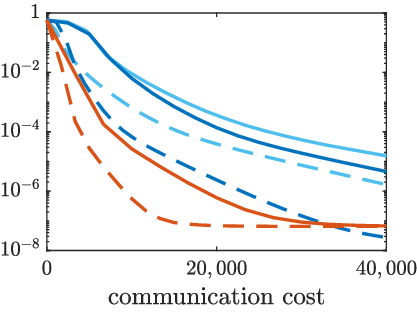}}
       \caption{Smooth problem ($\lambda_1=0$): comparison among asynchronous and synchronous Prox-DGD and DGD-ATC.}
		\label{fig:smooth-syn}
	\end{figure*}

\begin{figure*}
\centering
\includegraphics[width=0.65\linewidth]{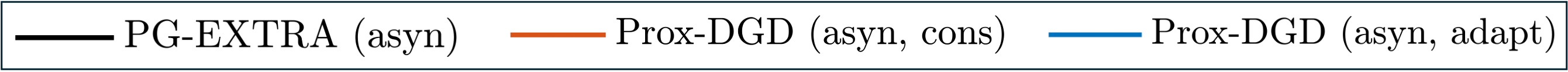}
\vspace{-0.4cm}

\end{figure*}

 	\begin{figure*}[!htb]
		\centering
		\subfigure[homo nodes, theo step]{\includegraphics[width=0.23\linewidth]{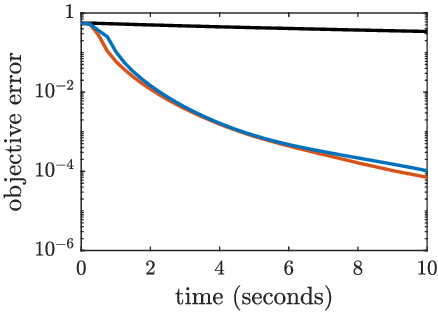}}
		\subfigure[homo nodes, tuned step]{
			\includegraphics[width=0.22\linewidth]{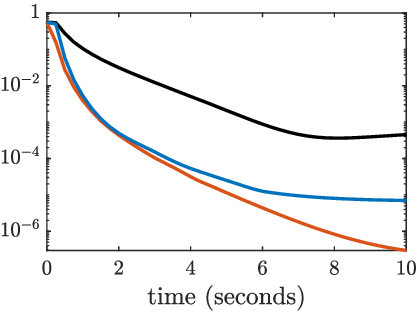}}
        \subfigure[heter nodes, theo step]{
			\includegraphics[width=0.22\linewidth]{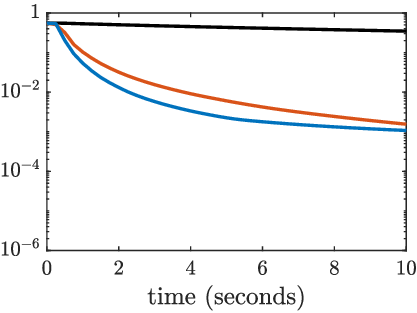}}
        \subfigure[heter nodes, tuned step]{\includegraphics[width=0.22\linewidth]{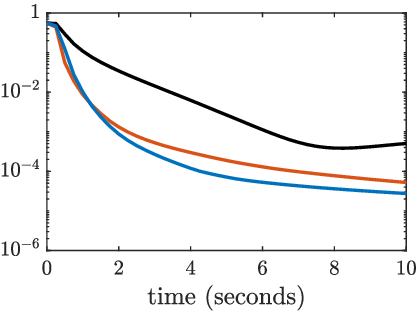}}
       \caption{Non-smooth problem ($\lambda_1=0$): comparison among asynchronous methods.}
		\label{fig:non-smooth}
	\end{figure*}

\begin{figure*}
\centering
\includegraphics[width=0.65\linewidth]{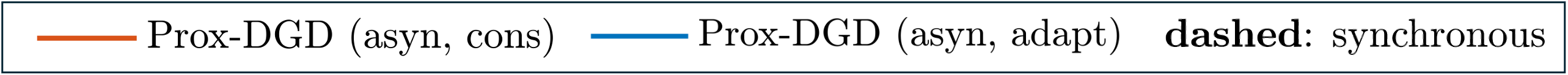}
\vspace{-0.4cm}

\end{figure*}

 	\begin{figure*}[!htb]
		\centering
		\subfigure[run time, theo step]{\includegraphics[width=0.23\linewidth]{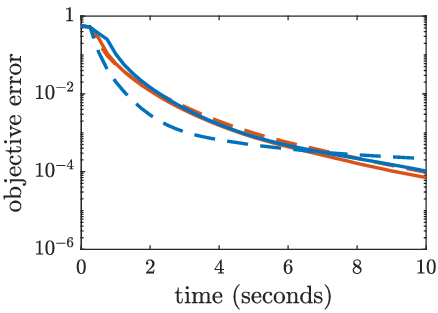}}
		\subfigure[communication, theo step]{
			\includegraphics[width=0.22\linewidth]{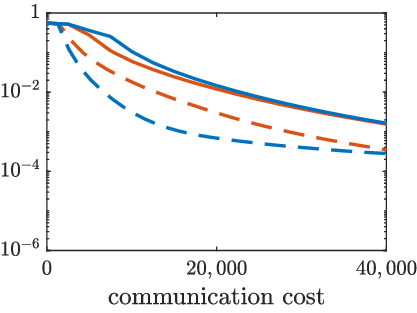}}
        \subfigure[run time, tuned step]{
			\includegraphics[width=0.22\linewidth]{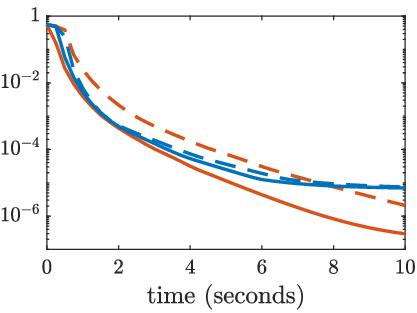}}
        \subfigure[communication, tuned step]{\includegraphics[width=0.22\linewidth]{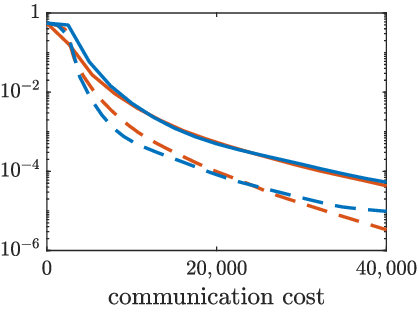}}
       \caption{Non-smooth problem ($\lambda_1>0$): comparison among asynchronous and synchronous Prox-DGD.}
		\label{fig:non-smooth-syn}
	\end{figure*}

	\begin{figure}
        \centering
        \includegraphics[scale=0.55]{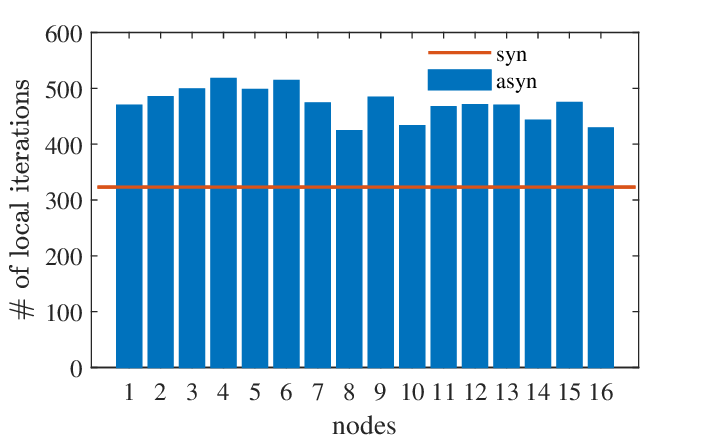}
        \caption{Number of local iterations in asynchronous and synchronous Prox-DGD (homogeneous nodes)}
        \label{fig:local_iters}
        \end{figure}

    We run all methods for $10$ seconds and plot the training error $f(\bar{x}(t))-f^\star$ at the average iterate $\bar{x}(t)=\frac{1}{n}\sum_{i=1}^n x_i(t)$ in Figs. \ref{fig:smooth}--\ref{fig:non-smooth-syn}, where $x_i(t)$ is the value of $x_i$ at time $t$ and $f^\star$ is the optimal value of \eqref{eq:consensusprob}. {\color{black}In these figures, ``syn", ``asy", ``cons", ``adapt", ``theo step", ``tuned step", ``homo nodes", and ``heter nodes" represent ``synchronous implementation", ``asynchronous implementation", ``constant step-size", ``adaptive step-size using back-tracking", ``theoretical step-size", ``hand-tuned step-size", ``homogeneous nodes", and ``heterogeneous nodes", respectively.}
    
    From these figures, we make the following observations. {\color{black}First, from Figures \ref{fig:smooth}--\ref{fig:smooth-syn}, the asynchronous DGD-ATC outperforms the asynchronous Prox-DGD when solving the smooth problem, which is in agreement with our theory, as discussed below Theorem \ref{thm:adaptATC}. Second, from Figures \ref{fig:smooth} and \ref{fig:non-smooth}, in all scenarios (homogeneous and heterogeneous nodes, smooth and non-smooth problems) the asynchronous Prox-DGD converges faster than the asynchronous PG-EXTRA. In the setting of theoretical step-sizes, it is because the parameters of asynchronous PG-EXTRA are conservative due to the large maximum delay, while the asynchronous Prox-DGD allows for much more relaxed delay-free parameters. In the setting of hand-tuned step-sizes, the best hand-tuned delay-dependent step-size of the asynchronous PG-EXTRA (two step-size parameters, one delay-free and one delay-dependent) is still not large. Specifically, in our experiments it is larger ($2 - 4$ times) than its theoretically maximum value but is still much smaller ($6 - 13$ times) than the theoretically maximum step-size of its synchronous counterpart, and simply increasing it leads to either slow convergence or divergence. Third, from Figures \ref{fig:smooth-syn} and \ref{fig:non-smooth-syn}, the asynchronous Prox-DGD and DGD-ATC are comparable with or converge faster than than their synchronous counterparts in terms of run time. This is a big improvement over existing asynchronous methods with fixed step-sizes since most of them are much slower than their synchronous counterparts in practical implementations, especially when using theoretically justified step-sizes. Fig. \ref{fig:local_iters} shows the number of local iterations per node over a 10-second run of both asynchronous and synchronous Prox-DGD. The results indicate that, on average, the asynchronous implementation performs approximately 1.46 times more local iterations than the synchronous implementation. In terms of communication cost, the asynchronous methods are slower which is also reasonable since the asynchronous updates include delayed information that slows down the convergence.} 
\section{Conclusion}
	
	We have investigated the asynchronous version of two distributed algorithms, Prox-DGD and DGD-ATC, for solving consensus optimization problems. We analysed the optimality gap of the synchronous Prox-DGD and DGD-ATC, and proved that their asynchronous variants can converge to the  fixed point set of the synchronous counterparts under \emph{delay-free} parameter conditions. 
    We also show that the convergence of both asynchronous algorithms  adjusts to the actual level of asynchrony and is not determined by the worst-case. Finally, we demonstrated superior practical convergence of the two asynchronous algorithms in implementations on a multi-core computer. {\color{black}Future works include extensions to stochastic optimization problems and directed networks \cite{xin2019distributed,qureshi2021s}.} 

	\appendix
	\subsection{Assumption \ref{asm:optsolexist} holds for quadratic $f_i$ and zero $h_i$}\label{append:qpoptsolexist}
	
	Let $A = \operatorname{diag}(A_1,\ldots,A_n)$ and $b=(b_1^T,\ldots,b_n^T)^T$. Because $f_i(x)=\|A_ix-b_i\|^2$ and $h_i\equiv 0$, we have $F(\bx) = \|A\bx-b\|^2$, so that
	\begin{equation*}
	F(\bx)+\frac{1}{2\alpha}\bx^T(I-\mb{W})\bx = \|A\bx-b\|^2+\frac{1}{2\alpha}\bx^T(I-\mb{W})\bx.
	\end{equation*}
	Minimizing the above function is equivalent to solving $(2A^TA+(I-\mb{W})/\alpha)\bx = 2A^Tb$, which has a solution if
 \begin{equation}\label{eq:Atbinrange}
     A^Tb \in\operatorname{Range}(2A^TA+(I-\mb{W})/\alpha).
 \end{equation}
Equation \eqref{eq:Atbinrange} indeed holds and, because $A^Tb\in\operatorname{Range}(A^TA)$, it can be proved by showing
\begin{equation}\label{eq:rangesubset}
    \operatorname{Range}(A^TA) \subseteq \operatorname{Range}(2A^TA+(I-\mb{W})/\alpha).
\end{equation}
Moreover, because the range of a real symmetric matrix is the orthogonal complement of its null space, \eqref{eq:rangesubset} is equivalent to
\begin{equation}\label{eq:nullsubset}
    \operatorname{Null}(2A^TA+(I-\mb{W})/\alpha)\subseteq \operatorname{Null}(A^TA).
\end{equation}
For any $\bx\in \operatorname{Null}(2A^TA+(I-\mb{W})/\alpha)$, because $I-\mb{W}\succeq \mb{0}$,
\begin{equation}
    \bx^TA^TA\bx\le \bx^T(2A^TA+(I-\mb{W})/\alpha)\bx=0,
\end{equation}
which, together with $A^TA\succeq \mb{0}$, yields $\bx\in \operatorname{Null}(A^TA)$ and also \eqref{eq:nullsubset}. Concluding all the above, the minimizer of $F(\bx)+\frac{1}{2\alpha}\bx^T(I-\mb{W})\bx$ exists.
	
	\subsection{Proof of Lemma \ref{lemma:DGDoptimalitygap}}\label{append:gapDGD}
{\color{black} Note that $\mc{G}$ is undirected and connected, and the matrix $W$ is symmetric, stochastic, and satisfies that $w_{ij}>0$ if $\{i,j\}\in\mc{E}$ or $i=j$, and $w_{ij}=0$ otherwise. Therefore, $W$ describes the transition probability of an irreducible and aperiodic Markov chain. Then, by \cite[Proposition 1, 2]{diaconis1991geometric}, $\beta\in [0,1)$.}
	
{\color{black}Next, we prove a property of $\mb{W}$. Because $\mc{G}$ is connected and $\mb{W}=W\otimes I_d$, we have
	\begin{align}
	\operatorname{Null}(I-\mb{W}) &= \{\bx\in\R^{nd}: x_1 = x_2 = \ldots = x_n\},\label{eq:nullIminusW}\\
    \operatorname{Range}(I-\mb{W}) &= \{\bx\in\R^{nd}: \sum_{i=1}^n x_i=\mb{0}\},\label{eq:rangeIminiusW}
	\end{align}
    Let $\bar{\bx}=\mb{1}_n\otimes \frac{1}{n}\sum_{i\in\mc{V}} x_i$ for any $\bx\in\R^{nd}$. By \eqref{eq:nullIminusW}--\eqref{eq:rangeIminiusW} and the definition of $\bar{\bx}$, we have
   \begin{equation}\label{eq:inrange}
   \bar{\bx}\in\operatorname{Null}(I-\mb{W}),\quad \bx-\bar{\bx}\in\operatorname{Range}(I-\mb{W}).
   \end{equation}
   Moreover, because $\lambda_1(W)=1$ and $\lambda_2(W)\le \beta<1$, we have
    \begin{equation}
        I-\mb{W}\succeq \mb{0},
    \end{equation}
    and the minimum non-zero eigenvalue of $I-\mb{W}$ is $1-\lambda_2(W)\ge 1-\beta>0$. 
   This, together with \eqref{eq:inrange}, yields} \begin{equation}\label{eq:IminusWQp}
	\begin{split}
	\bx^T(I-\mb{W})\bx =& (\bx - \bar{\bx})^T(I-\mb{W})(\bx - \bar{\bx})\allowdisplaybreaks\\
	\ge& (1-\beta)\|\bx - \bar{\bx}\|^2.
	\end{split}
    \end{equation}

	Finally, we use \eqref{eq:IminusWQp} to show \eqref{eq:errorDGD}. Because $\bx^\star$ is the optimum of \eqref{eq:penalprob}, we have
	\begin{equation}\label{eq:DGDminimization}
	\begin{split}
	F(\bx^\star)+\frac{1}{2\alpha}(\bx^\star)^T(I-\mb{W})\bx^\star\le F_{\operatorname{opt}},
	\end{split}
	\end{equation}
	which, together with $F(\bx^\star)\ge \min F(\bx)$ and \eqref{eq:IminusWQp}, yields that for any $i\in\mc{V}$, $\|x_i^\star-\bar{x}^\star\|\le \|\bx^\star-\mb{1}_n\otimes\bar{x}^\star\|\le \sqrt{\frac{(\bx^\star)^T(I-\mb{W})\bx^\star}{1-\beta}}\le \sqrt{\frac{2\alpha(F_{\operatorname{opt}}-\min F(\bx))}{1-\beta}}$. Therefore, \eqref{eq:errorDGD} holds.
	
 \subsection{Proof of Lemma \ref{lemma:stronggap}}\label{append:stronggap}
	
	We first consider the case of non-identical $h_i$'s.

	\subsubsection{\textbf{Non-identical $h_i$'s}} Because $\bx^\star$ is optimal to \eqref{eq:penalprob}, by the first-order optimality condition,
	\begin{equation}\label{eq:consensuserrorpartialgra}
	(I-\mb{W})\bx^\star/\alpha \in \partial F(\bx^\star).
	\end{equation}
	
	\emph{{Case i)}}: each $f_i+h_i$ is Lipschitz continuous. In this case, $F$ is Lipschitz and we denote its Lipschitz constant as $G>0$. Then, the norm of all subgradients of $F$ is bounded above by $G$, so that by \eqref{eq:consensuserrorpartialgra},
	\begin{equation}\label{eq:conserrorboundbyG}
	\|(I-\mb{W})\bx^\star\|/\alpha\le G.
	\end{equation}
	Moreover, because $\mb{1}_n$ is the eigenvector of $W$ corresponding to the unique maximal eigenvalue $1$, we have
	\begin{equation}
	\|W-\frac{\mb{1}_n\mb{1}_n^T}{n}\|\le \max (|\lambda_2(W)|, |\lambda_n(W)|)=\beta,
	\end{equation}
	which, together with $(\frac{\mb{1}_n\mb{1}_n^T}{n}\otimes I_d)\bx^\star=\bar{\bx}^\star$ and $\mb{W}=W\otimes I_d$, gives
	\begin{equation}\label{eq:Wxstar}
	\begin{split}
	\|\mb{W}(\bx^\star-\bar{\bx}^\star)\| &= \|(\mb{W}-\frac{\mb{1}_n\mb{1}_n^T}{n}\otimes I_d)(\bx^\star-\bar{\bx}^\star)\|\\
	&\le \|\mb{W}-\frac{\mb{1}_n\mb{1}_n^T}{n}\otimes I_d\|\cdot\|\bx^\star-\bar{\bx}^\star\|\\
	&\le \beta\|\bx^\star-\bar{\bx}^\star\|.
	\end{split}
	\end{equation}
	By \eqref{eq:Wxstar} and $\mb{W}\bar{\bx}^\star=\bar{\bx}^\star$,
	\begin{equation}\label{eq:IminusW}
	\begin{split}
	\|(I-\mb{W})\bx^\star\| &= \|(I-\mb{W})(\bx^\star-\bar{\bx}^\star)\|\\
	&\ge \|\bx^\star-\bar{\bx}^\star\|-\|\mb{W}(\bx^\star-\bar{\bx}^\star)\|\\
	&\ge (1-\beta)\|\bx^\star-\bar{\bx}^\star\|,
	\end{split}
	\end{equation}
	substituting \eqref{eq:IminusW} into \eqref{eq:conserrorboundbyG} yields \eqref{eq:betaDGD}.
	
	\emph{{Case ii)}}: $\operatorname{dom} h=\R^{nd}$ and $f_i+h_i$ is coercive. Because $\operatorname{dom} F=\R^{nd}$ and $F$ is coercive, $\mc{S}$ is compact {\color{black}(closed and bounded)} and, therefore,
   \begin{equation}
	\max_{\bx\in\mc{S}, \mb{v}\in\partial F(\bx)}\|\mb{v}\| <+\infty.
	\end{equation}
	By the above equation, \eqref{eq:consensuserrorpartialgra}, and \eqref{eq:IminusW}, we have
	\begin{equation}
	\|\bx^\star-\bar{\bx}^\star\|\le \frac{\alpha \max_{\bx\in\mc{S}, \mb{v}\in\partial F(\bx)}\|\mb{v}\|}{1-\beta},
	\end{equation}
	which yields \eqref{eq:betaDGD}.
	
	\subsubsection{\textbf{Identical $h_i$'s}} Define $\by^\star = \prox_{\alpha h}(\bar{\bx}^\star)$. Because
	\begin{equation}\label{eq:DGDfp}
	\bx^\star = \prox_{\alpha h}(\mb{W}\bx^\star-\alpha \nabla f(\bx^\star)),
	\end{equation}
	using the non-expansiveness of the proximal operator, we have
	\begin{equation}\label{eq:xstarystar}
	\begin{split}
	&\|\bx^\star-\by^\star\|\\
	= &\|\prox_{\alpha h}(\mb{W}\bx^\star-\alpha\nabla f(\bx^\star))-\prox_{\alpha h}(\bar{\bx}^\star)\|\\
	\le& \|\mb{W}\bx^\star-\alpha\nabla f(\bx^\star)-\bar{\bx}^\star\|\\
	\le& \|\mb{W}(\bx^\star-\bar{\bx}^\star)\|+\alpha\|\nabla f(\bx^\star)\|\\
        \le& \beta\|\bx^\star-\bar{\bx}^\star\|+\alpha\|\nabla f(\bx^\star)\|,
	\end{split}
	\end{equation}
	where the last two step use $\mb{W}\bar{\bx}^\star=\bar{\bx}^\star$ and \eqref{eq:Wxstar}, respectively. Because $h_i$'s are identical, so are all blocks of $\by^\star$, i.e., $\by^\star$ belongs to the consensus set $\{\bx: x_1=\ldots=x_n\}$. Moreover, $\bar{\bx}^\star$ is the projection of $\bx^\star$ onto the consensus set. Therefore,
	\begin{equation*}
	\|\bx^\star-\by^\star\| \ge \|\bx^\star-\bar{\bx}^\star\|.
	\end{equation*}
	By the above equation and \eqref{eq:xstarystar},
	\begin{equation}\label{eq:conserrorboundbygra}
	\|\bx^\star-\bar{\bx}^\star\|\le \frac{\alpha}{1-\beta}\|\nabla f(\bx^\star)\|.
	\end{equation}
	Moreover, because $f$ is $L$-smooth and because of \eqref{eq:conserrorboundbygra},
	\begin{equation}\label{eq:fbarandnonbar}
	\begin{split}
	f(\bar{\bx}^\star) - f(\bx^\star) &\le \langle\nabla f(\bx^\star), \bar{\bx}^\star-\bx^\star\rangle+\frac{L}{2}\|\bar{\bx}^\star-\bx^\star\|^2\\
 &\le \|\nabla f(\bx^\star)\|\cdot\|\bar{\bx}^\star-\bx^\star\|+\frac{L}{2}\|\bar{\bx}^\star-\bx^\star\|^2\\
	&\le \left(\frac{\alpha}{1-\beta}+\frac{L\alpha^2}{2(1-\beta)^2}\right)\|\nabla f(\bx^\star)\|^2.
	\end{split}
	\end{equation}
	Also, because all the $h_i$'s are convex and identical, we have
	\begin{equation}\label{eq:hbarx}
	\begin{split}
	h(\bar{\bx}^\star) &= \sum_{i\in\mc{V}} h_i(\frac{1}{n}\sum_{j\in\mc{V}} x_j^\star)\\
	&\le \sum_{i\in\mc{V}} \frac{1}{n}\sum_{j\in\mc{V}}h_i(x_j^\star)\\
	&= h(\bx^\star).
	\end{split}
	\end{equation}
	By \eqref{eq:fbarandnonbar}--\eqref{eq:hbarx} and $F(\bx^\star)\le F_{\operatorname{opt}}$ yield by \eqref{eq:DGDminimization}, we obtain
	\begin{equation}\label{eq:keystepFave}
	\begin{split}
	F(\bar{\bx}^\star) &\le F(\bx^\star)+\left(\frac{\alpha}{1-\beta}+\frac{L\alpha^2}{2(1-\beta)^2}\right)\|\nabla f(\bx^\star)\|^2\\
	&\le F_{\operatorname{opt}}+\left(\frac{\alpha}{1-\beta}+\frac{L\alpha^2}{2(1-\beta)^2}\right)\|\nabla f(\bx^\star)\|^2.
	\end{split}
	\end{equation}
	
	\emph{{Case i)}}: both $f_i$ and $h_i$ are bounded from the below. Let $\underline{f}$ and $\underline{h}$ be lower bounds of all $f_i$'s and all $h_i$'s, respectively. Because each $f_i$ is $L_i$-smooth,
	\begin{equation*}
	\begin{split}
	f_i(x_i^\star) &\ge f_i(x_i^\star-\frac{1}{L_i}\nabla f_i(x_i^\star))+\frac{1}{2L_i}\|\nabla f_i(x_i^\star)\|^2\\
	&\ge \underline{f}+\frac{1}{2L_i}\|\nabla f_i(x_i^\star)\|^2,
	\end{split}
	\end{equation*}
	so that
	\begin{equation}\label{eq:grabound}
	\|\nabla f(\bx^\star)\|^2 \le 2L(f(\bx^\star)-n\underline{f}).
	\end{equation}
	Moreover, by \eqref{eq:DGDminimization},
	\begin{equation}\label{eq:fstarbound}
	\begin{split}
	f(\bx^\star) &= F(\bx^\star) - h(\bx^\star)\le F_{\operatorname{opt}} - n\underline{h}.
	\end{split}
	\end{equation}
	Substituting \eqref{eq:fstarbound} into \eqref{eq:grabound} gives
	\begin{equation*}
	\|\nabla f(\bx^\star)\|\le \sqrt{2L(F_{\operatorname{opt}} - n(\underline{f}+\underline{h}))}.
	\end{equation*}
	Then by using \eqref{eq:conserrorboundbygra} and \eqref{eq:keystepFave}, we obtain \eqref{eq:betaDGD} and \eqref{eq:funcave}.

	\emph{Case ii)}: Lipschitz continuous $f_i$. In this case, $\nabla f(\bx)$ is bounded. Then by \eqref{eq:conserrorboundbygra} and \eqref{eq:keystepFave}, we have \eqref{eq:betaDGD} and \eqref{eq:funcave}.
	
	\emph{{Case iii)}}: each $f_i+h_i$ is coercive. In this case, $F$ is coercive, so that the following set is compact:
	\begin{equation}
	\mc{S} = \{\bx: F(\bx)\le F_{\operatorname{opt}}\}.
	\end{equation}
	Moreover, due to \eqref{eq:DGDminimization}, $\bx^\star\in\mc{S}$. Therefore,
	\begin{equation}
	\|\nabla f(\bx^\star)\| \le \max_{\bx\in\mc{S}} \|\nabla f(\bx)\|<+\infty.
	\end{equation}
	substituting which into by \eqref{eq:conserrorboundbygra} and \eqref{eq:keystepFave} yields \eqref{eq:betaDGD} and \eqref{eq:funcave}.	

 	\subsection{Proof of Theorem \ref{thm:total}}\label{append:thmtotal}
	The proof includes two steps. Step 1 rewrites the asynchronous Prox-DGD as an asynchronous fixed-point update in order to use an existing convergence theorem. Step 2 proves that the asynchronous fixed-point update satisfies the conditions in the convergence theorem, which guarantees the convergence of the asynchronous Prox-DGD.
	
	\textbf{Step 1: asynchronous fixed-point iteration.} Prox-DGD \eqref{eq:DGD} can be described by
	\begin{equation}\label{eq:fpu}
	\mathbf{x}^{k+1} = \T(\mathbf{x}^k),
	\end{equation}
	where
	\begin{align}
	\T(\bx)= \prox_{\alpha h}(\mathbf{W}\bx - \alpha \nabla f(\bx)),\label{eq:T}
	\end{align}
	Let $\T_i:\mathbb{R}^{nd}\rightarrow\mathbb{R}^d$ be the $i$th block of $\T$ for any $i\in \mc{V}$ and consider the asynchronous version of \eqref{eq:fpu}:
	\begin{equation}\label{eq:asyncop}
	x_i^{k+1} = \begin{cases}
	\T_i(\bz_i^k), & k\in\mc{K}_i,\\
	x_i^k, & \text{otherwise},
	\end{cases}
	\end{equation}
	where $\bz_i^k = (x_1^{t_{i1}^k}, \ldots, x_n^{t_{in}^k})$ for some non-negative integers $t_{ij}^k$. By letting
    \begin{equation}
	t_{ij}^k = 
	\begin{cases}
	s_{ij}^k, & j\in\bar{\mc{N}}_i,\\
	k, & \text{otherwise},
	\end{cases} \forall i\in\mc{V},~k\in\mc{K}_i.
    \end{equation}
    \eqref{eq:asyncop} with $\T$ in \eqref{eq:T} describes the asynchronous Prox-DGD.
	
	For the asynchronous update \eqref{eq:asyncop}, \cite{Feyzmahdavian23} presents the following convergence results for pseudo-contractive operator $\T$ defined as: for some $c\in(0,1)$,
	\begin{equation}\label{eq:pseudocontractive}
	\!\|\T(\bx)-\bx^\star\|_{\infty}^b\le c\|\bx-\bx^\star\|_{\infty}^b,\forall \bx\in\mathbb{R}^{nd}, \bx^\star\in \operatorname{Fix} \T,
    \end{equation}
    where $\operatorname{Fix} \T$ is the fixed point set of $\T$.
    \begin{lemma}[Theorem 23, \cite{Feyzmahdavian23}]\label{lemma:total}
		Suppose that Assumption \ref{asm:totalasynchrony} holds and
        \begin{align}\label{eq:initialfey}
	   0\in \mc{K}_i, \forall i\in\mc{V}.
	\end{align}
        If \eqref{eq:pseudocontractive} holds for some $c\in(0,1)$, then $\{\bx^k\}$ generated by the iteration \eqref{eq:asyncop} converges to the unique fixed point of $\T$.
	\end{lemma}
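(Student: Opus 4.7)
My plan is to invoke the classical nested-sets framework for asynchronous iterations, instantiated in the block-max norm $\|\cdot\|_\infty^b$ so that the pseudo-contractivity \eqref{eq:pseudocontractive} directly yields block-separable invariant sets shrinking geometrically to the unique fixed point. First, \eqref{eq:pseudocontractive} already gives uniqueness: applied with $\bx = \bx_1^\star$ for any two fixed points $\bx_1^\star,\bx_2^\star$ it forces $\|\bx_1^\star - \bx_2^\star\|_\infty^b \le c\|\bx_1^\star - \bx_2^\star\|_\infty^b$, hence equality; call the unique fixed point $\bx^\star$ (its existence is embedded in the non-vacuous reading of \eqref{eq:pseudocontractive}). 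With $r = \|\bx^0 - \bx^\star\|_\infty^b$, define for each $m \in \N_0$ the block-separable box
\[
X(m) = \prod_{i\in\mc{V}} X_i(m), \qquad X_i(m) = \{x_i \in \R^d : \|x_i - x_i^\star\| \le c^m r\},
\]
so that $X(m+1) \subset X(m)$, $\bigcap_m X(m) = \{\bx^\star\}$, and $\bx^0 \in X(0)$.

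The synchronous invariance $\T(X(m)) \subset X(m+1)$ follows from \eqref{eq:pseudocontractive}: if $\bx \in X(m)$ then $\|\bx - \bx^\star\|_\infty^b \le c^m r$, whence $\|\T_i(\bx) - x_i^\star\| \le c^{m+1} r$ for every $i\in\mc{V}$. I will then show by induction on $m$ that there exists $K_m \in \N_0$ with $\bx^k \in X(m)$ for all $k \ge K_m$. The base case $K_0 = 0$ is immediate. For the induction step, suppose $\bx^k \in X(m)$ for all $k \ge K_m$. By part 2 of Assumption \ref{asm:totalasynchrony}, $s_{ij}^k \to \infty$, so there exists $\tilde K \ge K_m$ with $t_{ij}^k \ge K_m$ for every $i,j$ and $k \ge \tilde K$. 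For such $k$, block-separability of $X(m)$ gives $\bz_i^k \in X(m)$, and the synchronous invariance yields $\T_i(\bz_i^k) \in X_i(m+1)$; hence every update iteration $k \in \mc{K}_i$ with $k \ge \tilde K$ places $x_i^{k+1}$ inside $X_i(m+1)$. By part 1 of Assumption \ref{asm:totalasynchrony}, each node $i$ performs such an update at some finite $\kappa_i$, and once a block has entered $X_i(m+1)$ it cannot leave: non-update steps preserve it via $x_i^{k+1} = x_i^k$, and subsequent update steps still draw from $X(m) \supset X(m+1)$ and therefore map back into $X_i(m+1)$. Setting $K_{m+1} = \max_i \kappa_i + 1$ closes the induction.

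Since $c^m r \to 0$, the induction implies $\|\bx^k - \bx^\star\|_\infty^b \to 0$, which is equivalent to $\bx^k \to \bx^\star$ in the Euclidean norm. The main obstacle is orchestrating the inductive step: one must use both halves of total asynchrony simultaneously, with part 2 (``old information is eventually purged'') guaranteeing that delayed indices become large enough for $\bz_i^k$ to inherit the inductive hypothesis, and part 1 (``no node ceases to update'') guaranteeing that every block is actually driven into the smaller box; the monotonicity once inside $X_i(m+1)$ is then preserved by the trivial non-update branch of \eqref{eq:asyncop}. The role of the extra hypothesis \eqref{eq:initialfey} that $0 \in \mc{K}_i$ is minor: it aligns the initial step of the asynchronous iteration with the fixed-point formulation \eqref{eq:asyncop}, but is not essential to the convergence argument itself.
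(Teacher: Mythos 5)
This lemma is not proved in the paper at all: it is imported verbatim as Theorem~23 of the cited reference (Feyzmahdavian et al.), and the paper only adds the remark that the hypothesis $0\in\mc{K}_i$ can be dropped. Your proposal therefore supplies a self-contained argument where the paper has none, and the argument you give is correct in substance: it is the classical Bertsekas--Tsitsiklis asynchronous convergence theorem, instantiated with the nested block-separable boxes $X(m)=\prod_i\{x_i:\|x_i-x_i^\star\|\le c^m r\}$ that the pseudo-contraction \eqref{eq:pseudocontractive} naturally induces in the block-maximum norm. This is almost certainly the mechanism behind the cited theorem as well, so you have reconstructed the external result rather than found a genuinely new route. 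Two points deserve tightening. First, the base case is not quite ``immediate'': $K_0=0$ requires $\bx^k\in X(0)$ for \emph{all} $k\ge 0$, not merely $\bx^0\in X(0)$, and this needs a preliminary strong induction on $k$ showing that the iterates never leave $X(0)$ (every component $x_j^{t_{ij}^k}$ has index at most $k$, hence lies in $X_j(0)$ by the inner induction hypothesis, so the update lands in $X_i(1)\subseteq X_i(0)$, while non-updated blocks are unchanged). You invoke exactly this invariance mechanism in the inductive step, so the fix is routine, but as written the base case is asserted rather than proved. Second, existence of a fixed point does not follow from \eqref{eq:pseudocontractive} alone (the condition is vacuous if $\operatorname{Fix}\T=\emptyset$); in the paper this is supplied separately by Assumption~\ref{asm:optsolexist} (equivalently Assumptions~\ref{asm:prob} and~\ref{asm:strongconvexity}), so your parenthetical about the ``non-vacuous reading'' should be replaced by an explicit appeal to that hypothesis. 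Your closing observation that $0\in\mc{K}_i$ plays no essential role agrees with the paper's own remark immediately after the lemma.
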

	Although Theorem 23 in \cite{Feyzmahdavian23} assumes \eqref{eq:initialfey} for simplicity, the convergence still holds without \eqref{eq:initialfey}. With Lemma \ref{lemma:total}, to show Theorem \ref{thm:total}, it suffices to show \eqref{eq:pseudocontractive} for $\T$ in \eqref{eq:T}.

	\textbf{Step 2: Proof of pseudo-contractivity \eqref{eq:pseudocontractive}}.  Because of \eqref{eq:DGDfp}, any optimum of problem \eqref{eq:penalprob} is a fixed point of $\T$ in \eqref{eq:T}. For simplicity of representation, we define
	\begin{align}
	y_i &= \sum_{j\in\bar{\mc{N}}_i} w_{ij}x_j-\alpha\nabla f_i(x_i),\label{eq:yidef}\\
	y_i^\star &= \sum_{j\in\bar{\mc{N}}_i} w_{ij}x_j^\star-\alpha\nabla f_i(x_i^\star).\label{eq:yistardef}
	\end{align}
	Then,
	\begin{align}
	\T_i(\bx) = \prox_{\alpha h_i}(y_i),~\T_i(\bx^\star) = \prox_{\alpha h_i}(y_i^\star).\label{eq:Tixandy}
	\end{align}
	For any $i\in\mc{V}$, by $x_i^\star=\T_i(\bx^\star)$ and the non-expansiveness of the proximal operator,
	\begin{equation}\label{eq:nonexpansiveATC}
	\begin{split}
	&\|\T_i(\bx)-x_i^\star\|^2 = \|\T_i(\bx)-\T_i(\bx^\star)\|^2\allowdisplaybreaks\\
	=&\|\prox_{\alpha h_i}(y_i)-\prox_{\alpha h_i}(y_i^\star)\|^2\le \|y_i-y_i^\star\|^2\allowdisplaybreaks\\
	=&\|\sum_{j\in\mc{N}_i} w_{ij}(x_j-x_j^\star) +\\
	&\quad w_{ii}(x_i-x_i^\star-\frac{\alpha}{w_{ii}}(\nabla f_i(x_i)-\nabla f_i(x_i^\star)))\|^2\allowdisplaybreaks\\
	\le&\sum_{j\in\mc{N}_i} w_{ij}\|x_j-x_j^\star\|^2+\\
	& w_{ii}\|x_i-x_i^\star-\frac{\alpha}{w_{ii}}(\nabla f_i(x_i)-\nabla f_i(x_i^\star))\|^2,
	\end{split}
	\end{equation}
	where the last step uses Jensen's inequality on the norm square. Since each $f_i$ is $\mu_i$-strongly convex and $\nabla f_i$ is Lipschitz continuous, by \cite[Equation (2.1.8)]{nesterov2003introductory},
	\begin{align}
	&\langle \nabla f_i(x_i)-\nabla f_i(x_i^\star), x_i-x_i^\star\rangle\ge\mu_i\|x_i-x_i^\star\|^2,\label{eq:proofstrongconvexity}\\
	&\langle \nabla f_i(x_i)-\nabla f_i(x_i^\star), x_i-x_i^\star\rangle\ge\frac{\|\nabla f_i(x_i)-\nabla f_i(x_i^\star)\|^2}{L_i}.\label{eq:proofsmooth}
	\end{align}
	Then,
	\begin{equation}\label{eq:shrinkDGD}
	\begin{split}
	&\|x_i-x_i^\star-\frac{\alpha}{w_{ii}}(\nabla f_i(x_i)-\nabla f_i(x_i^\star))\|^2\allowdisplaybreaks\\
	=&\|x_i-x_i^\star\|^2-2\frac{\alpha}{w_{ii}}\langle \nabla f_i(x_i)-\nabla f_i(x_i^\star), x_i-x_i^\star\rangle\allowdisplaybreaks\\
	&+\left(\frac{\alpha}{w_{ii}}\right)^2\|\nabla f_i(x_i)-\nabla f_i(x_i^\star)\|^2\allowdisplaybreaks\\
	\overset{\eqref{eq:proofsmooth}}{\le}& \|x_i-x_i^\star\|^2\!\!-\!\frac{\alpha}{w_{ii}}\left(2\!-\!\frac{L_i\alpha}{w_{ii}}\right)\langle \nabla f_i(x_i)\!-\!\nabla f_i(x_i^\star), x_i-x_i^\star\rangle\allowdisplaybreaks\\
	\overset{\eqref{eq:proofstrongconvexity}}{\le}&\left(1-\frac{\alpha}{w_{ii}}\left(2-\frac{L_i\alpha}{w_{ii}}\right)\mu_i\right)\|x_i-x_i^\star\|^2\allowdisplaybreaks.
	\end{split}
	\end{equation}
	Substituting \eqref{eq:shrinkDGD} into \eqref{eq:nonexpansiveATC} yields
	\begin{equation}\label{eq:contractDGD}
	\begin{split}
	\|\T_i(\bx)-x_i^\star\|^2\le \rho^2(\|\bx-\bx^\star\|_{\infty}^b)^2
	\end{split}
	\end{equation}
	with $\rho$ in \eqref{eq:rhodgd}, i.e., \eqref{eq:pseudocontractive} holds with $c=\rho$. Then by Lemma \ref{lemma:total}, the convergence of $\{\bx^k\}$ in Theorem \ref{thm:total} holds.

	\subsection{Proof of Theorem \ref{thm:partial}}\label{append:thmpartial}
	
	\subsubsection{Asymptotic convergence}
	
	The proof uses Proposition 2.3 in \cite[Section 7.2]{bertsekas2015parallel}. Although it only considers the scalar case, i.e., $d=1$, its proof holds for the general case of $d>1$ by simply extending the absolute value to the Euclidean norm. The proposition is rewritten below for convenience.
	\begin{proposition}[Proposition 7.2.3,\cite{bertsekas2015parallel}]\label{proposition:copyBertsekas}
		Suppose that Assumption \ref{asm:partialasynchrony} holds. The sequence $\{\bx^k\}$ generated by \eqref{eq:asyncop} converges to some points in the fixed point set $X^\star$ of $\T$ if
		\begin{enumerate}[(a)]
			\item $X^\star$ is non-empty and closed.
			\item $\T$ is continuous and non-expansive in terms of the block-wise maximum norm: for all $\bx\in\mathbb{R}^{nd}$ and $\bx^\star\in X^\star$,
			\begin{equation}\label{eq:nonexpoperator}
			\|\T(\bx)-\bx^\star\|_{\infty}^b\le \|\bx-\bx^\star\|_{\infty}^b.
		  \end{equation}
		  \item For every $\bx\in\mathbb{R}^{nd}$ and $\bx^\star\in X^\star$ such that $\|\bx-\bx^\star\|_{\infty}^b=\inf_{\by\in X^\star} \|\bx-\by\|_{\infty}^b$, let
			\begin{align*}
			&\mc{I}(\bx,\bx^\star) = \{i|~\|x_i-x_i^\star\| = \|\bx-\bx^\star\|_{\infty}^b\}\\
			&\mc{U}(\bx, \bx^\star) = \{\by:~y_i=x_i\text{ for all }i\in\mc{I}(\bx,\bx^\star),\\
			& \text{and } \|y_i-x_i^\star\|<\|\bx-\bx^\star\|_{\infty}^b \text{ for all }i\notin \mc{I}(\bx,\bx^\star)\}.
			\end{align*}
			If $\|\bx-\bx^\star\|_{\infty}^b>0$, then there exists some $i\in\mc{I}(\bx,\bx^\star)$ such that $\T_i(\by)\ne y_i$ for all $\by\in\mc{U}(\bx,\bx^\star)$.
			\item For any $\bx\in\!\mbb{R}^{nd}$, $\bx^\star\!\in\!X^\star$, and $i\in\mc{V}$, if $\T_i(\bx)\ne x_i$, then
			\begin{equation}\label{eq:condd}
			\|\T_i(\bx)-x_i^\star\|<\|\bx-\bx^\star\|_{\infty}^b.
			\end{equation}
		\end{enumerate}
	\end{proposition}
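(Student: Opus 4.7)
The plan is to adapt the classical Bertsekas--Tsitsiklis Lyapunov-set argument for asynchronous iterations, invoking the four hypotheses in succession: (b) supplies boundedness, (a) and (b) guarantee that any cluster point has a well-defined projection on $X^\star$, (c) and (d) together force strict progress, and (a) again pins down a unique limit. Partial asynchrony (Assumption~\ref{asm:partialasynchrony}) is what converts the qualitative conditions into an effective quantitative decrease over bounded windows.

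As a first step I would establish boundedness of the iterates. Fix an arbitrary $\bx^\star\in X^\star$ and set $V^k=\max_{0\le \ell\le k}\|\bx^\ell-\bx^\star\|_{\infty}^b$. For any $i$ with $k\in \mc{K}_i$, condition (b) and $\T_i(\bx^\star)=x_i^\star$ give
\begin{equation*}
\|x_i^{k+1}-x_i^\star\|=\|\T_i(\bz_i^k)-\T_i(\bx^\star)\|\le \|\bz_i^k-\bx^\star\|_{\infty}^b.
\end{equation*}
Since each block of $\bz_i^k$ equals some $x_j^{t_{ij}^k}$ with $t_{ij}^k\le k$, a simple induction shows $V^{k+1}=V^k=\|\bx^0-\bx^\star\|_{\infty}^b$, so the trajectory stays bounded. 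By Bolzano--Weierstrass there exists a subsequence $\bx^{k_m}\to\tilde\bx$, and because $X^\star$ is closed by (a) the sequence of minimisers $\bar\bx^\star_{k_m}\in\arg\min_{\by\in X^\star}\|\bx^{k_m}-\by\|_{\infty}^b$ admits, after passing to a further subsequence, a limit $\tilde\bx^\star\in X^\star$ with $d^\star:=\|\tilde\bx-\tilde\bx^\star\|_{\infty}^b=\inf_{\by\in X^\star}\|\tilde\bx-\by\|_{\infty}^b$.

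Next I would argue by contradiction that $d^\star=0$. Assume $d^\star>0$ and apply (c) at $(\tilde\bx,\tilde\bx^\star)$ to obtain an index $i_0\in \mc{I}(\tilde\bx,\tilde\bx^\star)$ with $\T_{i_0}(\by)\ne y_{i_0}$ for all $\by\in \mc{U}(\tilde\bx,\tilde\bx^\star)$. Partial asynchrony ensures that in the window $\{k_m,\ldots,k_m+B\}$ node $i_0$ is activated at some $k\in \mc{K}_{i_0}$, while the components $\bz_{i_0}^k$ it reads are at most $D$ iterations old, so by the continuity part of (b) and the bounds from Step~1 one can arrange (for $m$ large) $\bz_{i_0}^k\in \mc{U}(\tilde\bx,\tilde\bx^\star)$ up to controllable perturbation. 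Then $\T_{i_0}(\bz_{i_0}^k)\ne (\bz_{i_0}^k)_{i_0}$ triggers (d), yielding a uniform strict gap
\begin{equation*}
\|x_{i_0}^{k+1}-\tilde x_{i_0}^\star\|\le d^\star-\varepsilon
\end{equation*}
for some $\varepsilon>0$ independent of $m$. Propagating this gap via non-expansiveness (b) through the next $B+D$ iterations — long enough for every block to be refreshed and every older large-norm reference to be purged — produces $\|\bx^{k_m+B+D+1}-\tilde\bx^\star\|_{\infty}^b\le d^\star-\varepsilon'$ for some $\varepsilon'>0$, contradicting the fact that $\|\bx^{k_m}-\tilde\bx^\star\|_{\infty}^b\to d^\star$ together with the non-expansive monotonicity along $\tilde\bx^\star$. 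Hence $d^\star=0$, and since this holds for every cluster point while the distance $\|\bx^k-\tilde\bx^\star\|_{\infty}^b$ to the particular limit $\tilde\bx^\star$ is non-increasing by (b), the whole sequence converges to $\tilde\bx^\star\in X^\star$.

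The main obstacle is the contradiction step: condition (c) is purely qualitative (some block \emph{can} change on an open set), while what one needs is a uniform, quantitative strict decrease of the block-max norm over a sliding window of length $B+D$. Handling this requires (i) synchronising the activation schedule of the critical index $i_0$ with the length-$B$ update guarantee, (ii) controlling the delayed read vector $\bz_{i_0}^k$ so that it lies in the open neighbourhood $\mc{U}(\tilde\bx,\tilde\bx^\star)$ despite the perturbation $\bx^{k_m}\ne\tilde\bx$, and (iii) showing that after the strict drop at block $i_0$ no ``stale'' larger-norm block can undo the decrease once a full window of $B+D$ iterations has elapsed. The bookkeeping across delays, together with the fact that $\mc{I}(\tilde\bx,\tilde\bx^\star)$ may contain several indices with different update times, is where the argument is most delicate.
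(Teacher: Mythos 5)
First, note that the paper does not actually prove this proposition: it is quoted verbatim from Bertsekas and Tsitsiklis (Proposition 2.3 in Section 7.2 of \cite{bertsekas2015parallel}), with only the remark that the scalar proof carries over to $d>1$ by replacing absolute values with Euclidean norms. So the relevant comparison is with the original Bertsekas--Tsitsiklis argument, whose overall architecture (boundedness via non-expansiveness, extraction of a cluster point, projection onto $X^\star$, contradiction via (c) and (d)) your sketch does reproduce. Your Step 1 is correct: the running-maximum argument shows the iterates never leave the ball of radius $\|\bx^0-\bx^\star\|_{\infty}^b$, and more generally the windowed maximum $\max_{k-D\le\ell\le k}\|\bx^\ell-\bx^\star\|_{\infty}^b$ is non-increasing, which is what you actually need; by contrast, the per-iterate distance $\|\bx^k-\tilde\bx^\star\|_{\infty}^b$ is \emph{not} monotone under delays, so the final sentence of your argument ("the distance to the particular limit is non-increasing by (b)") is false as stated, though repairable via the windowed quantity.

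The genuine gap is the contradiction step, and you have named it yourself without closing it. Condition (c) asserts $\T_{i_0}(\by)\ne y_{i_0}$ only for $\by$ in $\mc{U}(\tilde\bx,\tilde\bx^\star)$, a set that demands \emph{exact} equality $y_i=\tilde x_i$ on the critical index set $\mc{I}(\tilde\bx,\tilde\bx^\star)$ and therefore has empty interior; no perturbation or continuity argument places the actual delayed read vector $\bz_{i_0}^k$ inside it, and continuity of $\T$ does not propagate the conclusion $\T_{i_0}(\by)\ne y_{i_0}$ from $\mc{U}$ to nearby points outside it. Likewise, condition (d) yields only a strict inequality, not the uniform gap $d^\star-\varepsilon$ with $\varepsilon$ independent of $m$ that your decrease-propagation step requires; upgrading strictness to uniformity needs a compactness argument over the admissible read vectors, which you do not supply. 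The Bertsekas--Tsitsiklis proof resolves precisely this difficulty with a careful induction on the set of block indices attaining the limiting block-maximum distance, showing that this set cannot remain stable under the asynchronous dynamics; that bookkeeping is the substance of the proof. As written, your proposal is a sound plan whose hardest step is left as an acknowledged obstacle rather than an argument.
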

	
	Next, we prove that all the conditions in Proposition \ref{proposition:copyBertsekas} hold for $\T$ in \eqref{eq:T}. Because the fixed point set of \eqref{eq:T} is the optimal solution set of \eqref{eq:penalprob} and, by Assumption \ref{asm:optsolexist}, the optimal set of problem \eqref{eq:penalprob} is closed and non-empty, so is the fixed point set of \eqref{eq:T}, i.e., condition (a) holds for $\T$ in \eqref{eq:T}.
	
	\textbf{Proof of condition (b)}: Since $\T$ in \eqref{eq:T} is continuous, to show condition (b), it suffices to show the non-expansiveness. For any $i\in\mc{V}$, by \eqref{eq:stepsizecond} and the first two steps of \eqref{eq:shrinkDGD},	\begin{equation}\label{eq:opATCconvexityandsmooth}
	\begin{split}
	&\|x_i-x_i^\star-\frac{\alpha}{w_{ii}}(\nabla f_i(x_i)-\nabla f_i(x_i^\star))\|^2\le\|x_i-x_i^\star\|^2,
	\end{split}
	\end{equation}
	substituting which into \eqref{eq:nonexpansiveATC} yields
	\begin{equation}\label{eq:Tixxstar}
	\begin{split}
	\!\!\|\T_i(\bx)-x_i^\star\|^2&\!\le \!\sum_{j\in\bar{\mc{N}}_i}\!w_{ij}\|x_j-x_j^\star\|^2\!\le\! (\|\bx-\bx^\star\|_{\infty}^b)^2.
	\end{split}
	\end{equation}
	Since \eqref{eq:Tixxstar} holds for all $i\in\mc{V}$, condition (b) holds.
	
	\textbf{Proof of condition (c).} We prove this condition by contradiction. Let $\bx$ and $\bx^\star$ be such that $\|\bx-\bx^\star\|_{\infty}^b=\inf_{\by\in X^\star} \|\bx-\by\|_{\infty}^b>0$. Suppose that for any $i\in \mc{I}(\bx,\bx^\star)$, there exists a vector $\by^i\in \mc{U}(\bx,\bx^\star)$ such that
	\begin{equation}\label{eq:yfix}
	\T_i(\by^i)=y_i^i.
	\end{equation}
	Below we will show that this contradicts the assumption $\|\bx-\bx^\star\|_{\infty}^b>0$. Fix $i\in \mc{I}(\bx,\bx^\star)$. By \eqref{eq:yfix} and \eqref{eq:T},
	\begin{equation}
	y_i^i = \T_i(\by^i) = \prox_{\alpha h_i}\biggl(\sum_{j\in\bar{\mc{N}}_i} w_{ij}y_j^i - \alpha\nabla f_i(y_i^i)\biggr).
	\end{equation}
	Moreover, by the third equality in \eqref{eq:nonexpansiveATC}, the convexity of norm, and \eqref{eq:opATCconvexityandsmooth}, $\|y_i^i-x_i^\star\|=\|\sum_{j\in\mc{N}_i} w_{ij}(y_j^i-x_j^\star) +w_{ii}(y_i^i-x_i^\star-\frac{\alpha}{w_{ii}}(\nabla f_i(y_i^i)-\nabla f_i(x_i^\star)))\|\le \sum_{j\in\bar{\mc{N}}_i} w_{ij}\|y_j^i-x_j^\star\|$,
	which, together with $i\in \mc{I}(\bx,\bx^\star)$ and $\by^i\in \mc{U}(\bx, \bx^\star)$, yields
	\begin{equation}\label{eq:bxbxstardis}
	\begin{split}
	&\|\bx-\bx^\star\|_{\infty}^b=\|x_i-x_i^\star\|=\|y_i^i-x_i^\star\|\\
	\le &\sum_{j\in\bar{\mc{N}}_i} w_{ij}\|y_j^i-x_i^\star\|.
	\end{split}
	\end{equation}
	Because $\|y_j^i-x_j^\star\|\le\|\bx-\bx^\star\|_{\infty}^b$ for all $j\in\bar{\mc{N}}_i$, $w_{ij}\in (0,1]$, and $\sum_{j\in\bar{\mc{N}}_i} w_{ij}=1$, equation \eqref{eq:bxbxstardis} implies
	\begin{equation*}
	\|y_j^i-x_j^\star\|=\|\bx-\bx^\star\|_{\infty}^b~\forall j\in\bar{\mc{N}}_i,
	\end{equation*}
	which, together with $\by^i\in \mc{U}(\bx,\bx^\star)$, indicates $\bar{\mc{N}}_i\subseteq \mc{I}(\bx,\bx^\star)$.

	Following this process for every $j\in\mc{N}_i$, we will have $\bar{\mc{N}}_j\subseteq \mc{I}(\bx,\bx^\star)$. By the connectivity of the graph $\mc{G}$ and following this process, we can derive
	\begin{equation*}
	\mc{I}(\bx,\bx^\star) = \mc{V},
	\end{equation*}
	and therefore, $\mc{U}(\bx,\bx^\star) = \{\bx\}$ which also indicates $\by^i=\bx$ $\forall i\in\mc{V}$. Then, by \eqref{eq:yfix}, $\T(\bx) = \bx$, i.e., $\bx\in X^\star$. Note that we require $\bx$ and $\bx^\star$ be such that $\|\bx-\bx^\star\|_{\infty}^b=\inf_{\by\in X^\star}\|\by-\bx^\star\|_{\infty}^b$. This, together with $\bx\in X^\star$, yields $\bx=\bx^\star$, which contradicts the assumption that $\|\bx-\bx^\star\|_{\infty}^b>0$.
	
	Concluding the above, there exists $i\in\mc{I}(\bx,\bx^\star)$ such that $\T_i(\by)\ne y_i$ for all $\by\in\mc{U}(\bx,\bx^\star)$, i.e., condition (c) holds.
	
	\textbf{Proof of condition (d).} To proceed, we first derive the following lemma.
	\begin{lemma}\label{lemma:equav}
		Let $\beta>0$ and $v_1,\ldots,v_m\in\mbb{R}^d$ be vectors satisfying $\|v_i\|\le\beta$ for all $1\le i\le m$. If there exists positive constants $\alpha_1,\ldots,\alpha_m$ such that $\sum_{i=1}^m \alpha_i = 1$ and \begin{equation}\label{eq:vinormeq}
		\|\sum_{i=1}^m \alpha_iv_i\| = \beta,
		\end{equation}
		then
		\begin{equation}\label{eq:vieq}
		v_1  = \ldots = v_m.
		\end{equation}
	\end{lemma}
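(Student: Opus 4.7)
\textbf{Proof proposal for Lemma \ref{lemma:equav}.} The plan is to use the strict convexity of the squared Euclidean norm, combined with the given bound $\|v_i\| \le \beta$, to force all $v_i$ to coincide with the weighted average $\bar v = \sum_{i=1}^m \alpha_i v_i$. The key tool is the standard ``variance identity''
\begin{equation*}
\sum_{i=1}^m \alpha_i \|v_i - \bar v\|^2 = \sum_{i=1}^m \alpha_i \|v_i\|^2 - \|\bar v\|^2,
\end{equation*}
which holds for any convex combination with $\sum_{i=1}^m \alpha_i = 1$ and follows by expanding the squared norms.

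First, I would apply the bound $\|v_i\| \le \beta$ to deduce $\sum_{i=1}^m \alpha_i \|v_i\|^2 \le \beta^2$. Combining this with the hypothesis $\|\bar v\|^2 = \beta^2$ in the variance identity gives
\begin{equation*}
\sum_{i=1}^m \alpha_i \|v_i - \bar v\|^2 \le \beta^2 - \beta^2 = 0.
\end{equation*}
Since each $\alpha_i > 0$ and each term $\|v_i - \bar v\|^2 \ge 0$, this forces $\|v_i - \bar v\| = 0$ for every $i$, i.e., $v_i = \bar v$ for all $i$, which yields \eqref{eq:vieq}.

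There is no serious obstacle here: the statement is precisely the equality case of Jensen's inequality for the strictly convex function $\|\cdot\|^2$, and the variance identity makes the strictness quantitative. The only thing to be careful about is that the conclusion relies on the weights $\alpha_i$ being strictly positive (as assumed in the hypothesis); if some $\alpha_i$ were allowed to vanish, the corresponding $v_i$ would remain unconstrained. Since the lemma will be used in the proof of Proposition \ref{proposition:copyBertsekas}(c) with coefficients $w_{ij}$ restricted to $j \in \bar{\mc{N}}_i$ (where $w_{ij}>0$ by the definition of the averaging matrix), this positivity assumption is aligned with the intended application.
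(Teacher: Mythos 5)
Your proof is correct, but it takes a genuinely different route from the paper. The paper argues via the equality case of the triangle inequality: from $\beta=\|\sum_{i=1}^m \alpha_i v_i\|\le \sum_{i=1}^m \alpha_i\|v_i\|\le\beta$ it first deduces $\|v_i\|=\beta$ for all $i$ together with equality in the triangle inequality, and then asserts that vectors of equal norm can achieve this equality only if they are identical (a step the paper states without detailed justification; it rests on the fact that, for $\beta>0$, equality forces the $v_i$ to be pairwise positively parallel). You instead invoke the variance identity $\sum_{i=1}^m \alpha_i\|v_i-\bar v\|^2=\sum_{i=1}^m \alpha_i\|v_i\|^2-\|\bar v\|^2$, bound the first term on the right by $\beta^2$, and conclude that the weighted variance vanishes, so $v_i=\bar v$ for every $i$ by positivity of the $\alpha_i$. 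Both arguments are valid; yours has the advantage of being fully self-contained and quantitative, reducing everything to a single algebraic identity and avoiding the slightly informal appeal to the equality case of the triangle inequality for $m\ge 3$ vectors, while the paper's version uses only the triangle inequality as its tool. Your closing remark about the necessity of strictly positive weights, and its consistency with the intended application to the weights $w_{ij}$ for $j\in\bar{\mc{N}}_i$, is also accurate.
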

	\begin{proof}
		Since $\|v_i\|\le\beta$ for all $1\le i\le m$ and $\beta=\|\sum_{i=1}^m \alpha_iv_i\|\le \sum_{i=1}^m \alpha_i \|v_i\|$, we have $\|v_i\|=\beta$ $\forall i\in\mc{V}$ and
		\begin{equation}\label{eq:suminoutnorm}
		\|\sum_{i=1}^m \alpha_iv_i\|= \sum_{i=1}^m \alpha_i \|v_i\|.
		\end{equation}
		Because all the $v_i$'s have the same norm, \eqref{eq:suminoutnorm} is possible only when all the $v_i$'s are identical, i.e., \eqref{eq:vieq} holds.
	\end{proof}
	
{\color{black}Next, based on Lemma \ref{lemma:equav}, we prove condition (d) by contradiction. We suppose that \eqref{eq:condd} does not hold and prove 
    \begin{equation}\label{eq:Tixieq}
        \T_i(\bx)=x_i,
    \end{equation}
     which contradicts the assumption $\T_i(\bx)\ne x_i$.
     
    Because \eqref{eq:condd} fails to hold and because of \eqref{eq:Tixxstar}, we have
	\begin{equation}\label{eq:tixeqxistar}
	\|\T_i(\bx)-x_i^\star\| = \|\bx-\bx^\star\|_{\infty}^b,
	\end{equation}
    i.e., both sides of \eqref{eq:Tixxstar} are identical. Note that the derivation of \eqref{eq:Tixxstar} uses \eqref{eq:nonexpansiveATC} and \eqref{eq:shrinkDGD} with $\mu_i=0$, and to make the two sides of \eqref{eq:Tixxstar} equal, all inequalities in \eqref{eq:nonexpansiveATC}, \eqref{eq:shrinkDGD} must hold with equality. Using this fact, we obtain several important equations. Specifically, letting both sides of the first inequality in \eqref{eq:nonexpansiveATC} equal yields    \begin{equation}\label{eq:proxeq}
	   \|\prox_{\alpha h_i}(y_i)-\prox_{\alpha h_i}(y_i^\star)\|^2= \|y_i-y_i^\star\|^2
    \end{equation}
    where $y_i,y_i^\star$ are introduced in \eqref{eq:yidef}--\eqref{eq:yistardef}, and letting both sides of the last inequality in \eqref{eq:shrinkDGD} equal gives
    \begin{equation}\label{eq:grasquanorm0}
        \frac{\alpha}{w_{ii}}\left(2\!-\!\frac{L_i\alpha}{w_{ii}}\right)\langle \nabla f_i(x_i)\!-\!\nabla f_i(x_i^\star), x_i-x_i^\star\rangle=0.
    \end{equation}
    By \eqref{eq:tixeqxistar}, \eqref{eq:proxeq}, \eqref{eq:Tixandy}, and \eqref{eq:yidef}--\eqref{eq:yistardef}, we have
    \begin{equation}\label{eq:convexcombinationeq}
         \begin{split}
            & \|\bx-\bx^\star\|_{\infty}^b = \|\sum_{j\in\mc{N}_i} w_{ij}(x_j-x_j^\star)+\\
	       &\quad w_{ii}(x_i-x_i^\star-\frac{\alpha}{w_{ii}}(\nabla f_i(x_i)-\nabla f_i(x_i^\star)))\|.
         \end{split}
	\end{equation}

Below, we first use \eqref{eq:proxeq} -- \eqref{eq:convexcombinationeq} to prove
\begin{align}
    & \T_i(\bx)-y_i = \T_i(\bx^\star)-y_i^\star = x_i^\star-y_i^\star,\label{eq:Tixyixistar}\\
    & \nabla f_i(x_i)=\nabla f_i(x_i^\star),\label{eq:identicalgra}\\
    & x_i-x_i^\star = x_j-x_j^\star,~\forall j\in\mc{N}_i,\label{eq:xixjidentical}
\end{align}
and then use \eqref{eq:Tixyixistar}--\eqref{eq:xixjidentical} to derive \eqref{eq:Tixieq}. According to \cite[Proposition 12.27]{bauschke2011convex},
	\begin{align*}
	&\|\prox_{\alpha h_i}(y_i)-\prox_{\alpha h_i}(y_i^\star)\|^2\\
	\le& \|y_i-y_i^\star\|^2-\|y_i-\prox_{\alpha h_i}(y_i)-(y_i^\star-\prox_{\alpha h_i}(y_i^\star))\|^2,
	\end{align*}
	by which and \eqref{eq:proxeq}, we have 
	\begin{equation}\label{eq:yiminusproxyi}
        y_i-\prox_{\alpha h_i}(y_i)=y_i^\star-\prox_{\alpha h_i}(y_i^\star).
    \end{equation}
	Moreover, $\prox_{\alpha h_i}(y_i)=\T_i(\bx)$ and $\prox_{\alpha h_i}(y_i^\star)=\T_i(\bx^\star)=x_i^\star$, which, together with \eqref{eq:yiminusproxyi}, yield \eqref{eq:Tixyixistar}. By \eqref{eq:stepsizecond}, \eqref{eq:grasquanorm0}, and \eqref{eq:proofsmooth}, we have \eqref{eq:identicalgra}. To prove \eqref{eq:xixjidentical}, we use Lemma \ref{lemma:equav} with $\beta=\|\bx-\bx^\star\|_{\infty}^b$, $m=|\bar{\mc{N}}_i|$, and $v_1,\ldots,v_m$ being $x_j-x_j^\star$, $j\in\mc{N}_i$ and $x_i-x_i^\star-\frac{\alpha}{w_{ii}}(\nabla f_i(x_i)-\nabla f_i(x_i^\star))$. By \eqref{eq:opATCconvexityandsmooth}, for any $1\le \ell\le m$, $\|v_{\ell}\|\le \beta$, which, together with \eqref{eq:convexcombinationeq}, implies that all the conditions in Lemma \ref{lemma:equav} hold and therefore,
	\begin{equation*}
	x_j-x_j^\star = x_i-x_i^\star - \frac{\alpha}{w_{ii}}(\nabla f_i(x_i)-\nabla f_i(x_i^\star)),~\forall j\in\mc{N}_i.
	\end{equation*}
	By the above equation and \eqref{eq:identicalgra}, we obtain \eqref{eq:xixjidentical}. Using \eqref{eq:Tixyixistar}--\eqref{eq:xixjidentical}, we have
	\begin{equation*}
	\begin{split}
	\!\T_i(\bx) &\overset{\eqref{eq:Tixyixistar}}{=} y_i+x_i^\star-y_i^\star\allowdisplaybreaks\\
	&\overset{\eqref{eq:yidef}}{=}\sum_{j\in\bar{\mc{N}}_i} w_{ij}x_j-\alpha\nabla f_i(x_i)+x_i^\star-y_i^\star\allowdisplaybreaks\\
	&\overset{\eqref{eq:xixjidentical}}{=} \!\sum_{j\in\bar{\mc{N}}_i} w_{ij}(x_j^\star\!+x_i\!-x_i^\star)-\!\alpha\nabla f_i(x_i)+\!x_i^\star-y_i^\star\allowdisplaybreaks\\
	&\overset{\eqref{eq:identicalgra}}{=}  x_i+\sum_{j\in\bar{\mc{N}}_i}w_{ij}x_j^\star- \alpha\nabla f_i(x_i^\star) - y_i^\star \overset{\eqref{eq:yistardef}}{=} x_i,
	\end{split}
	\end{equation*}
	i.e., \eqref{eq:Tixieq} holds.
 
    Note that \eqref{eq:Tixieq} contradicts the assumption that $\T_i(\bx)\ne x_i$. Concluding the above, condition (d) holds.}
	\subsubsection{Linear convergence}
	The proof uses \cite[Theorem 24]{Feyzmahdavian23}.
	\begin{lemma}[Theorem 24, \cite{Feyzmahdavian23}]\label{lemma:partial}
		Suppose that Assumption \ref{asm:partialasynchrony} and \eqref{eq:initialfey} hold. If \eqref{eq:pseudocontractive} holds for some $c\in(0,1)$, then $\{\bx^k\}$ generated by the asynchronous iteration \eqref{eq:asyncop} satisfy
		\begin{equation}\label{eq:linearfey}
		\|\bx^k-\bx^\star\|_{\infty}^b\le c^{\frac{k}{B+D+1}}\|\bx^0-\bx^\star\|_{\infty}^b.
		\end{equation}
	\end{lemma}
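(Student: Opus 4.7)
The plan is to establish, by induction on $m$, the stronger claim
\begin{equation*}
\|\bx^k - \bx^\star\|_\infty^b \le c^m \|\bx^0 - \bx^\star\|_\infty^b \quad \text{for every } k \ge k^m,
\end{equation*}
where $k^m := m(B+D+1)$ for $m \in \N_0$. The bound \eqref{eq:linearfey} then follows by choosing $m = \lfloor k/(B+D+1)\rfloor$ (which is equivalent, up to interpreting the exponent as a floor, to the rate in Theorem \ref{thm:partial}).

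The base case $m = 0$ is a pure non-expansivity argument. Because $c \in (0,1)$, the pseudo-contractive hypothesis \eqref{eq:pseudocontractive} implies $\|\T(\by) - \bx^\star\|_\infty^b \le \|\by - \bx^\star\|_\infty^b$ for every $\by$. A straightforward induction on $k$ then shows $\|\bx^k - \bx^\star\|_\infty^b \le \|\bx^0 - \bx^\star\|_\infty^b$: blocks of $\bx^{k+1}$ either repeat blocks of $\bx^k$ (inactive nodes) or are outputs $\T_i(\bz_i^k)$ whose input is assembled from earlier iterates, each of which already satisfies the uniform bound by the inductive hypothesis; pseudo-contractivity applied blockwise even shrinks the output.

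For the inductive step $m \to m+1$, fix $k \ge k^{m+1}$ and a node $i$, and let $t$ be the most recent element of $\mc{K}_i$ strictly less than $k$. By part 1 of Assumption \ref{asm:partialasynchrony}, the window $\{k^m + D, \ldots, k^m + D + B\} = \{k^m + D, \ldots, k^{m+1} - 1\}$ contains at least one element of $\mc{K}_i$, so $t \ge k^m + D$. By part 2 of the same assumption, the input indices satisfy $s_{ij}^t \ge t - D \ge k^m$ for every $j \in \bar{\mc{N}}_i$, so the input $\bz_i^t$ is built from blocks whose iterate counters are all $\ge k^m$. The inductive hypothesis gives $\|\bz_i^t - \bx^\star\|_\infty^b \le c^m \|\bx^0 - \bx^\star\|_\infty^b$, and pseudo-contractivity applied to $\T_i$ yields $\|x_i^{t+1} - x_i^\star\| \le c^{m+1}\|\bx^0 - \bx^\star\|_\infty^b$. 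Any still-later update by node $i$ in the window $(t, k)$ draws from inputs with indices $\ge k^m$ as well, so by the same reasoning $x_i^k$ inherits the bound $c^{m+1}\|\bx^0 - \bx^\star\|_\infty^b$; inactive stretches preserve it trivially. Taking the block-maximum over $i \in \mc{V}$ closes the induction.

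The main obstacle is simply the coupled bookkeeping: the window length $B+D+1$ is tight because one must first wait $D$ iterations after $k^m$ so that every delayed input carries an index $\ge k^m$, and then a further window of length $B+1$ so that every node executes at least one update using such fresh inputs. Verifying that re-updates within a single window do not reintroduce stale information is routine index arithmetic once this structure is in place. The condition \eqref{eq:initialfey} that all nodes are active at $k = 0$ is only used to ensure the base case starts from a coherent state; it plays no role beyond $k = 0$.
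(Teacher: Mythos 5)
The paper does not actually prove Lemma \ref{lemma:partial}: it is imported verbatim as Theorem 24 of \cite{Feyzmahdavian23}, so there is no in-paper argument to compare against, and your attempt should be judged as a reconstruction of the reference's result. Your windowed induction is sound as far as it goes: the base case is pure non-expansiveness, and in the inductive step the two inequalities $t\ge k^m+D$ (part 1 of Assumption \ref{asm:partialasynchrony} applied at $k^m+D$, using $k^m+D+B=k^{m+1}-1<k$) and $s_{ij}^t\ge t-D\ge k^m$ (part 2) are exactly the right bookkeeping, and $x_i^k=x_i^{t+1}=\T_i(\bz_i^t)$ because $t$ is the last activation of node $i$ before $k$. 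The remark about ``still-later updates in $(t,k)$'' is vacuous since $t$ was already chosen maximal, but that is harmless.

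The one substantive gap is that your argument proves $\|\bx^k-\bx^\star\|_{\infty}^b\le c^{\lfloor k/(B+D+1)\rfloor}\|\bx^0-\bx^\star\|_{\infty}^b$, which for $c\in(0,1)$ is strictly weaker than the stated $c^{k/(B+D+1)}$ whenever $k$ is not a multiple of $B+D+1$ (e.g.\ $k=1$, $B+D+1=10$ gives $c^0=1$ versus the claimed $c^{1/10}<1$). The floored bound is all the paper ever uses --- it is exactly \eqref{eq:nokiincludeallzero} and the rate in Theorem \ref{thm:partial} --- but it is not the literal statement of the lemma. The unfloored exponent is obtained in \cite{Feyzmahdavian23} by a different route: first establish the per-iteration recursion $\|\bx^{k+1}-\bx^\star\|_{\infty}^b\le c\max_{k-\tau^k\le t\le k}\|\bx^t-\bx^\star\|_{\infty}^b$ with $\tau^k\le B+D$ (this is precisely the inequality the paper quotes in Appendix \ref{append:thmadapt}), then apply a sequence lemma showing that any such recursion decays as $c^{k/(B+D+1)}$. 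This is also where \eqref{eq:initialfey} genuinely matters: it guarantees that every block of every $\bx^{k+1}$ with $k\ge 0$ is the output of an application of some $\T_i$, so the recursion holds from the very first iteration; it is not merely ``a coherent base case'' (indeed, your own proof never uses it). A side benefit of the recursion-based route is that it also yields the delay-adaptive Theorem \ref{thm:adapt} essentially for free, which a fixed-window induction does not.
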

	Note that without the condition \eqref{eq:initialfey}, the proof of \cite[Theorem 24]{Feyzmahdavian23} still holds, with the convergence rate \eqref{eq:linearfey} becomes
	\begin{equation}\label{eq:nokiincludeallzero}
	\|\bx^k-\bx^\star\|_{\infty}^b\le c^{\lfloor\frac{k}{B+D+1}\rfloor}\|\bx^0-\bx^\star\|_{\infty}^b.
	\end{equation}
    Moreover, according to \eqref{eq:contractDGD}, \eqref{eq:pseudocontractive} holds with $c=\rho$. Then, by Lemma \ref{lemma:partial}, we obtain \eqref{eq:linear_convergence}.

    \subsection{Proof of Theorem \ref{thm:adapt_step}}\label{append:proof_thm_adapt_step}
    {\color{black}
    Similar to Appendix \ref{append:thmpartial}, the synchronous version of \eqref{eq:adapt_DGDxiupdate} can be described by $x_i^{k+1}=\T_i(\bx^k)$ where
    \begin{equation}\label{eq:Ti_adapt}
        \T_i(\bx) = \prox_{\alpha h_i}(x_i+\gamma_i(\sum_{j\in\bar{\mc{N}}_i}w_{ij}x_j-x_i-\alpha\nabla f_i(x_i)).
    \end{equation}
    In this proof, we ignore the iteration indexes for simplicity. Denote $x_i^+=\T_i(\bx)$. The key of the proof is to show
    \begin{equation}\label{eq:key_adapt}
        \|x_i^+-x_i^\star\|^2\le \rho^2(\|\bx-\bx^\star\|^b_{\infty})^2
    \end{equation}
    for $\rho=\sqrt{\frac{1}{1+\alpha\min_{i\in\mc{V}} (\gamma_i^{\min}\mu_i)}}$. Define
    \[\tilde{w}_{ii} = 1+\gamma_i(w_{ii}-1), \quad \tilde{w}_{ij} = \gamma_iw_{ij},\quad \forall j\in\mc{N}_i\]
    and $\tilde{w}_{ij}=0$ otherwise. Clearly, $\sum_{j\in\mc{V}}\tilde{w}_{ij} = 1$ and, due to \eqref{eq:adapt_step}, we have $\gamma_i< \frac{1}{1-w_{ii}}$ and, therefore, $\tilde{w}_{ij}\in (0,1)$ $\forall j\in\bar{\mc{N}}_i$. Then, \eqref{eq:Ti_adapt} is equivalent to
    \begin{equation}\label{eq:fp_adapt}
       x_i^+= \T_i(\bx)=\prox_{\alpha h_i}\biggl(\sum_{j\in\bar{\mc{N}}_i} \tilde{w}_{ij}x_j  - \gamma_i\alpha \nabla f_i(x_i)\biggr).
    \end{equation}
    Note that \eqref{eq:fp_adapt} and Prox-DGD \eqref{eq:DGD} share the same fixed point set. Therefore, for the update \eqref{eq:fp_adapt}, there exist $g_i^+\in \partial h_i(x_i^+)$ and $g_i^\star\in\partial h_i(x_i^\star)$ such that
    \begin{align*}
        x_i^+ &= \sum_{j\in\bar{\mc{N}}_i} \tilde{w}_{ij}x_j - \gamma_i\alpha (\nabla f_i(x_i)+g_i^+),\\
        x_i^\star &= \sum_{j\in\bar{\mc{N}}_i} \tilde{w}_{ij}x_j^\star - \gamma_i\alpha (\nabla f_i(x_i^\star)+g_i^\star).
    \end{align*}
    By the above two equations,,
	\begin{equation}\label{eq:adapt_expansion}
	\begin{split}
	&\|x_i^+-x_i^\star\|^2 = \langle \sum_{j\in\bar{\mc{N}}_i} \tilde{w}_{ij}(x_j-x_j^\star), x_i^+-x_i^\star\rangle\\
	&-\gamma_i\alpha\langle \nabla f_i(x_i)+g_i^+-\nabla f_i(x_i^\star)-g_i^\star, x_i^+-x_i^\star\rangle.
	\end{split}
	\end{equation}
    Note that \eqref{eq:adapt_step} implies
    \[\begin{split}
        &\langle \nabla f_i(x_i^+) - \nabla f_i(x_i), x_i^+-x_i\rangle\\
        \le& \frac{1+\gamma_i(w_{ii}-1)}{2\gamma_i\alpha}\|x_i^+-x_i\|^2\\
        =& \frac{\tilde{w}_{ii}}{2\gamma_i\alpha}\|x_i^+-x_i\|^2
    \end{split},\]
    by which and the convexity of $f_i$,
    \[\begin{split}
        & \langle \nabla f_i(x_i), x_i^+-x_i\rangle\\
    \ge & f_i(x_i^+) -f_i(x_i)-\frac{\tilde{w}_{ii}}{2\gamma_i\alpha}\|x_i^+-x_i\|^2.
    \end{split}\]
    By the above equation and the convexity of $f_i$, 
    \begin{equation}\label{eq:adapt_conv_smooth}
        \begin{split}
            &\langle \nabla f_i(x_i), x_i^+-x_i^\star\rangle\\
            =&\langle \nabla f_i(x_i), x_i^+-x_i\rangle+\langle \nabla f_i(x_i), x_i-x_i^\star\rangle\\
            \ge& f_i(x_i^+)- f_i(x_i^\star)-\frac{\tilde{w}_{ii}}{2\gamma_i\alpha}\|x_i^+-x_i\|^2.
        \end{split}
    \end{equation}
By \eqref{eq:adapt_conv_smooth}, the convexity of $h_i$, and the $\mu_i$-strong convexity of $f_i+h_i$, we have
    \begin{equation*}
        \begin{split}
            &\langle \nabla f_i(x_i)+g_i^+-\nabla f_i(x_i^\star)-g_i^\star, x_i^+-x_i^\star\rangle\\
            \ge& F_i(x_i^+) -F_i(x_i^\star)-\langle\nabla f_i(x_i^\star)+g_i^\star, x_i^+-x_i^\star\rangle\\
            &-\!\frac{\tilde{w}_{ii}}{2\gamma_i\alpha}\|x_i^+-x_i\|^2\\
            \ge& \frac{\mu_i}{2}\|x_i^+-x_i^\star\|^2-\!\frac{\tilde{w}_{ii}}{2\gamma_i\alpha}\|x_i^+-x_i\|^2
        \end{split}
    \end{equation*}
    Moreover,
    \[\begin{split}
        &\langle \sum_{j\in\bar{\mc{N}}_i} \tilde{w}_{ij}(x_j-x_j^\star), x_i^+-x_i^\star\rangle\\
        =& \sum_{j\in\bar{\mc{N}}_i} \tilde{w}_{ij}\langle x_j-x_j^\star, x_i^+-x_i^\star\rangle\\
        \le&\frac{1}{2}\sum_{j\in\mc{N}_i} \tilde{w}_{ij}(\|x_j-x_j^\star\|^2 + \|x_i^+-x_i^\star\|^2)\\
        &+ \frac{\tilde{w}_{ii}}{2}(\|x_i^+-x_i^\star\|^2+\|x_i-x_i^\star\|^2-\|x_i^+-x_i\|^2)\\
        \le& \frac{1}{2}((\|\bx-\bx^\star\|_{\infty}^b)^2+\|x_i^+-x_i^\star\|^2-\tilde{w}_{ii}\|x_i^+-x_i\|^2).
    \end{split}\]
    Substituting the above two equations into \eqref{eq:adapt_expansion} yields
    \[\begin{split}
        &\|x_i^+-x_i^\star\|^2 \le (\|\bx-\bx^\star\|_{\infty}^b)^2-\mu_i\gamma_i\alpha\|x_i^+-x_i^\star\|^2,
    \end{split}\]
    which, together with $\gamma_i\ge \gamma_i^{\min}$, leads to \eqref{eq:key_adapt} with $\rho=\sqrt{\frac{1}{1+\alpha\min_{i\in\mc{V}} (\gamma_i^{\min}\mu_i)}}$.  With \eqref{eq:key_adapt} in hand and following the procedure of proving Theorem \ref{thm:total} and \eqref{eq:linear_convergence}, we obtain the results in Theorem \ref{thm:adapt_step}.
}
    
    \subsection{Proof of Theorem \ref{thm:adapt}}\label{append:thmadapt}
    By \eqref{eq:contractDGD} and the proof of Theorem 24 in \cite{Feyzmahdavian23},
    \begin{equation*}
	\|\mathbf{x}^{k+1}-\mathbf{x}^\star\|_{\infty}^b\le        \rho\max_{k-\tau^k\le t\le k}\|\mathbf{x}^t-\mathbf{x}^\star\|_{\infty}^b,
    \end{equation*}
    which, together with the definition of $\{k^m\}$, yields the result.
    
    \subsection{Proof of Lemma \ref{lemma:DGDATCoptimalitygap}}\label{append:DGDATCoptimalitygap}

    DGD-ATC \eqref{eq:DGD-ATC} can be viewed as a weighted gradient method \cite[Section 1.2.1]{bertsekas1995nonlinear} with weight matrix $\mb{W}$ for solving
	\begin{equation}\label{eq:problemATC}
	\underset{\bx\in\mathbb{R}^{nd}}{\operatorname{minimize}}~f(\bx)+\frac{\bx^T(\mb{W}^{-1}-I)\bx}{2\alpha}.
	\end{equation}
    Therefore, the existence of fixed point to DGD-ATC can be guaranteed by the non-emptiness of the optimal solution set of problem \eqref{eq:problemATC}.
	
	\subsubsection{Non-empty solution set of \eqref{eq:problemATC}}
	
	Let $z^\star$ be an optimum to \eqref{eq:consensusprob}. Define $\bz^\star=\mb{1}_n\otimes z^\star$, $\mc{L}_{\alpha}(\bx) = F(\bx)+\frac{\|\bx\|_{\mb{W}^{-1}-I}^2}{2\alpha}$, and
	\begin{equation*}
	\mc{C} = \{\bx: \mc{L}_{\alpha}(\bx)\le \mc{L}_{\alpha}(\bz^\star)\}.
	\end{equation*}
	Since $\min_{\bx\in\mbb{R}^{nd}} \mc{L}_{\alpha}(\bx)$ is equivalent to $\min_{\bx\in\mc{C}} \mc{L}_{\alpha}(\bx)$, the minimum of $\mc{L}_{\alpha}$ exists if the optimum of the later problem exists which holds true if $\mc{C}$ is non-empty and compact. Clearly, $\mc{C}$ is non-empty since $\bz^\star\in \mc{C}$.
	
	Below, we prove that $\mc{C}$ is compact. Let $\bx$ be an arbitrary vector in $\mc{C}$ and denote $\bar{\bx} = \mb{1}_n\otimes \frac{1}{n}\sum_{i\in\mc{V}} x_i$. By the $L$-smoothness of $f$,
	\begin{equation}\label{eq:smoothness}
	\begin{split}
	f(\bar{\bx}) - f(\bx) &\le \langle \nabla f(\bx), \bar{\bx}-\bx\rangle+\frac{L}{2}\|\bar{\bx}-\bx\|^2\\
	&\le \|\nabla f(\bx)\|\cdot\|\bar{\bx}-\bx\|+\frac{L}{2}\|\bar{\bx}-\bx\|^2.
	\end{split}
	\end{equation}
	By Assumption \ref{asm:prob} and $h_i\equiv 0$, each $f_i$ is bounded from the below, and we let $\underline{f}$ be a lower bound for all $f_i$'s. Then, by the $L_i$-smoothness of $f_i$,
	\begin{equation*}
	\underline{f}\le f_i(x_i-\frac{1}{L_i}\nabla f_i(x_i))\le f_i(x_i) - \frac{1}{2L_i}\|\nabla f_i(x_i)\|^2,
	\end{equation*}
	so that
	\begin{equation}\label{eq:boundedgra}
	\|\nabla f(\bx)\|^2 \le 2L(f(\bx) - n\underline{f}) \le 2L(f^\star - n\underline{f})
	\end{equation}
	where the last step is due to $f(\bx)\le \mc{L}_{\alpha}(\bx)\le \mc{L}_{\alpha}(\bz^\star)=f^\star$. In addition, because $f(\bx)\ge n\underline{f}$ and $\mc{L}_{\alpha}(\bx)\le f^\star$, we have
	\begin{equation}\label{eq:boundediff}
	\|\bx\|_{\mb{W}^{-1}-I}^2 = 2\alpha(\mc{L}_{\alpha}(\bx)-f(\bx))\le 2\alpha(f^\star-n\underline{f}).
	\end{equation}
	Since $\mc{G}$ is connected, we have $\operatorname{Range}(\mb{W}^{-1}-I)=\{\bx:x_1+\ldots+x_n=\mb{0}\}$, so that $\bx-\bar{\bx}$ is the projection of $\bx$ onto $\operatorname{Range}(\mb{W}^{-1}-I)$. Moreover, because $W\succ \mb{0}$, we have $\beta=\lambda_2(W)$, so that the minimum non-zero eigenvalue of $\mb{W}^{-1}-I$ is $\frac{1}{\lambda_2(W)}-1=\frac{1}{\beta}-1=\frac{1-\beta}{\beta}$. Therefore,
	\begin{equation*}
	\|\bx-\bar{\bx}\|^2\le \frac{\beta\|\bx\|_{\mb{W}^{-1}-I}^2}{1-\beta},
	\end{equation*}
	which, together with \eqref{eq:boundediff} and $\beta\le 1$, yields
	\begin{equation}\label{eq:gapxbarx}
	\|\bx-\bar{\bx}\|^2\le \frac{2\alpha(f^\star-n\underline{f})}{1-\beta}.
	\end{equation}
	Substituting \eqref{eq:boundedgra} and \eqref{eq:gapxbarx} into \eqref{eq:smoothness} and using $f(\bx)\le f^\star$ gives
	\begin{equation*}
	f(\bar{\bx})\le C_0:=f^\star+\left(\sqrt{\frac{\alpha L}{1-\beta}}+\frac{\alpha L}{1-\beta}\right)(f^\star-h),
	\end{equation*}
	or equivalently, $\sum_{i\in\mc{V}} f_i(\frac{1}{n}\sum_{i\in\mc{V}} x_i) \le C_0$. Moreover, by \cite[Proposition B.9]{bertsekas1995nonlinear} and the bounded optimum set of problem \eqref{eq:consensusprob}, every level set of $\sum_{i\in\mc{V}} f_i(x)$ is bounded and, therefore, the following set is compact:
	\begin{align}\label{eq:levelset}
	\bigg\{y\in\mbb{R}^d: \sum_{i\in\mc{V}} f_i(y)\le C_0\bigg\}.
	\end{align}
	Due to the arbitrariness of $\bx\in \mc{C}$, we have that for any $\bx\in\mc{C}$, $\frac{1}{n}\sum_{i=1}^n x_i$ belongs to the compact set \eqref{eq:levelset} and \eqref{eq:gapxbarx} holds. Therefore, $\mc{C}$ is compact.
	
	Concluding all the above, the optimal solution set of problem \eqref{eq:problemATC} is non-empty.

	\subsubsection{Optimality gap}
	Because $\bx^\star$ is a fixed point of \eqref{eq:DGD-ATC},
	\begin{equation}
	\bx^\star = \mb{W}(\bx^\star - \alpha\nabla f(\bx^\star)),
	\end{equation}
	which, together with $I \succeq\mb{W}\succ\mb{0}$, yields
	\begin{equation}\label{eq:IminusWboundedbygra}
	\begin{split}
	\|(I-\mb{W})\bx^\star\| = \alpha\|\mb{W}\nabla f(\bx^\star)\|\le& \alpha\|\nabla f(\bx^\star)\|.
	\end{split}
	\end{equation}
    Moreover, because $\bx^\star\in\mc{C}$, \eqref{eq:boundedgra} with $\bx=\bx^\star$ holds, i.e.,
    \begin{equation}\label{eq:boundedgraATC}
        \|\nabla f(\bx^\star)\|^2\le 2L(f^\star - n\underline{f}).
    \end{equation}
    By \eqref{eq:IminusW}, \eqref{eq:IminusWboundedbygra}, and \eqref{eq:boundedgraATC}, we obtain
    \begin{equation}\label{eq:consensusATC}
	\|\bx^\star-\bar{\bx}^\star\|\le \frac{\alpha}{1-\beta}\sqrt{2L(f^\star - n\underline{f})},
	\end{equation}
    i.e., \eqref{eq:betaDGD} holds.
	
    Next, we prove \eqref{eq:funcave}. Since $\bz^\star$ is the minimum of $\mc{L}_{\alpha}$,
	\begin{equation}\label{eq:fxstarfstar}
	f(\bx^\star) \le \mc{L}_{\alpha}(\bx^\star)\le \mc{L}_{\alpha}(\bz^\star) = f^\star.
	\end{equation}
    By letting $\bx=\bx^\star$ in \eqref{eq:smoothness} and using \eqref{eq:boundedgraATC}--\eqref{eq:consensusATC}, we obtain
    \begin{equation*}
	\begin{split}
	f(\bar{\bx}^\star) \le& f(\bx^\star) + \left(\frac{\alpha}{1-\beta}+\frac{L\alpha^2}{2(1-\beta)^2}\right)\cdot 2L(f^\star - n\underline{f}),
	\end{split}
    \end{equation*}
    which, together with \eqref{eq:fxstarfstar}, yields \eqref{eq:funcave} and completes the proof.
	
	\subsection{Proof of Theorem \ref{thm:totalATC}}\label{append:thmtotalATC}
	The proof is similar to that of Theorem \ref{thm:total}. We first rewrite the update \eqref{eq:DGD-ATC} in an operator form and prove the pseudo-contractivity \eqref{eq:pseudocontractive}, and then use Lemma \ref{lemma:total} to show convergence.
	
	DGD-ATC \eqref{eq:DGD-ATC} can be described by the general fixed-point update \eqref{eq:fpu},
	where
	\begin{align}
	\T(\bx)= \mathbf{W}(\bx - \alpha \nabla f(\bx)).\label{eq:TATC}
	\end{align}
	Let $\T_i:\mathbb{R}^{nd}\rightarrow\mathbb{R}^d$ be the $i$th block of $\T$. The update \eqref{eq:asyncop} with $\T$ in \eqref{eq:TATC} describes the asynchronous DGD-ATC.
	
	Below, we show \eqref{eq:pseudocontractive}. For any $i\in\mc{V}$, since $x_i^\star=\T_i(\bx^\star)$,
	\begin{equation}\label{eq:nonexpansiveCAA}
	\begin{split}
	&\|\T_i(\bx)-x_i^\star\|^2 = \|\T_i(\bx)-\T_i(\bx^\star)\|^2\\
	=&\|\sum_{j\in\bar{\mc{N}}_i} w_{ij}(x_j-x_j^\star-\alpha(\nabla f_j(x_j)-\nabla f_j(x_j^\star)))\|^2\\
	\le&\sum_{j\in\bar{\mc{N}}_i} w_{ij}\|x_j-x_j^\star-\alpha(\nabla f_j(x_j)-\nabla f_j(x_j^\star))\|^2,
	\end{split}
	\end{equation}
	where the last step uses Jensen's inequality on the norm square. Similar to \eqref{eq:shrinkDGD} with $w_{ii}=1$,
	\begin{equation*}
	\begin{split}
	&\|x_j-x_j^\star-\alpha(\nabla f_j(x_j)-\nabla f_j(x_j^\star))\|^2\\
	\le&(1-\alpha(2-L_j\alpha))\|x_j-x_j^\star\|^2\\
	\le& \hat{\rho}^2(\|\bx-\bx^\star\|_{\infty}^b)^2.
	\end{split}
	\end{equation*}
	Substituting the above equation into \eqref{eq:nonexpansiveCAA} yields
	\begin{equation}\label{eq:linearATC}
	\|\T_i(\bx)-x_i^\star\|^2\le \hat{\rho}^2(\|\bx-\bx^\star\|_{\infty}^b)^2,
	\end{equation}
	which results in \eqref{eq:pseudocontractive} with $c=\hat{\rho}$.
	
	By \eqref{eq:pseudocontractive} and Lemma \ref{lemma:total}, the convergence result holds.
	

	\subsection{Proof of Theorem \ref{thm:partialATC}}\label{append:thmpartialATC}
	
	The proof is similar to the proof of Theorem \ref{thm:partial}.
	
	\subsubsection{Asymptotic convergence}
	
	We prove the asymptotic convergence by showing that all the conditions in Proposition \ref{proposition:copyBertsekas} hold. Because the fixed point set of $\T$ in \eqref{eq:TATC} is the optimal solution set of minimizing the convex and closed function $\mc{L}_{\alpha}$ defined at the beginning of Appendix \ref{append:DGDATCoptimalitygap}, it is closed. The non-emptiness of the fixed point set is shown by Lemma \ref{lemma:DGDATCoptimalitygap}. As a result, condition (a) in Proposition \ref{proposition:copyBertsekas} holds. The proofs of the conditions (c), (d) for the asynchronous DGD-ATC are very similar to their proof for the asynchronous Prox-DGD and are therefore omitted for simplicity.
	
	Below, we prove that condition (b) also holds. Since $\T$ in \eqref{eq:TATC} is continuous, to show condition (b), it suffices to show the non-expansiveness. By using \eqref{eq:stepsizecondATC} and similar derivation of \eqref{eq:opATCconvexityandsmooth}, we have
	\begin{equation*}
	\begin{split}
	&\|x_j-x_j^\star-\alpha(\nabla f_j(x_j)-\nabla f_j(x_j^\star))\|^2 \le\|x_j-x_j^\star\|^2.
	\end{split}
	\end{equation*}
	Substituting the above equation into \eqref{eq:nonexpansiveCAA} yields
	\begin{equation*}
	\begin{split}
	\|\T_i(\bx)-x_i^\star\|^2&\le \sum_{j\in\bar{\mc{N}}_i}w_{ij}\|x_j-x_j^\star\|^2\le (\|\bx-\bx^\star\|_{\infty}^b)^2.
	\end{split}
	\end{equation*}
	This completes the proof of condition (b) for $\T$ in \eqref{eq:TATC}. Then by Proposition \ref{proposition:copyBertsekas}, $\{\bx^k\}$ converges to a fixed point of $\T$.

    \subsubsection{Linear convergence}	
    By \eqref{eq:linearATC}, the pseudo-contractivity condition in Lemma \ref{lemma:partial} holds with $c=\hat{\rho}$. Then by \eqref{eq:nokiincludeallzero}, the linear convergence holds.
    \subsection{Proof of Theorem \ref{thm:adaptATC}}\label{append:thmadaptATC}

    By \eqref{eq:linearATC} and following the proof in Appendix \ref{append:thmadapt}, we obtain the result.

    \bibliographystyle{ieeetran}
    \bibliography{reference}

\begin{thebibliography}{10}
\providecommand{\url}[1]{#1}
\csname url@samestyle\endcsname
\providecommand{\newblock}{\relax}
\providecommand{\bibinfo}[2]{#2}
\providecommand{\BIBentrySTDinterwordspacing}{\spaceskip=0pt\relax}
\providecommand{\BIBentryALTinterwordstretchfactor}{4}
\providecommand{\BIBentryALTinterwordspacing}{\spaceskip=\fontdimen2\font plus
\BIBentryALTinterwordstretchfactor\fontdimen3\font minus
  \fontdimen4\font\relax}
\providecommand{\BIBforeignlanguage}[2]{{%
\expandafter\ifx\csname l@#1\endcsname\relax
\typeout{** WARNING: IEEEtran.bst: No hyphenation pattern has been}%
\typeout{** loaded for the language `#1'. Using the pattern for}%
\typeout{** the default language instead.}%
\else
\language=\csname l@#1\endcsname
\fi
#2}}
\providecommand{\BIBdecl}{\relax}
\BIBdecl

\bibitem{assran2020advances}
M.~Assran, A.~Aytekin, H.~R. Feyzmahdavian, M.~Johansson, and M.~G. Rabbat,
  ``Advances in asynchronous parallel and distributed optimization,''
  \emph{Proceedings of the IEEE}, vol. 108, no.~11, pp. 2013--2031, 2020.

\bibitem{huba2022papaya}
D.~Huba, J.~Nguyen, K.~Malik, R.~Zhu, M.~Rabbat, A.~Yousefpour, C.-J. Wu,
  H.~Zhan, P.~Ustinov, H.~Srinivas \emph{et~al.}, ``Papaya: Practical, private,
  and scalable federated learning,'' \emph{Proceedings of Machine Learning and
  Systems}, vol.~4, pp. 814--832, 2022.

\bibitem{nedic2010convergence}
A.~Nedi{\'c} and A.~Ozdaglar, ``Convergence rate for consensus with delays,''
  \emph{Journal of Global Optimization}, vol.~47, pp. 437--456, 2010.

\bibitem{mishchenko2018delay}
K.~Mishchenko, F.~Iutzeler, J.~Malick, and M.-R. Amini, ``A delay-tolerant
  proximal-gradient algorithm for distributed learning,'' in
  \emph{International Conference on Machine Learning}.\hskip 1em plus 0.5em
  minus 0.4em\relax PMLR, 2018, pp. 3587--3595.

\bibitem{zhang2019asyspa}
J.~Zhang and K.~You, ``{AsySPA}: An exact asynchronous algorithm for convex
  optimization over digraphs,'' \emph{IEEE Transactions on Automatic Control},
  vol.~65, no.~6, pp. 2494--2509, 2019.

\bibitem{doan2017convergence}
T.~T. Doan, C.~L. Beck, and R.~Srikant, ``On the convergence rate of
  distributed gradient methods for finite-sum optimization under communication
  delays,'' \emph{Proceedings of the ACM on Measurement and Analysis of
  Computing Systems}, vol.~1, no.~2, pp. 1--27, 2017.

\bibitem{kungurtsev2023decentralized}
V.~Kungurtsev, M.~Morafah, T.~Javidi, and G.~Scutari, ``Decentralized
  asynchronous non-convex stochastic optimization on directed graphs,''
  \emph{\emph{accepted to} IEEE Transactions on Control of Network Systems},
  2023.

\bibitem{assran2020asynchronous}
M.~S. Assran and M.~G. Rabbat, ``Asynchronous gradient push,'' \emph{IEEE
  Transactions on Automatic Control}, vol.~66, no.~1, pp. 168--183, 2020.

\bibitem{spiridonoff2020robust}
A.~Spiridonoff, A.~Olshevsky, and I.~C. Paschalidis, ``Robust asynchronous
  stochastic gradient-push: Asymptotically optimal and network-independent
  performance for strongly convex functions,'' \emph{Journal of machine
  learning research}, vol.~21, no.~58, pp. 1--47, 2020.

\bibitem{zhang2019fully}
J.~Zhang and K.~You, ``Fully asynchronous distributed optimization with linear
  convergence in directed networks,'' \emph{arXiv preprint arXiv:1901.08215v2},
  2019.

\bibitem{wu2017decentralized}
T.~Wu, K.~Yuan, Q.~Ling, W.~Yin, and A.~H. Sayed, ``Decentralized consensus
  optimization with asynchrony and delays,'' \emph{IEEE Transactions on Signal
  and Information Processing over Networks}, vol.~4, no.~2, pp. 293--307, 2017.

\bibitem{tian2020achieving}
Y.~Tian, Y.~Sun, and G.~Scutari, ``Achieving linear convergence in distributed
  asynchronous multiagent optimization,'' \emph{IEEE Transactions on Automatic
  Control}, vol.~65, no.~12, pp. 5264--5279, 2020.

\bibitem{bianchi2021distributed}
M.~Bianchi, W.~Ananduta, and S.~Grammatico, ``The distributed dual ascent
  algorithm is robust to asynchrony,'' \emph{IEEE Control Systems Letters},
  vol.~6, pp. 650--655, 2022.

\bibitem{cannelli2020asynchronous}
L.~Cannelli, F.~Facchinei, G.~Scutari, and V.~Kungurtsev, ``Asynchronous
  optimization over graphs: Linear convergence under error bound conditions,''
  \emph{IEEE Transactions on Automatic Control}, vol.~66, no.~10, pp.
  4604--4619, 2020.

\bibitem{latafat2022primal}
P.~Latafat and P.~Patrinos, ``Primal-dual algorithms for multi-agent structured
  optimization over message-passing architectures with bounded communication
  delays,'' \emph{Optimization Methods and Software}, vol.~37, no.~6, pp.
  2052--2079, 2022.

\bibitem{su2022convergence}
Y.~Su, Z.~Wang, M.~Cao, M.~Jia, and F.~Liu, ``Convergence analysis of dual
  decomposition algorithm in distributed optimization: Asynchrony and
  inexactness,'' \emph{IEEE Transactions on Automatic Control}, 2022.

\bibitem{ubl2021totally}
M.~Ubl and M.~Hale, ``Totally asynchronous large-scale quadratic programming:
  Regularization, convergence rates, and parameter selection,'' \emph{IEEE
  Transactions on Control of Network Systems}, vol.~8, no.~3, pp. 1465--1476,
  2021.

\bibitem{Wu23ICML}
X.~Wu, C.~Liu, S.~Magn\'{u}sson, and M.~Johansson, ``Delay-agnostic
  asynchronous coordinate update algorithm,'' in \emph{Proceedings of the 40th
  International Conference on Machine Learning}, vol. 202.\hskip 1em plus 0.5em
  minus 0.4em\relax PMLR, 2023, pp. 37\,582--37\,606.

\bibitem{wang2021asynchronous}
Y.~Wang, Q.~Zhao, and X.~Wang, ``An asynchronous gradient descent based method
  for distributed resource allocation with bounded variables,'' \emph{IEEE
  Transactions on Automatic Control}, vol.~67, no.~11, pp. 6106--6111, 2021.

\bibitem{yuan2016convergence}
K.~Yuan, Q.~Ling, and W.~Yin, ``On the convergence of decentralized gradient
  descent,'' \emph{SIAM Journal on Optimization}, vol.~26, no.~3, pp.
  1835--1854, 2016.

\bibitem{dalcin2008mpi}
L.~Dalc{\'\i}n, R.~Paz, M.~Storti, and J.~D’El{\'\i}a, ``{MPI} for python:
  Performance improvements and {MPI}-2 extensions,'' \emph{Journal of Parallel
  and Distributed Computing}, vol.~68, no.~5, pp. 655--662, 2008.

\bibitem{lecun1998gradient}
Y.~LeCun, L.~Bottou, Y.~Bengio, and P.~Haffner, ``Gradient-based learning
  applied to document recognition,'' \emph{Proceedings of the IEEE}, vol.~86,
  no.~11, pp. 2278--2324, 1998.

\bibitem{Sun17}
T.~Sun, R.~Hannah, and W.~Yin, ``Asynchronous coordinate descent under more
  realistic assumption,'' in \emph{Proceedings of the 31st International
  Conference on Neural Information Processing Systems}, 2017, pp. 6183--6191.

\bibitem{zeng2018nonconvex}
J.~Zeng and W.~Yin, ``On nonconvex decentralized gradient descent,'' \emph{IEEE
  Transactions on signal processing}, vol.~66, no.~11, pp. 2834--2848, 2018.

\bibitem{pu2020asymptotic}
S.~Pu, A.~Olshevsky, and I.~C. Paschalidis, ``Asymptotic network independence
  in distributed stochastic optimization for machine learning: Examining
  distributed and centralized stochastic gradient descent,'' \emph{IEEE signal
  processing magazine}, vol.~37, no.~3, pp. 114--122, 2020.

\bibitem{wu23CDC}
X.~Wu, C.~Liu, S.~Magnusson, and M.~Johansson, ``Delay-agnostic asynchronous
  distributed optimization,'' \emph{\emph{accepted to} IEEE Conference on
  Decision and Control (CDC)}, 2023.

\bibitem{giselsson2013accelerated}
P.~Giselsson, M.~D. Doan, T.~Keviczky, B.~De~Schutter, and A.~Rantzer,
  ``Accelerated gradient methods and dual decomposition in distributed model
  predictive control,'' \emph{Automatica}, vol.~49, no.~3, pp. 829--833, 2013.

\bibitem{Vapnik}
V.~Vapnik, ``An overview of statistical learning theory,'' \emph{IEEE
  Transactions on Neural Networks}, vol.~10, no.~5, pp. 988--999, 1999.

\bibitem{nedic2009distributed}
A.~Nedic and A.~Ozdaglar, ``Distributed subgradient methods for multi-agent
  optimization,'' \emph{IEEE Transactions on Automatic Control}, vol.~54,
  no.~1, pp. 48--61, 2009.

\bibitem{nedic17}
A.~Nedi\'{c}, A.~Olshevsky, and W.~Shi, ``Achieving geometric convergence for
  distributed optimization over time-varying graphs,'' \emph{SIAM Journal on
  Optimization}, vol.~27, no.~4, pp. 2597--2633, 2017.

\bibitem{liu2022decentralized}
C.~Liu, Z.~Zhou, J.~Pei, Y.~Zhang, and Y.~Shi, ``Decentralized composite
  optimization in stochastic networks: A dual averaging approach with linear
  convergence,'' \emph{\emph{accepted to} IEEE Transactions on Automatic
  Control}, 2022.

\bibitem{shi2015proximal}
W.~Shi, Q.~Ling, G.~Wu, and W.~Yin, ``A proximal gradient algorithm for
  decentralized composite optimization,'' \emph{IEEE Transactions on Signal
  Processing}, vol.~63, no.~22, pp. 6013--6023, 2015.

\bibitem{shi2015extra}
------, ``{EXTRA}: An exact first-order algorithm for decentralized consensus
  optimization,'' \emph{SIAM Journal on Optimization}, vol.~25, no.~2, pp.
  944--966, 2015.

\bibitem{bauschke2011convex}
H.~H. Bauschke, P.~L. Combettes \emph{et~al.}, \emph{Convex analysis and
  monotone operator theory in Hilbert spaces (second edition)}.\hskip 1em plus
  0.5em minus 0.4em\relax Springer, 2017.

\bibitem{Li23}
Y.~Li, P.~G. Voulgaris, D.~M. Stipanović, and N.~M. Freris, ``Communication
  efficient curvature aided primal-dual algorithms for decentralized
  optimization,'' \emph{IEEE Transactions on Automatic Control}, vol.~68,
  no.~11, pp. 6573--6588, 2023.

\bibitem{choi2023convergence}
W.~Choi and J.~Kim, ``On the convergence analysis of the decentralized
  projected gradient descent,'' \emph{arXiv preprint arXiv:2303.08412}, 2023.

\bibitem{zhang2021penalty}
J.~Zhang, H.~Liu, A.~M.-C. So, and Q.~Ling, ``A penalty alternating direction
  method of multipliers for convex composite optimization over decentralized
  networks,'' \emph{IEEE Transactions on Signal Processing}, vol.~69, pp.
  4282--4295, 2021.

\bibitem{bertsekas2015parallel}
D.~P. Bertsekas and J.~Tsitsiklis, \emph{Parallel and distributed computation:
  numerical methods}.\hskip 1em plus 0.5em minus 0.4em\relax Athena Scientific,
  2015.

\bibitem{pmlr-v80-zhou18b}
Z.~Zhou, P.~Mertikopoulos, N.~Bambos, P.~Glynn, Y.~Ye, L.-J. Li, and
  L.~Fei-Fei, ``Distributed asynchronous optimization with unbounded delays:
  How slow can you go?'' in \emph{Proceedings of the 35th International
  Conference on Machine Learning}, vol.~80.\hskip 1em plus 0.5em minus
  0.4em\relax PMLR, 10--15 Jul 2018, pp. 5970--5979.

\bibitem{Feyzmahdavian23}
H.~R. Feyzmahdavian and M.~Johansson, ``Asynchronous iterations in
  optimization: New sequence results and sharper algorithmic guarantees.''
  \emph{J. Mach. Learn. Res.}, vol.~24, pp. 158--1, 2023.

\bibitem{ATC_first}
F.~S. Cattivelli and A.~H. Sayed, ``Diffusion lms strategies for distributed
  estimation,'' \emph{IEEE transactions on signal processing}, vol.~58, no.~3,
  pp. 1035--1048, 2009.

\bibitem{zhao2015asynchronous}
X.~Zhao and A.~H. Sayed, ``Asynchronous adaptation and learning over
  networks—part i: Modeling and stability analysis,'' \emph{IEEE Transactions
  on Signal Processing}, vol.~63, no.~4, pp. 811--826, 2015.

\bibitem{zhao2015asynchronousII}
------, ``Asynchronous adaptation and learning over networks—part ii:
  Performance analysis,'' \emph{IEEE Transactions on Signal Processing},
  vol.~4, no.~63, pp. 827--842, 2015.

\bibitem{Dua:2019}
\BIBentryALTinterwordspacing
D.~Dua and C.~Graff, ``{UCI} machine learning repository,'' 2017. [Online].
  Available: \url{http://archive.ics.uci.edu/ml}
\BIBentrySTDinterwordspacing

\bibitem{xin2019distributed}
R.~Xin and U.~A. Khan, ``Distributed heavy-ball: A generalization and
  acceleration of first-order methods with gradient tracking,'' \emph{IEEE
  Transactions on Automatic Control}, vol.~65, no.~6, pp. 2627--2633, 2019.

\bibitem{qureshi2021s}
M.~I. Qureshi, R.~Xin, S.~Kar, and U.~A. Khan, ``S-addopt: Decentralized
  stochastic first-order optimization over directed graphs,'' \emph{IEEE
  Control Systems Letters}, vol.~5, no.~3, 2021.

\bibitem{diaconis1991geometric}
P.~Diaconis and D.~Stroock, ``Geometric bounds for eigenvalues of markov
  chains,'' \emph{The annals of applied probability}, pp. 36--61, 1991.

\bibitem{nesterov2003introductory}
Y.~Nesterov, \emph{Introductory lectures on convex optimization: A basic
  course}.\hskip 1em plus 0.5em minus 0.4em\relax Springer Science \& Business
  Media, 2003, vol.~87.

\bibitem{bertsekas1995nonlinear}
D.~P. Bertsekas \emph{et~al.}, ``Nonlinear programming,'' 1995.

\end{thebibliography}

\begin{IEEEbiography}[{\includegraphics[width=1.05in,height=1.25in,clip]{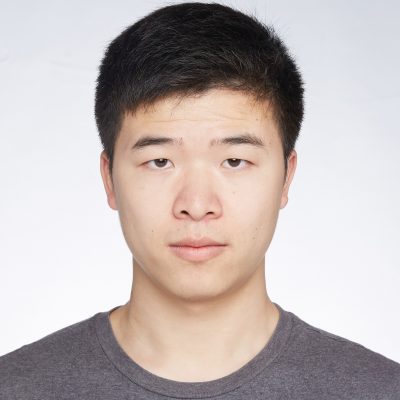}}]{Xuyang Wu} received the B.S. degree in Information and Computing Science from Northwestern Polytechnical University, Xi'an, China in 2015, and the Ph.D. degree from the University of Chinese Academy of Sciences, China in 2020. He is currently a postdoctoral researcher in the Division of Decision and Control Systems at KTH Royal Institute of Technology, Stockholm, Sweden. His research interests include distributed/parallel optimization and machine learning.
\end{IEEEbiography}

\begin{IEEEbiography}[{\includegraphics[width=1.05in,height=1.25in,clip]{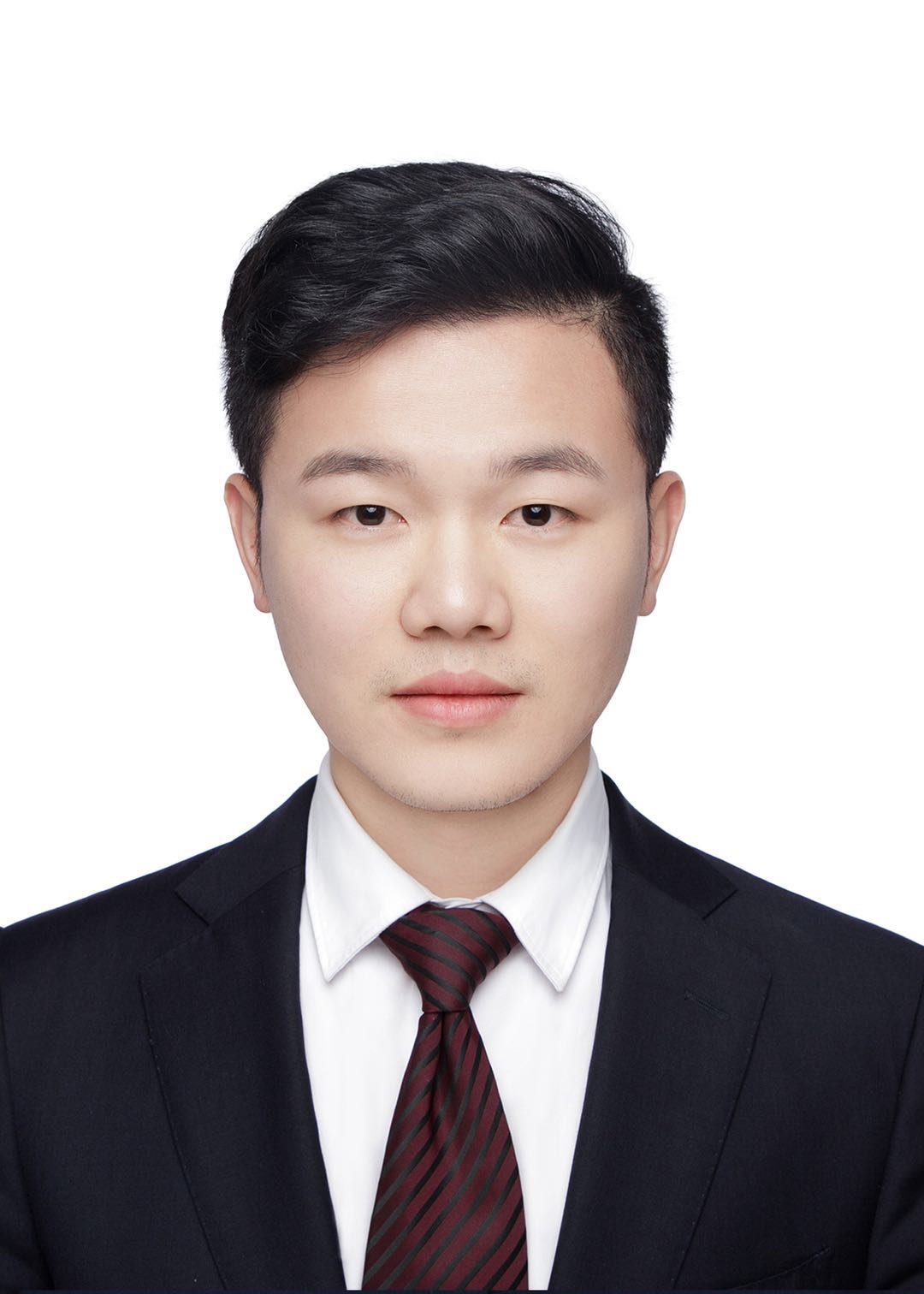}}]{Changxin Liu} received the Ph.D. degree in mechanical engineering from the University of Victoria, Victoria, BC, Canada, in 2021. He is currently a Postdoctoral Researcher with the School of Electrical Engineering and Computer Science, KTH Royal Institute of Technology, Stockholm, Sweden. His research interests revolve around the control, optimization, and learning of networked systems, with a specific focus on their applications in cyber-physical systems and robotics.

Dr. Liu was a recipient of the NSERC Postdoctoral Fellowship in 2023. He currently serves as Associate Editor for Circuits, Systems, and Signal Processing.
\end{IEEEbiography}

\begin{IEEEbiography}[{\includegraphics[width=1.05in,height=1.25in,clip]{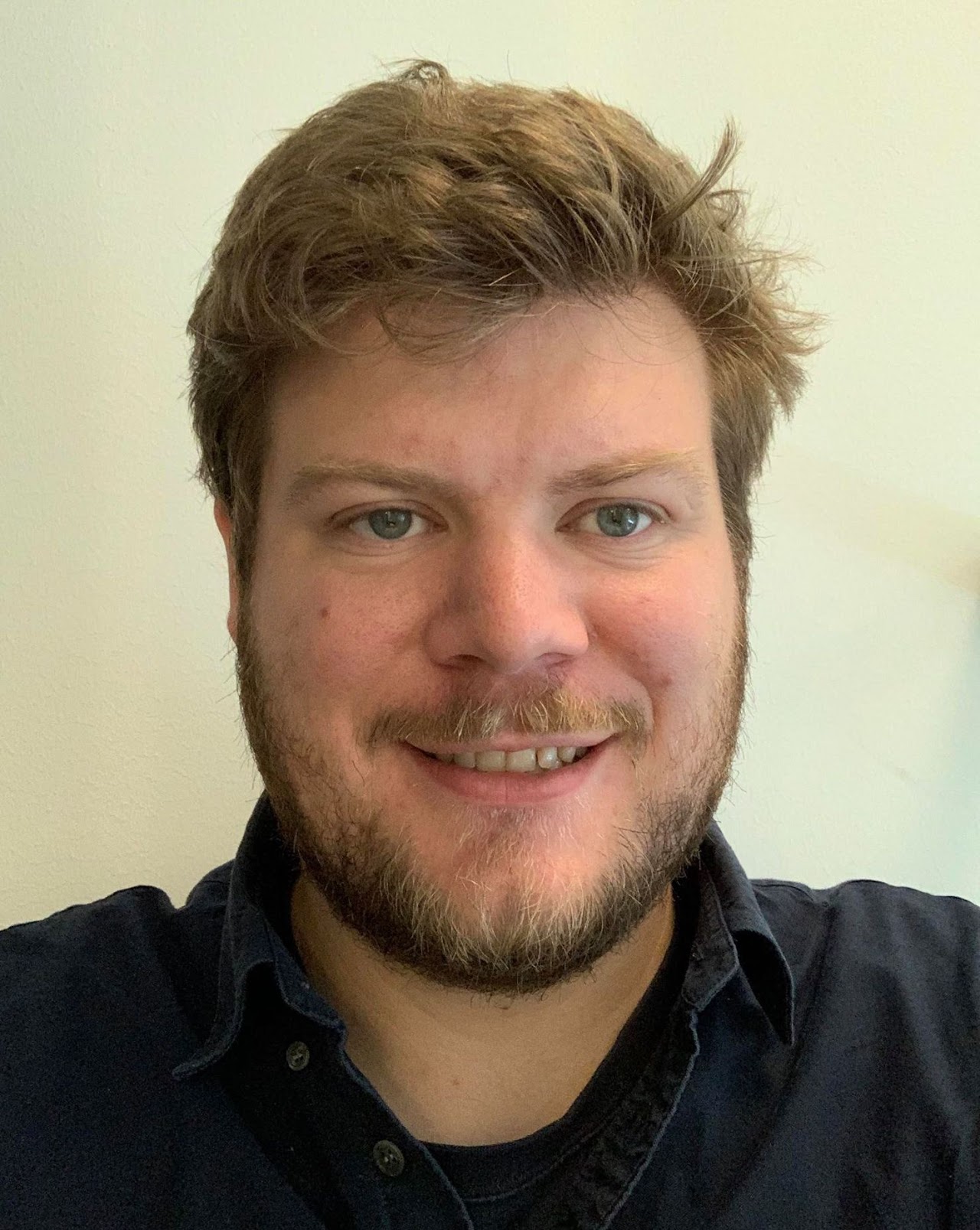}}]{Sindri Magn\'{u}sson}
 is an Associate Professor at the Department of Computer and Systems Science, Stockholm University, Sweden. He received the B.Sc. degree in Mathematics from University of Iceland, Reykjav\'{i}k Iceland, in 2011, the Masters degree in Applied Mathematics (Optimization and Systems Theory) from KTH Royal Institute of Technology, Stockholm Sweden, in 2013, and the PhD in Electrical Engineering from the same institution, in 2017. He was a postdoctoral researcher 2018-2019 at Harvard University, Cambridge, MA and a visiting PhD student at Harvard University for 9 months in 2015 and 2016. His research interests include distributed optimization, machine learning and data-driven decision-making both theory and applications in complex networks. He currently serves as an Associate Editor for IEEE/ACM Transactions on Networking. He has received a number of awards, including the best student paper award (as a supervisor) at the IEEE International Conference on Acoustics, Speech, \& Signal Processing (ICASSP) and the Swedish Research Council (VR) starting grant within natural and engineering sciences.
\end{IEEEbiography}

\begin{IEEEbiography}[{\includegraphics[width=1.05in,height=1.25in,clip]{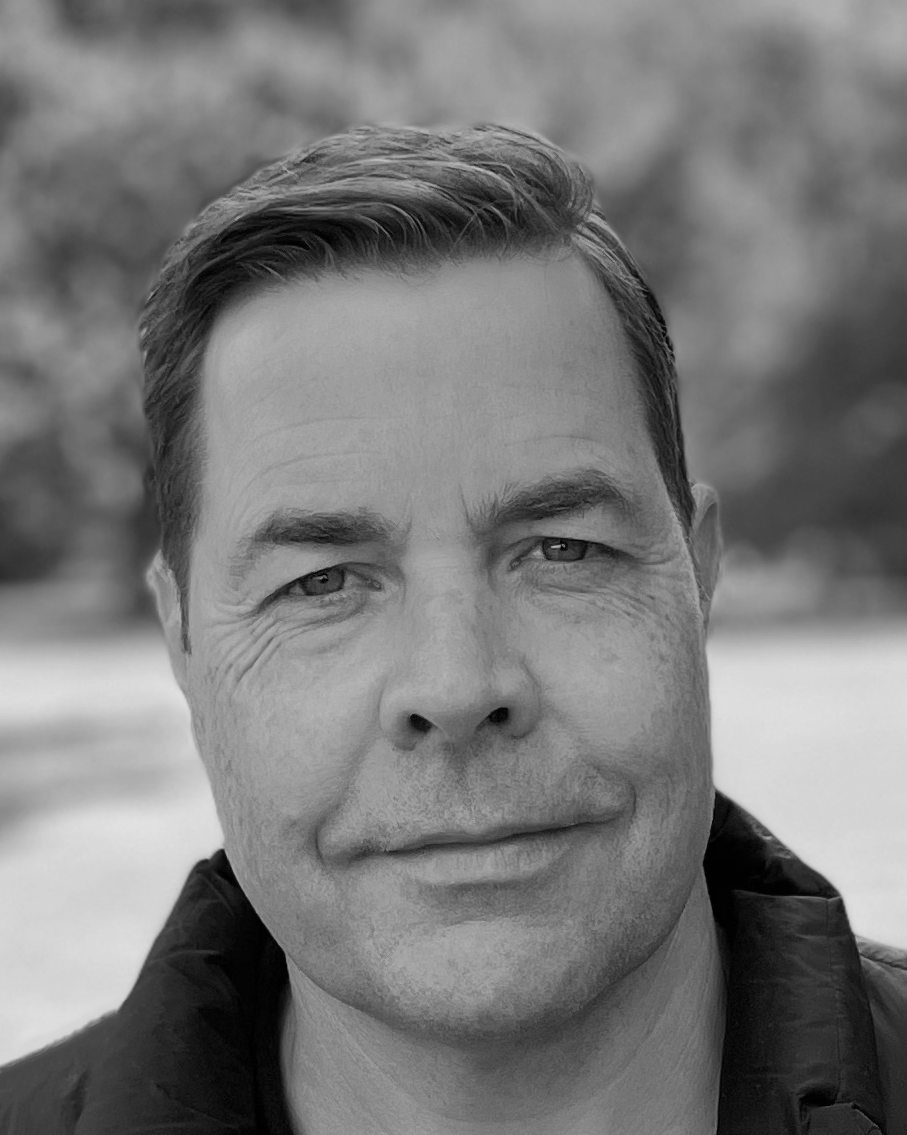}}]{Mikael Johansson} received the M.Sc. and Ph.D. degrees in electrical engineering from
Lund University, Sweden, in 1994 and
1999, respectively. He held postdoctoral positions with
Stanford University and 
the University of California at Berkeley, before joining
the KTH Royal Institute of Technology,
Stockholm, Sweden, in 2002, where he is currently a Full Professor.
He has played a leading role in several national and international
research projects on optimization, control, and communications.
He has coauthored two books and over 200 papers, several of
which are highly cited and have received recognition in terms
of best paper awards. His research revolves around
large-scale and distributed optimization, autonomous decision making, control, and machine learning. Dr. Johansson has served on the Editorial Boards of Automatica
and the IEEE Transactions on Network Systems
and on the program committee for several top conferences in control, communication, and machine learning. 
\end{IEEEbiography}
\end{document}